\definecolor{darkblue}{HTML}{004C93} 
\definecolor{MainRed}{rgb}{.6, .1, .1}
\numberwithin{equation}{section}
\newcommand{\R}{\mathbb{R}}
\newcommand{\ve}{\varepsilon}
\newcommand{\vp}{\varphi}
\newcommand{\Om}{\Omega}
\newcommand{\bal}{\begin{aligned}}
	\newcommand{\eal}{\end{aligned}}
\newcommand{\ben}{\begin{equation}}
	\newcommand{\een}{\end{equation}}
\newcommand{\be}{\begin{equation*}}
	\newcommand{\ee}{\end{equation*}}
\newcommand{\ua}{u_\alpha}
\newcommand{\ma}{\mu_\alpha}
\def \rr {\mathbb{R}}
\def \ep{\epsilon}
\def \bb{\hbox}
\def \vv {\mathbb{\Vert}}
\def \xl{x_\alpha}
\def \ls{+\infty}
\def\la{\alpha}
\def \vb{\bar{v}_{\alpha}}
\def \ma{\mu_{\alpha}}
\def \tg{\tilde{G}}
\def \yl{y_{\alpha}}
\def \tva{\tilde{v}_{\alpha}}
\def \hva{\hat{v}_{\alpha}}
\def \Ru{R_{\nu}}
\def \va{v_{\alpha}}
\def \ha{h_{\alpha}}
\def \hi{h_{\infty}}
\def \tha{\tilde{h}_{\alpha}}
\def \na {\nu_{\alpha}}
\def \vo {v_{\infty}}
\def \nn{\mathbb{N}}
\def \xin{x_{\infty}}
\def \r {\rangle}
\def \l{\langle}
\def \bsa{B_{\delta\sqrt{\ma}}}
\def \hha{\hat{h}_{\alpha}}
\def \sa {\sqrt{\ma}}
\def \ra{r_{\alpha}}
\newtheorem{step}{Step}
\def \h{\mathcal{H}}
\def \th{\widetilde{\mathcal{H}}}
\def \da{d_{\alpha}}
\def \lal{\lim_{\alpha\to +\infty}}
\def \hb{\bar{h}_{\alpha}}
\def \vbi{\bar{v}_{\infty}}
\def \ro{\rho}
\DeclareMathOperator{\loc}{loc}
\theoremstyle{plain}
\newtheorem{theo}{Theorem}[section]
\newtheorem{prop}{Proposition}[section]
\newtheorem{lemme}{Lemma}[section]
\newtheorem{corol}{Corollary}[section]
\theoremstyle{remark}
\newtheorem{rem}{Remark}[section]
\title[]{Compactness results for Sign-Changing Solutions of critical nonlinear elliptic equations of low energy }
\author{Hussein Cheikh Ali}
\author{Bruno Premoselli}
\address{Universit\'e Libre de Bruxelles, Service d'analyse, CP 218, Boulevard du Triomphe, B-1050 Bruxelles, Belgique.}
\email{hussein.cheikh-ali@ulb.be, bruno.premoselli@ulb.be}
\thanks{The second author was supported by the Fonds Th\'elam, an ARC Avanc\'e 2020 grant and an EoS FNRS grant} 
\date{December 1, 2024}
\begin{document}
	\begin{abstract}
Let $\Om$ be a bounded, smooth connected open domain in $\rr^n$ with $n\geq 3$. We investigate in this paper compactness properties for the set of sign-changing solutions $v \in H^1_0(\Om)$ of
		\begin{equation}\label{eqabs} \tag{*}
			\left\{\begin{aligned}
						-\Delta v+h v& =\left|v\right|^{2^*-2}v &\hbox{ in } \Omega, \\
			v& = 0  &\hbox{ on } \partial \Omega
		\end{aligned}\right.
	\end{equation}
where $h\in C^1(\overline{\Om})$ and $2^*:=2n/(n-2)$. Our main result establishes that the set of \emph{sign-changing} solutions of \eqref{eqabs} at the lowest sign-changing energy level is unconditionally compact in $C^2(\overline{\Om})$ when $3 \le n \le 5$, and is compact in $C^2(\overline{\Om})$ when $n \ge 7$ provided $h$ never vanishes in $\overline{\Om}$. In dimensions $n \ge 7$ our results apply when $h >0$ in $\overline{\Om}$ and thus complement the compactness result of \cite{DevillanovaSolimini}. Our proof is based on a new, global pointwise description of blowing-up sequences of solutions of \eqref{eqabs} that holds up to the boundary. We also prove more general compactness results under perturbations of $h$.
	\end{abstract}
	\maketitle
	\section{Introduction}
		
\subsection{Statement of the results}

Let $\Om\subset \rr^n$ be a smooth bounded connected open set in $\rr^n$, $n\geq 3$, $h \in C^1(\overline{\Om})$ and  $2^*:=2n/(n-2)$. In this paper we investigate solutions $v \in H^1_0(\Om)$ of 
		\begin{equation}\label{BN}
			\left\{\begin{aligned}
						-\Delta v+h v& =\left|v\right|^{2^*-2}v &\hbox{ in } \Omega, \\
			v& = 0  &\hbox{ on } \partial \Omega.
		\end{aligned}\right.
	\end{equation}
	Here and in the sequel, we let $\vv \cdot \vv_p$ be the usual norm of $L^p(\Om)$ for $1\leq p\leq \infty$, and $H_0^1(\Om)$ be the completion of $C_c^{\infty}(\Om)$ with respect to the norm
	 $$\vv v \vv_{H_0^1}^2:=\int_{\Om}|\nabla v|^2\, dx.$$
For simplicity we will assume throughout this paper that $- \Delta + h$ is coercive, that is, that there exists $C>0$ such that 
		 \begin{equation*}
		 	\int_{\Om} \left(|\nabla v|^2+h v^2 \right) \, dx \geq C \int_{\Om} |\nabla v|^2\, dx \bb{ for all } v\in H_0^1(\Om).
		 \end{equation*}
Under this assumption, the existence of positive solutions of \eqref{BN} is very well-understood. We let 
	\begin{equation}\label{Iho}
		I_{h}(\Om):=	\inf_{v\in H_0^1(\Om) \backslash \{0 \} }\frac{\int_\Om \left( |\nabla v|^2+h v^2\right)\, dx}{\left( \int_\Om |v|^{2^*}\, dx\right) ^{\frac{2}{2^*}}}.
	\end{equation}
Br\'ezis-Nirenberg \cite{BN} proved that when $n \ge 4$ positive ground states attaining \eqref{Iho} exist if and only $h<0$ somewhere in $\Om$. When $n=3$, Druet  \cite{Druetdim3} proved that positive ground states attaining \eqref{Iho} exist if only if $m_h >0$ somewhere in $\Om$, where $m_h$ is the so-called mass-function of the operator $-\Delta +h$. This function is defined as follows: let $G_h$ be the Green's function for $-\Delta +h$ with Dirichlet boundary conditions in $\Om$. Then, when $n=3$, we have
	\begin{equation*}
		G_{h}(x,y)=\frac{1}{4 \pi |x-y|}+g_{h}(x,y) \bb{ for all } y\in \Om\backslash \{x\}
	\end{equation*}
for some $g_h \in C^{0,1}(\overline{\Om}\times \overline{\Om})$, and we define $m_h(x) = g_{h}(x,x)$.
Under these assumptions, \cite{BN} and \cite{Druetdim3} also prove that we have $I_{h}(\Om) < K_n^{-2}$, where 
	\begin{equation}\label{defkn}
		K_n^{-2}:=\inf_{v\in C_c^{\infty}(\rr^n)\backslash\{0\}} \frac{\int_{\rr^n}|\nabla v|^2\, dx}{\left( \int_{\rr^n}| v|^{2^*}\, dx\right)^{\frac{2}{2^*}}} 
	\end{equation}
is the optimal constant in Sobolev's inequality in $\R^n$. An explicit expression of $K_n$ can be found in \cite{AubinYamabe, Talenti}. It is simple to see that if $v \in H^1_0(\Om)$ attains $I_h(\Om)$ then 
\begin{equation} \label{thresholdpositive}
\int_{\Om} |v|^{2^*} \, dx = I_h(\Om)^{\frac{n}{2}} < K_n^{-n}.
\end{equation}
The existence of sign-changing solutions for problem  \eqref{BN} has also attracted a lot of attention. Existence results for a general function $h \in C^1(\overline{\Om})$ are in \cite{BartschWeth}. When  $h \equiv - \lambda$, for $ \lambda \in (0, \lambda_1)$, equation \eqref{BN} is the so-called Br\'ezis-Nirenberg problem:
		\begin{equation}\label{BNlambda}
			\left\{\begin{aligned}
						-\Delta v - \lambda v& =\left|v\right|^{2^*-2}v &\hbox{ in } \Omega \\
			v& = 0  &\hbox{ on } \partial \Omega,
		\end{aligned}\right.
	\end{equation}
for which existence results have been obtained in \cite{CeramiFortunatoStruwe, CapozziFortunatoPalmieri, FortunatoJannelli, Solimini,  DevillanovaSolimini, ClappWeth, SchechterZou}. The existence of a sign-changing solution of least-energy (among all sign-changing solutions) for \eqref{BNlambda} when $\lambda \in (0, \lambda_1)$ -- the range in which $- \Delta - \lambda$ is coercive -- was proven in \cite{CeramiSoliminiStruwe} when $n \ge 6$ (see also \cite{ChenZou} for a new proof) while it was proven in \cite{RoselliWillem, TavaresYouZou} when $n=4,5$. The existence of least-energy sign-changing solutions for \eqref{BNlambda} is not yet known when $n=3$.

\medskip

In this paper we focus on compactness properties for solutions of \eqref{BN}. We let $(h_\alpha)_{\alpha \in \mathbb{N}}$ be a sequence of $C^1$ functions that converge to $h$ in $
C^1(\overline{\Om})$ and we let $(v_\alpha)_{\alpha \in \mathbb{N}}$ be a sequence of solutions in $H^1_0(\Om)$ of 
		\begin{equation}\label{BNalpha}
	\left\{\begin{aligned}
			-\Delta v_\alpha+h_\alpha v_\alpha& =\left|v_\alpha\right|^{2^*-2}v_\alpha&\hbox{ in } \Omega, \\
			v_\alpha& = 0  &\hbox{ on } \partial \Omega
		\end{aligned}\right.
	\end{equation}
	satisfying $\limsup_{\alpha \to + \infty} \Vert v_\alpha \Vert_{H^1_0} < + \infty$. We will say that $(v_\alpha)_\alpha$ is \emph{sign-changing} if, for any $\alpha$, $(v_\alpha)_+ = \max(v_\alpha,0)$ and $(v_\alpha)_- = - \min(v_\alpha,0)$ are both nonzero. We investigate under which assumptions on $h$ the sequence $(v_\alpha)_{\alpha \in \mathbb{N}}$ converges in a strong topology. Our main result answers this question when $(\va)_{\la \in \nn}$ has minimal energy: 
	\begin{theo}\label{maintheo2bis}
	Let $\Om$ be a smooth bounded connected domain of $\rr^n$, $n\geq 3$, and $(\ha)_{\alpha\in\nn}$ be a sequence that converges in $C^{1}(\overline{\Om})$ towards $h$. Assume that $-\Delta+h$ is coercive and that $I_{h}(\Om)<K_{n}^{-2}$. Let   $(\va)_{\alpha\in \nn}\in H_0^1(\Om)$ be a sequence of solutions of \eqref{BNalpha} such that
\begin{equation}\label{cond:energy1}
\limsup_{\alpha\to +\infty}\int_{\Om} |\va|^{2^*}\, dx\leq K_{n}^{-n}+I_{h}(\Om)^{\frac{n}{2}}
\end{equation}
and assume that one of the following assumptions is satisfied: 
		\begin{itemize}
		\item  either $n \in \{3,4,5\}$ and, for all $\alpha \ge 0$, $\va$ is sign-changing, or
		\item  $n \ge 7$ and $h \neq 0$ at every point in $\overline{\Om}$.
	\end{itemize}
Then, up to a subsequence, $(\va)_{\alpha\in \nn}$ strongly converge in $C^2(\overline{\Om})$ to a non-zero solution of \eqref{BN}.
\end{theo}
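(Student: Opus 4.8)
The plan is to argue by contradiction, combining the paper's global pointwise blow‑up description with an energy count and a Pohozaev identity. Since $\sup_\alpha\Vert\va\Vert_{H^1_0}<\infty$, I would first pass to a subsequence with $\va\rightharpoonup v_0$ weakly in $H^1_0(\Om)$, strongly in $L^p(\Om)$ for $p<2^*$ and a.e.; then $v_0\in H^1_0(\Om)$ solves \eqref{BN}. Since by elliptic regularity and bootstrap the convergence $\va\to v_0$ in $C^2(\overline\Om)$ is equivalent to strong convergence in $H^1_0(\Om)$, it suffices to prove the latter together with $v_0\neq0$. I would also record three elementary facts: (i) by coercivity of $-\Delta+h$ (hence of $-\Delta+\ha$ uniformly for $\alpha$ large) and Sobolev's inequality, there is $c_0>0$, independent of $\alpha$, with $\Vert\nabla v\Vert_2\geq c_0$ for every nontrivial solution $v$ of \eqref{BNalpha}; in particular $\Vert\va\Vert_{H^1_0}\geq c_0$, and likewise $\Vert\nabla(\va)_\pm\Vert_2\geq c_0$ whenever $\va$ is sign-changing, since $(\va)_\pm$ then satisfy $\int_\Om(|\nabla(\va)_\pm|^2+\ha(\va)_\pm^2)\,dx=\int_\Om(\va)_\pm^{2^*}\,dx$; (ii) plugging a nontrivial solution of \eqref{BN} into the definition \eqref{Iho} gives $\int_\Om|v_0|^{2^*}\,dx\geq I_h(\Om)^{n/2}$; (iii) $I_h(\Om)<K_n^{-2}$ gives $I_h(\Om)^{n/2}<K_n^{-n}$. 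Throughout I assume each $\va\not\equiv0$, which is automatic in the sign-changing case.

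Assume now $\va\not\to v_0$ in $H^1_0(\Om)$. I would invoke the paper's main structural result — the global pointwise description of blowing-up solutions of \eqref{BNalpha}, valid up to $\partial\Om$. It yields, up to a subsequence, an integer $N\geq1$, concentration points $x_\alpha^i\to x^i$ \emph{lying in the interior of $\Om$ at positive distance from $\partial\Om$} (excluding boundary concentration is precisely where the validity up to the boundary is used), scales $\mu_\alpha^i\to0$, and signs $\epsilon_i\in\{+1,-1\}$, so that
\[
\va=v_0+\sum_{i=1}^N\epsilon_i\,U_{\mu_\alpha^i,x_\alpha^i}+o(1)\quad\text{in }H^1_0(\Om),
\]
together with a sharp pointwise bound on the remainder; in particular, by the Brezis--Lieb lemma, $\lim_\alpha\int_\Om|\va|^{2^*}\,dx=\int_\Om|v_0|^{2^*}\,dx+N K_n^{-n}$. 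Combining this with \eqref{cond:energy1}, fact (ii) and fact (iii), I would conclude that $N=1$ in every case, and that if $v_0\neq0$ then $\int_\Om|v_0|^{2^*}\,dx=I_h(\Om)^{n/2}$, so $v_0$ attains $I_h(\Om)$; hence $v_0$ is a ground state and, by the strong maximum principle, has constant sign — replacing $\va$ by $-\va$ if needed, $v_0>0$ in $\Om$, so $v_0(x^1)>0$.

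It then remains to rule out the single-bubble picture $\va=v_0+\epsilon_1 U_{\mu_\alpha^1,x_\alpha^1}+o(1)$. If $v_0=0$ — which can only occur for $n\in\{3,4,5\}$, where $\va$ is sign-changing — I would write $\va=\epsilon_1 B_\alpha+w_\alpha$ with $B_\alpha:=U_{\mu_\alpha^1,x_\alpha^1}\geq0$, $\Vert w_\alpha\Vert_{2^*}\to0$, and, say, $\epsilon_1=+1$ (the case $\epsilon_1=-1$ being symmetric upon swapping $(\va)_+$ and $(\va)_-$): then $\va\geq w_\alpha$, so $(\va)_-\leq|w_\alpha|$, whence $\Vert(\va)_-\Vert_{2^*}\to0$; the identity satisfied by $(\va)_-$ together with Hölder's inequality then forces $\Vert\nabla(\va)_-\Vert_2\to0$, contradicting fact (i). So $v_0\neq0$ and $v_0>0$ is a ground state. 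To eliminate this case I would apply the Pohozaev identity for $-\Delta+h$ on a small fixed ball $B_r(x^1)\subset\subset\Om$, subtract the exact Pohozaev identity for $v_0$, and insert the sharp bubble asymptotics into the boundary integrals over $\partial B_r(x^1)$; the leading-order balance should take the schematic form
\[
b_n\,h(x^1)\,(\mu_\alpha^1)^{2}(1+o(1))+\epsilon_1\,a_n\,v_0(x^1)\,(\mu_\alpha^1)^{\frac{n-2}{2}}(1+o(1))=O\!\big((\mu_\alpha^1)^{\,n-2}\big),
\]
with $b_n=\tfrac{n-2}{2}\int_{\rr^n}U^2\,dx>0$ for $n\geq5$ and $a_n\neq0$ (for $n=3,4$ the first term is absent, resp.\ carries a logarithmic factor, which is immaterial here). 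For $n\geq7$ one has $2<\tfrac{n-2}{2}$ and $2<n-2$, so dividing by $(\mu_\alpha^1)^2$ and letting $\alpha\to\infty$ gives $h(x^1)=0$, against $h\neq0$ on $\overline\Om$; for $n\in\{3,4,5\}$ one has $\tfrac{n-2}{2}<2$ and $\tfrac{n-2}{2}<n-2$, so dividing by $(\mu_\alpha^1)^{(n-2)/2}$ gives $v_0(x^1)=0$, against $v_0>0$. (Dimension $6$ is excluded precisely because there $\tfrac{n-2}{2}=2$, so the two leading terms may compensate.) This contradiction yields $\va\to v_0$ in $H^1_0(\Om)$, hence in $C^2(\overline\Om)$.

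Finally $v_0\neq0$: otherwise $\va\to0$ in $C^2(\overline\Om)$, contradicting fact (i) (recall $\va\not\equiv0$); when $\va$ is sign-changing this moreover shows $v_0$ is itself sign-changing. The step I expect to be the real obstacle — apart from the quoted structural result, which both eliminates boundary bubbles and must supply boundary asymptotics sharp enough to feed the Pohozaev identity — is the dimension-by-dimension leading-order analysis of that identity: singling out the dominant term among the bubble self-interaction $\sim h(x^1)(\mu_\alpha^1)^2$, the bubble--background interaction $\sim v_0(x^1)(\mu_\alpha^1)^{(n-2)/2}$ and the $\sim(\mu_\alpha^1)^{n-2}$ boundary contributions, verifying that the relevant constants do not vanish, and thereby isolating exactly which dimensions ($n\neq6$ among those treated) yield a contradiction.
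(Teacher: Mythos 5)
There is a genuine gap, and it is exactly at the point you yourself flag as the crux: you claim that the paper's structural pointwise result yields concentration points $x_\alpha^i\to x^i$ \emph{lying in the interior of $\Om$ at positive distance from $\partial\Om$}, attributing the exclusion of boundary concentration to the fact that the description holds up to $\partial\Om$. This is not what the pointwise estimate gives. Theorem \ref{maintheo1} provides the bound $|v_\alpha-\Pi B_\alpha\mp v_\infty|\le\ve_\alpha(B_\alpha+v_\infty)$ \emph{regardless} of whether $x_\infty\in\Om$ or $x_\infty\in\partial\Om$; it is an a priori estimate valid in both regimes, not a statement that $x_\infty$ stays interior. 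Ruling out $x_\infty\in\partial\Om$ is the principal difficulty of the paper and occupies all of Section \ref{secpohozaev} apart from Proposition \ref{prop:blowup:interieur}. It requires a genuinely different Pohozaev analysis: rescalings at the two scales $d_\alpha=d(x_\alpha,\partial\Om)$ and $r_\alpha=\sqrt{\mu_\alpha}\,d_\alpha^{-1/(n-2)}$, a half-space limiting problem $-\Delta\bar v_\infty=0$ in $\Om_0=(-\infty,1)\times\R^{n-1}$ with a nonremovable singularity at $0$, B\^ocher's theorem to extract the leading term, the explicit sign condition $\mathcal H(0)<0$ and $\partial_1\mathcal H(0)<0$ on the regular part $\mathcal H$ of the half-space Green's function (Lemmas \ref{lem:CVbord} and \ref{h0negatif}), and a gradient Pohozaev identity when the sign of $h_\infty$ is unfavorable. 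None of this is subsumed by the $C^0$-estimate, and as the Remarks \ref{concentration:au:bord}--\ref{concentration:au:bord2} point out, boundary concentration does genuinely occur for sign-changing solutions when $h_\infty\equiv0$ (e.g. \cite{vaira2015new}), so it cannot be excluded without using the hypotheses of the theorem at this stage.

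Apart from this, the architecture of your argument matches the paper's for the interior case: the Brezis--Lieb energy count forcing a single bubble, the identification of $v_0$ as a positive ground state when $3\le n\le5$ (your direct argument via $(v_\alpha)_-$ and coercivity is a valid alternative to the paper's adaptation of \cite[Lemma 3.1]{CeramiSoliminiStruwe}), and the dimension-by-dimension Pohozaev balance between the $h(x^1)\mu_\alpha^2$ term and the $v_0(x^1)\mu_\alpha^{(n-2)/2}$ term are all essentially Proposition \ref{prop:blowup:interieur}. The one caution there is that the paper does not run the Pohozaev identity on a fixed ball $B_r(x^1)$ but on $B_{\delta\sqrt{\mu_\alpha}}(x_\alpha)$ when $v_\infty>0$ (so that the bubble and $v_\infty$ are comparable on $\partial U_\alpha$), precisely to make the coefficient of $v_0(x^1)\mu_\alpha^{(n-2)/2}$ explicit and nonvanishing; on a fixed ball you would have to verify the nonvanishing of your constant $a_n$ separately. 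To repair the proof you would need to supply, in addition, the full boundary-scale Pohozaev analysis of Propositions \ref{prop:blowup:bord}--\ref{prop:blowup:bord3}, which is where the up-to-the-boundary pointwise description actually earns its keep.
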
 
Recall that $I_{h}(\Om)$ is defined in \eqref{Iho}. In the particular case where $h_\alpha \equiv h$, Theorem \ref{maintheo2bis} implies the following compactness result for solutions of \eqref{BN}: 

\begin{corol} \label{maincorol}
Let $\Om$ be a smooth bounded connected domain of $\rr^n$, $n\geq 3$, and let $h \in C^{1}(\overline{\Om})$ be such that $-\Delta+h$ is coercive and $I_{h}(\Om)<K_{n}^{-2}$.
\begin{itemize}
\item Assume that $n \in \{3,4,5\}$. There exists $\ve = \ve(n,\Om)>0$ such that the set of \emph{sign-changing} solutions $v$ of \eqref{BN} satisfying 
$$\int_\Om |v|^{2^*}\, dx \le K_n^{-n} + I_h(\Om)^{\frac{n}{2}}  + \ve $$
is precompact in the $C^2(\overline{\Om})$-topology.
\item Assume that $n \ge 7$ and $h \neq 0$ in $\overline{\Om}$. There exists $\ve = \ve(n,h,\Om) >0$ such that the set of solutions $v$ of \eqref{BN} satisfying 
$$\int_\Om |v|^{2^*}\, dx \le K_n^{-n} + I_h(\Om)^{\frac{n}{2}} + \ve $$
is precompact in the $C^2(\overline{\Om})$-topology.
\end{itemize}
\end{corol}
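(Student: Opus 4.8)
The plan is to derive Corollary~\ref{maincorol} from Theorem~\ref{maintheo2bis} by a standard contradiction-and-compactness argument, treating the constant sequence $h_\alpha \equiv h$. Suppose the conclusion fails: then for every $\ve = 1/\alpha$ (with $\alpha \in \nn$) there is a solution $v_\alpha$ of \eqref{BN} with $h_\alpha \equiv h$, satisfying $\int_\Om |v_\alpha|^{2^*}\, dx \le K_n^{-n} + I_h(\Om)^{n/2} + 1/\alpha$, and moreover — in the dimension range $n \in \{3,4,5\}$ — each $v_\alpha$ sign-changing, such that no subsequence of $(v_\alpha)_\alpha$ converges in $C^2(\overline{\Om})$. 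The first step is to check that such a sequence automatically satisfies the hypotheses of Theorem~\ref{maintheo2bis}: the energy bound \eqref{cond:energy1} holds since $\limsup_\alpha \int_\Om |v_\alpha|^{2^*}\, dx \le K_n^{-n}+I_h(\Om)^{n/2}$, and the uniform $H^1_0$-bound $\limsup_\alpha \Vert v_\alpha \Vert_{H^1_0} < +\infty$ follows from testing \eqref{BNalpha} against $v_\alpha$, using coercivity of $-\Delta + h$ and the $L^{2^*}$-bound: indeed $C\Vert v_\alpha\Vert_{H^1_0}^2 \le \int_\Om(|\nabla v_\alpha|^2 + h v_\alpha^2)\,dx = \int_\Om |v_\alpha|^{2^*}\,dx$, which is bounded by Sobolev.

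The second step is simply to invoke Theorem~\ref{maintheo2bis} with this sequence: in the case $n \in \{3,4,5\}$ we use the first bullet of the theorem (each $v_\alpha$ is sign-changing by assumption), while in the case $n \ge 7$ with $h \neq 0$ in $\overline{\Om}$ we use the second bullet (no sign condition on $v_\alpha$ is needed). In either case the theorem yields a subsequence of $(v_\alpha)_\alpha$ converging in $C^2(\overline{\Om})$ to a nonzero solution of \eqref{BN}, which directly contradicts the assumption that no subsequence converges in $C^2(\overline{\Om})$. This contradiction proves that the asserted $\ve > 0$ exists. The only slightly delicate point to state carefully is the \emph{uniformity} of $\ve$: a priori the theorem gives, for each diverging sequence, \emph{some} convergent subsequence, so one must phrase the contradiction at the level of sequences — if no single $\ve$ worked, one could extract exactly the kind of non-precompact sequence the theorem forbids.

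One should also record that the limiting solution produced is automatically in the relevant class: in the range $n \in \{3,4,5\}$ one might worry whether the $C^2$-limit $v_\infty$ of sign-changing solutions is itself sign-changing, but this is not required for precompactness of the set — precompactness only asserts that every sequence in the set has a subsequence converging in $C^2(\overline{\Om})$ to \emph{some} limit, and the limit need not lie in the set. (In fact, by the strong $C^2$-convergence and the fact that each $v_\alpha$ is a nonzero solution with both $(v_\alpha)_\pm \ne 0$, together with the lower energy bound forcing $(v_\infty)_\pm$ to each carry at least the energy of a nontrivial solution, one can check $v_\infty$ is sign-changing as well, but this is a bonus, not part of the statement.)

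The main — indeed essentially the only — obstacle here has already been overcome: it is Theorem~\ref{maintheo2bis} itself, whose proof rests on the new global pointwise blow-up analysis up to the boundary announced in the abstract. Given that theorem, Corollary~\ref{maincorol} is a soft consequence, and the proof above is complete up to the elementary verifications of the hypotheses indicated in the first paragraph. I would write it out in about half a page, spelling out the $H^1_0$-bound computation and the sequential phrasing of the contradiction, and nothing more.
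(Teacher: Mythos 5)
Your proof is correct and follows essentially the same route as the paper's: argue by contradiction, extract a blowing-up sequence of solutions satisfying the hypotheses of Theorem~\ref{maintheo2bis} with $h_\alpha \equiv h$, and invoke that theorem. The paper performs the extraction slightly more explicitly by first recording the observation, made at the start of the proof of Theorem~\ref{maintheo2bis}, that an $L^\infty$-bounded sequence of solutions is automatically precompact in $C^2(\overline{\Om})$ by elliptic regularity, so that failure of the conclusion directly yields a sequence with $\Vert v_\alpha \Vert_\infty \to +\infty$ and $\int_\Om |v_\alpha|^{2^*}\,dx \le K_n^{-n}+I_h(\Om)^{n/2}+o(1)$ --- which is precisely the ``sequential phrasing'' of the contradiction you allude to but leave slightly implicit.
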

The energy bound \eqref{cond:energy1} is very natural when investigating sign-changing solutions of \eqref{BN}. Solutions of \eqref{BNalpha} satisfying \eqref{cond:energy1} exist: the least-energy sign-changing solutions of \eqref{BNlambda} constructed in \cite{CeramiSoliminiStruwe, TavaresYouZou}, for instance, satisfy $\int_{\Om} |v|^{2^*} \, dx < K_{n}^{-n}+I_{-\lambda}(\Om)^{\frac{n}{2}}$.  A simple application of Struwe's \cite{Struwe} celebrated compactness result (see also \cite[Lemma 3.1]{CeramiSoliminiStruwe}) shows that if a sequence $(v_\alpha)_{\alpha \in \mathbb{N}}$ of solutions of \eqref{BNalpha} \emph{changes sign} and satisfies $\lim_{\alpha \to + \infty} \Vert v_\alpha \Vert_\infty = + \infty$ (we will say in this case that $(\va)_{\alpha \in \mathbb{N}}$ \emph{blows-up}), then 
$$ \int_{\Om}|\va|^{2^*}\, dx\geq K_{n}^{-n}+I_{h}(\Om)^{\frac{n}{2}} + o(1) $$
as $\alpha \to + \infty$. The threshold $K_{n}^{-n}+I_{h}(\Om)^{\frac{n}{2}}$ is therefore the direct counterpart, for sign-changing solutions, of the minimal energy threshold $K_n^{-n}$ that ensures the existence of positive ground state solutions in \eqref{thresholdpositive}. In this respect, Theorem \ref{maintheo2bis} and Corollary \ref{maincorol} have to be understood as the first compactness result for \eqref{BNalpha}, at the lowest energy-level for sign-changing blow-up, when $I_{h}(\Om)$ is attained.

\medskip

Theorem \ref{maintheo2bis} shows that when $3 \le n \le 5$ \emph{sign-changing} solutions are unconditionally compact in $C^2(\overline{\Om})$ under assumption \eqref{cond:energy1}. By contrast, without further assumptions on $h$, the set of \emph{positive} solutions satisfying \eqref{cond:energy1} is not compact in general when $3 \le n \le 5$. For equation \eqref{BNlambda}, for instance, families of positive solutions whose energy converges to $K_n^{-n}$ and which are not compact in $C^2(\overline{\Om})$  have been constructed in \cite{MussoPistoia2, Rey} when $n \ge 4$ and $\lambda \to 0+$, and in \cite{DelPinoDolbeaultMusso} when $n=3$ and $\lambda \to \lambda_*$ from above, where  $\lambda_*$ satisfies $\max_\Om m_{\lambda_*} = 0$. When $3 \le n \le 5$, Theorem \ref{maintheo2bis} is therefore unexpected since sign-changing solutions of equations like \eqref{BNalpha} are known to exhibit a much richer and more erratic behavior than positive ones. When $n \ge 7$, Theorem \ref{maintheo2bis} applies to positive and sign-changing sequences of solutions $(\va)_{\alpha \in \mathbb{N}}$ and Corollary \ref{maincorol} generalises the well-known compactness theorem for energy-bounded solutions of \eqref{BNlambda} proven in \cite{DevillanovaSolimini}. It is still an open question to know whether Theorem \ref{maintheo2bis} holds true for any energy-bounded sequence $(\va)_{\la \in \nn}$ without the assumption \eqref{cond:energy1} when $n \ge 7$ and $h \neq 0$ in $\overline{\Om}$.

\medskip

Dimension $6$ is excluded from Theorem \ref{maintheo2bis}. In this case we prove: 
\begin{prop} \label{prop:cas6}
Let $\Om$ be a smooth bounded domain of $\rr^6$ and $(\ha)_{\alpha\in\nn}$ be a sequence that converges in $C^{1}(\overline{\Om})$ towards $h$. Assume that $-\Delta+h$ is coercive and that $I_{h}(\Om)<K_{6}^{-2}$. Let   $(\va)_{\alpha\in \nn}\in H_0^1(\Om)$ be any sequence of solutions of \eqref{BNalpha} satisfying \eqref{cond:energy1} and assume that $\Vert v_\alpha \Vert_{\infty} \to +\infty$ as $ \alpha \to + \infty$. Then there exists $v_\infty \in H^1_0(\Om)$, $v_\infty >0$ in $\Om$, attaining $I_{h}(\Om)$ such that $v_\alpha$ converges weakly but not strongly to $ \pm v_\infty$ in $H^1_0(\Om)$ and there exists $x_\infty \in \Om$ such that 
$$h(x_\infty) = \pm   2v_\infty(x_\infty).$$ 
\end{prop}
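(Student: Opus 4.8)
The starting point is Struwe's decomposition: since $(v_\alpha)$ is energy-bounded and solves \eqref{BNalpha}, up to a subsequence we can write $v_\alpha = v_\infty + \sum_{i=1}^N B_i^\alpha + o(1)$ in $H^1_0(\Om)$, where $v_\infty$ solves \eqref{BN}, each $B_i^\alpha$ is a (rescaled, truncated) standard bubble $U_{\mu_i^\alpha, x_i^\alpha}$ concentrating at some $x_i \in \overline{\Om}$ with $\mu_i^\alpha \to 0$, and the energy splits additively. Since $\|v_\alpha\|_\infty \to \infty$ we have $N \ge 1$, and the energy bound \eqref{cond:energy1} reads $\int_\Om |v_\infty|^{2^*} + N K_n^{-n} + o(1) \le K_n^{-n} + I_h(\Om)^{n/2}$. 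Because $\int_\Om |v_\infty|^{2^*} \ge I_h(\Om)^{n/2}$ whenever $v_\infty \ne 0$ (as $v_\infty$ is a nonzero solution, hence a competitor in \eqref{Iho} after testing the equation), this forces $N = 1$ and $v_\infty \ne 0$ to attain $I_h(\Om)$; moreover all the energy inequalities are saturated, so $v_\infty$ attains $I_h(\Om)$ and, being a minimizer, does not change sign — say $v_\infty > 0$ in $\Om$ after replacing $v_\alpha$ by $-v_\alpha$ if needed. (One also argues the single bubble is interior, $x_\infty \in \Om$, using the boundary behavior; alternatively this may come from the dimension-$6$ analysis of the energy expansion below.) Thus $v_\alpha \rightharpoonup \pm v_\infty$ weakly but not strongly, which is the first assertion.

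The heart of the proposition is the pointwise identity $h(x_\infty) = \pm 2 v_\infty(x_\infty)$. The plan is to perform a refined blow-up analysis around the concentration point: set $\tilde v_\alpha(y) = \mu_\alpha^{(n-2)/2} v_\alpha(x_\alpha + \mu_\alpha y)$ with $(\mu_\alpha, x_\alpha)$ the concentration parameters of the single bubble; then $\tilde v_\alpha \to U$ in $C^2_{\loc}(\R^n)$, where $U$ is the standard Aubin–Talenti bubble. In dimension $n = 6$ one has $2^* - 2 = 1$, so the lower-order term $h_\alpha v_\alpha$ in \eqref{BNalpha} is of the \emph{same homogeneity} as $|v_\alpha|^{2^*-2}v_\alpha = |v_\alpha|v_\alpha$ under the bubble rescaling — this is precisely why $n=6$ is special. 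The next step is to write a Pohozaev-type identity for $v_\alpha$ on a small ball $B_\delta(x_\alpha)$: multiply \eqref{BNalpha} by $(y - x_\alpha)\cdot \nabla v_\alpha + \frac{n-2}{2} v_\alpha$ and integrate by parts. The boundary terms on $\partial B_\delta(x_\alpha)$ are controlled, as $\alpha \to \infty$, by the limit profile $v_\infty$ (using that $v_\alpha \to v_\infty$ in $C^2_{\loc}(\overline{\Om}\setminus\{x_\infty\})$ — this follows from $N=1$ and standard elliptic estimates away from the blow-up point). The interior term coming from $h_\alpha v_\alpha^2$ produces, after rescaling, a contribution of the form $c_n \int_{\R^6} U^2 \cdot h(x_\infty)$, while the term coming from $v_\infty$ near $x_\infty$ produces $c_n' v_\infty(x_\infty) \int_{\R^6} U^2$ or a similarly structured quantity; equating the two, with the sharp constants, yields $h(x_\infty) = \pm 2 v_\infty(x_\infty)$. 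Concretely, the expected mechanism is that the Pohozaev identity forces a cancellation in the $\mu_\alpha^2 \log(1/\mu_\alpha)$ (or $\mu_\alpha^2$) order of the energy expansion, whose coefficient is exactly a multiple of $h(x_\infty) \mp 2 v_\infty(x_\infty)$, because $\int_{\R^6} U^2 < \infty$ in dimension $6$ only marginally (the integral $\int_{B_R} U^2$ grows like $\log R$), which is what produces the clean factor $2$.

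The main obstacle is making the constant-tracking in the Pohozaev argument precise enough to extract the exact relation $h(x_\infty) = \pm 2v_\infty(x_\infty)$ rather than merely a proportionality $h(x_\infty) = c\, v_\infty(x_\infty)$ for some dimensional constant $c$. This requires: (i) a sharp asymptotic for $\int_{B_\delta(x_\alpha)} h_\alpha v_\alpha^2$, splitting $v_\alpha$ into its bubble part and its $v_\infty$ part and handling the logarithmically divergent cross/bubble integrals in $n=6$ with the correct normalization of $K_6$ and of the bubble $U$; (ii) a matching sharp asymptotic for the boundary integrals in the Pohozaev identity, which encode $v_\infty(x_\infty)$ through the interaction of the bubble with the regular part — essentially a local version of the "mass"-type computation, except here the relevant regular background is $v_\infty$ itself rather than a Green's function. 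A secondary technical point is justifying that the blow-up is simple with a single interior bubble and no boundary concentration; if the paper's earlier global pointwise description (the "new global pointwise description of blowing-up sequences … up to the boundary" advertised in the abstract) is available, it should be invoked here to rule out boundary blow-up and multi-bubble towers, after which the energy-threshold argument of the first paragraph pins down $N=1$.
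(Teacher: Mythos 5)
Your overall plan — Struwe decomposition, reduction to a single interior bubble, Pohozaev identity on a shrinking ball — matches the paper's route (which simply invokes Propositions~\ref{prop:blowup:bord}, \ref{prop:blowup:bord2} and \ref{prop:blowup:interieur}). However there are several substantive errors in the way you fill in the argument.

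First, your claim that the energy bound \eqref{cond:energy1} \emph{forces} $v_\infty\neq 0$ is false. The decomposition gives $\lim_\alpha\int_\Om|v_\alpha|^{2^*}=NK_n^{-n}+\int_\Om|v_\infty|^{2^*}$, and since $I_h(\Om)^{n/2}<K_n^{-n}$ the bound indeed forces $N=1$; but $N=1$, $v_\infty\equiv 0$ gives energy $K_n^{-n}$, which is strictly \emph{below} the threshold. The energy bound only tells you that $v_\infty$ is either $\equiv 0$ or a ground state (this is exactly what the paper records in Section~\ref{secprovetheo1}). Ruling out $v_\infty\equiv 0$ would require additional input (e.g. assuming $v_\alpha$ changes sign, which the paper does in Theorem~\ref{maintheo2bis} but explicitly does \emph{not} assume in Proposition~\ref{prop:cas6}).

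Second, your heuristic for the factor $2$ is dimensionally off. You attribute it to a logarithmic divergence of $\int_{B_R}U^2$; this divergence occurs in $n=4$, not $n=6$. In $n=6$ one has $\int_{\R^6}B_0^2\,dx<\infty$ unambiguously (the paper computes $\int_{\R^6}B_0^2=6^2 4^3\omega_5$). The true reason $n=6$ is singled out is that the Pohozaev boundary term at the intermediate scale contributes $\pm\frac{\omega_{n-1}}{2}n^{\frac{n-2}{2}}(n-2)^{\frac{n+2}{2}}v_\infty(x_\infty)\,\ma^{\frac{n-2}{2}}$ while the bulk term contributes $h(x_\infty)\big(\int_{\R^n}B_0^2\big)\ma^2$; these two powers of $\ma$ coincide precisely when $(n-2)/2=2$, i.e. $n=6$, and equating the two constants yields $h(x_\infty)=\pm 2v_\infty(x_\infty)$. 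The ``same homogeneity of $hv$ and $|v|v$ under rescaling'' observation is a red herring here: what matters is the balance $\ma^{(n-2)/2}=\ma^2$, not the linearity of $|v|^{2^*-2}v$.

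Third, the radius of your Pohozaev ball matters. You integrate over a fixed $B_\delta(x_\alpha)$; at that scale $\ma^{-(n-2)/2}v_\alpha$ converges to a multiple of the Dirichlet Green's function, and the boundary terms do not read off $v_\infty(x_\infty)$. The paper works on $B_{\delta\sqrt{\ma}}(x_\alpha)$: after rescaling by $\sqrt{\ma}$, Theorem~\ref{maintheo1} shows $\ma^{-(n-2)/2}v_\alpha(x_\alpha+\sqrt{\ma}\,\cdot)\to (n(n-2))^{(n-2)/2}|\cdot|^{2-n}\pm v_\infty(x_\infty)$, and \emph{this} is what feeds the constant $v_\infty(x_\infty)$ into the boundary term. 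With a fixed radius you would obtain a Green's-function mass instead, which is the wrong object in $n\geq 4$.

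Finally, you treat the exclusion of boundary concentration as a ``secondary technical point'' to be delegated. In fact this is the main technical novelty of the paper: it is carried out in Propositions~\ref{prop:blowup:bord}--\ref{prop:blowup:bord2} through Pohozaev identities at the scales $d_\alpha$ and $r_\alpha=\sqrt{\ma}/d_\alpha^{1/(n-2)}$, together with the up-to-the-boundary estimate \eqref{theorieC0} and the asymptotics of $\Pi B_\alpha$ near $\partial\Om$ from Lemma~\ref{lem:controleprojbulle}. Since the final claim ``$x_\infty\in\Om$'' is an essential part of the conclusion, a proof that defers this entirely is incomplete.
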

Compactness of \emph{sign-changing} solutions of \eqref{BNalpha} satisfying \eqref{cond:energy1} does not hold when $n=6$: in \cite{PistoiaVairadim6}, for instance, the authors constructed a non-compact family $(v_\lambda)_\lambda$ of sign-changing solutions of \eqref{BNlambda} which blows-up as $\lambda$ converges to some $\lambda_0 >0$ that satisfies $\lambda_0 = 2 \Vert v_{0} \Vert_{\infty}$, where $v_0$ attains $I_{- \lambda_0}(\Om)$ (the existence of such $(\lambda_0, v_0)$ is also proven in \cite{PistoiaVairadim6}). This six-dimensional phenomenon has been known for a while for positive solutions, where it was first highlighted in \cite{DruetYlowdim}.

\subsection{Strategy of proof and outline of the paper}

For \emph{positive} solutions there is a vast literature addressing the issue of compactness of equations like \eqref{BNalpha}  through blow-up analysis. On open sets of $\R^n$ with Dirichlet boundary conditions we mention for instance \cite{Druetdim3, DruetLaurain,LaurainKonig, LaurainKonig2} for \eqref{BN}, \cite{druet2012lin} for Lin-Ni type problems with Neumann boundary conditions and  \cite{GhoussoubMazumdarRobert} for singular Hardy-Sobolev type problems. On closed manifolds we mention \cite{DruetJDG} for compactness of energy-bounded solutions and  the series of works related to the compactness of the Yamabe equation: \cite{LiZhu, DruetJDG, Marques, KhuMaSc} (see also \cite{HebeyZLAM} for additional references). On manifolds with boundary we refer to \cite{mesmar2024concentration}. For \emph{sign-changing} solutions of critical elliptic equations on closed manifolds, compactness results have been recently obtained: we refer for instance to \cite{PremoselliVetois, PremoselliVetois2, PremoselliVetois3, PremoselliVetois4, PremoselliRobert}.  Concerning problem \eqref{BNlambda} in particular, there is a vast literature on the construction and the behavior of blowing-up solutions:  we mention for instance \cite{BAEMP1, BAEMP2, Druetdim3, DruetLaurain, LaurainKonig, LaurainKonig2, IacopettiPacella, IacopettiVaira2, MussoPistoia2, musso2024nodal, Premoselli12, vaira2015new} and the references therein.

\medskip

Our approach in this paper is strongly inspired from these references. We proceed by contradiction: under the assumptions (and with the notations) of Theorem \ref{maintheo2bis}, and by \cite{Struwe}, if $(v_\alpha)_{\alpha \in \mathbb{N}}$ does not strongly converge in $H^1_0(\Om)$ we have, up to a subsequence,
\begin{equation} \label{sumvlaintro}
	\va=B_{\alpha} \pm \vo+ o(1)  \bb{ in } H_0^1(\Om)
\end{equation}
as $\alpha \to +\infty$, where $\vo  \ge 0 $ solves \eqref{BN} and where $B_\alpha$ is a positive bubbling profile that concentrates at some point $x_\alpha \in \Om$ and is modeled on a positive solution of $-\Delta B = B^{2^*-1}$ in $\R^n$ (see \eqref{sumvla} below for more details). We perform an asymptotic analysis of $\va$ near $x_\alpha$ at different scales and obtain necessary conditions on $h$ for blow-up to occur. The contradiction follows from these conditions: to prove Theorem \ref{maintheo2bis} when $3 \le n \le 5$, for instance, we prove that if \eqref{sumvlaintro} holds we simultaneously have  $\vo \equiv 0$ and $\vo >0$ in $\Om$.  In order to investigate the behavior of $\va$ near $x_\alpha$ we prove in this paper new pointwise estimate on $\va$, up to the boundary, that improve \eqref{sumvlaintro} in strong spaces. We precisely prove that 
\begin{equation} \label{theorieC0intro}
 \left \Vert \frac{\va - \Pi B_\alpha \mp \vo}{B_\alpha + \vo} \right \Vert_{\infty} \to 0 
\end{equation}
as $\alpha \to + \infty$, where $\Pi B_\alpha$ is the projection of $B_\alpha$ in $H^1_0(\Om)$ defined by \eqref{projbulle} below (see Theorem \ref{maintheo1} below for a precise statement). Estimate \eqref{theorieC0intro} provides an accurate control on $\va$ up to $\partial \Om$ and is particularly useful close to $\partial \Om$, where, at first order,  $\Pi B_\alpha $ deviates from $B_\alpha$  and $v_\infty$ vanishes. To the best of our knowledge this is the first time that a similar estimate is proven. We heavily rely on estimate \eqref{theorieC0intro} to rule out the possibility that the concentration point $\xl$ converges to a point in $\partial \Om$: this is both the main difficulty that we face in the proof of Theorem  \ref{maintheo2bis} and the main novelty of our analysis, and is deeply related to the sign-changing nature of the solutions we consider (see Remarks \ref{concentration:au:bord} and \ref{concentration:au:bord2} below for a detailed explanation of this fact). 

\medskip

The structure of the paper is as follows. In Section \ref{secprovetheo1} we prove Theorem \ref{maintheo1} and establish \eqref{theorieC0intro}. In Section \ref{secpohozaev} we apply it to obtain necessary conditions for the blow-up of $(v_\alpha)_{\alpha \in \mathbb{N}}$ by means of suitable Pohozaev identities at different scales. We separately treat the interior blow-up case (Proposition \ref{prop:blowup:interieur}) and the boundary blow-up case (Propositions \ref{prop:blowup:bord}, \ref{prop:blowup:bord2} and \ref{prop:blowup:bord3}), and we deduce our main result, Theorem \ref{maintheo2bis}, from this analysis. Finally, Appendix \ref{annexe} contains the proof of a few technical results that are used throughout Section \ref{secpohozaev}.

\section{The $C^0$-theory for blow-up}\label{secprovetheo1}

In this section we let $h_{\infty}\in C^{0}(\overline{\Om})$ and consider a family of functions $(\ha)_{\alpha\in \nn}\in C^{1}(\overline{\Om})$ such that 
\begin{equation}\label{haconvhi}
	\lim_{\alpha\to +\infty}\ha =\hi \bb{ in } C^{0}(\overline{\Om}). 
\end{equation} 
We assume that $-\Delta+h_{\infty}$ is coercive in $H^1_0(\Om)$ and that $I_{\hi}(\Om)<K_n^{-2}$, where $I_{\hi}(\Om)$ is as in \eqref{Iho}, so that positive ground states of \eqref{BN} with $h = h_\infty$ exist. We consider a sequence of functions $(\va)_{\alpha\in \nn}$ in $H_0^1(\Om)$ such that, for all $\alpha\in \nn$, $v_{\alpha}$ is a solution to
\begin{equation}\label{critvlambda}
	\left\{\begin{aligned}
		-\Delta \va +h_{\alpha} \va &=\left|\va\right|^{2^*-2}\va \hbox{ in } \Omega, \\
		\va&= 0  \hbox{ in } \partial \Omega.
	\end{aligned}\right.
\end{equation}
We assume that
\begin{equation}\label{limvlaL2star}
	\limsup_{\alpha\to +\infty}\int_{\Om} |\va|^{2^*}\, dx \le K_n^{-n} +I_{\hi}(\Om)^{\frac{n}{2}}. 
\end{equation} 
We also assume that $(\va)_{\alpha \in \nn}$ blows-up, that is
\begin{equation}\label{limvlaLinfini}
	\lim_{\alpha\to +\infty} \vv \va\vv_{\infty}=+\infty.
\end{equation} 
By \eqref{limvlaL2star} and \eqref{limvlaLinfini}, and following \cite{Struwe} (see also \cite{StruweVariationalMethods}), we get that, up to a subsequence
\begin{equation}\label{sumvla}
	\va=B_{\alpha}\pm\vo+\vp_{\alpha} \bb{ in } H_0^1(\Om), 
\end{equation}
where $\vv \vp_{\alpha}\vv_{H_0^1}\to 0$ as $\alpha \to  +\infty$. In \eqref{sumvla} $v_\infty$ is a solution of \eqref{BN} with $h = h_\infty$ and we have let 
\begin{equation}\label{Bla}
	B_{\alpha}(x):=\ma^{-\frac{n-2}{2}}B_0(\ma^{-1}(x-\xl))
	\quad \text{ for } x \in \Om,
\end{equation}
where $(\xl)_{\alpha \in \nn}$ and $(\ma)_{\alpha \in \nn}$ are respectively sequences of points in $\Om$ and positive real numbers, and where we have let
\begin{equation}\label{B0}
B_0(x)=\left( 1+\frac{|x|^2}{n(n-2)}\right) ^{1-\frac{n}{2}} \bb{ for any } x\in \rr^n.
\end{equation} 
It is well-known that $B_0$ satisfies $-\Delta B_0 = B_0^{2^*-1}$ in $\R^n$ and achieves $K_n^{-2}$ in \eqref{defkn}. As a consequence of \eqref{sumvla}, we have 
\begin{equation}\label{limitefaiblevla}
	\lim_{\alpha\to +\infty}\va= \pm \vo \bb{ weakly in } H_0^1(\Om)
\end{equation}
and 
$$\lim_{\alpha \to +\infty} \int_{\Om} |\va|^{2^*} \, dx =  K_n^{-n} + \int_{\Om} |\vo|^{2^*} \, dx. $$
A consequence of \eqref{limvlaL2star} and of the assumption $I_{\hi}(\Om)<K_n^{-2}$ is that either $v_\infty \equiv 0$ or $\vo$ is a least-energy positive solution of
\begin{equation}\label{critu0}
	\left\{\begin{aligned}
		-\Delta \vo +\hi \vo & =\vo^{2^*-1}\hbox{ in } \Omega, \\
		\vo&>0  \bb{ in }\Om,\\
		\vo&= 0  \hbox{ on } \partial \Omega.
	\end{aligned}\right.
\end{equation} 
If $\va$ is assumed to change sign for all $\alpha \ge 1$, that is if $(\va)_+$ and $(\va)_- $ are nonzero, the arguments in \cite[Lemma 3.1]{CeramiSoliminiStruwe} show that $\vo >0$, and hence that 
$$\lim_{\alpha \to +\infty} \int_{\Om} |\va|^{2^*} \, dx =  K_n^{-n} + I_{h_\infty}(\Om)^{\frac{n}{2}}. $$
This observation will be important in the proof of Theorem \ref{maintheo2bis} but will not be used in this Section. Without loss of generality we can assume that $(\xl)_{\alpha \in \nn}$ and $(\ma)_{\alpha \in \nn}$ are chosen as follows:
\begin{equation}\label{defma}
	\big| \va(\xl)\big| =\left\| \va(x)\right\|_{\infty} \bb{ and }\ma:=\big| \va(\xl)\big|^{-\frac{2}{n-2}},
\end{equation}
so that $\xl \in \Omega$. Note that \eqref{limvlaLinfini} implies that $\ma\to 0$ as $\alpha\to+\infty$. We will denote by $x_{\infty}\in \overline{\Om}$ the limit of the $x_{\alpha}$'s as $\alpha\to +\infty$. In the case where $v_\infty >0$, Hopf's lemma shows that there exists $C_0>0$ such that 
\begin{equation}\label{estu0}
	C_0^{-1}\,d(x,\partial\Om)\leq \vo(x)\leq C_0\, d(x,\partial\Om) \bb{ for all } x\in \Om,
\end{equation} 
where $d(x,\partial\Om):=\inf\{|x-y|: y\in\partial \Om\}$ is the distance of $x$ to boundary. In \eqref{sumvla} we used the notation $\va=B_{\alpha}\pm\vo+\vp_{\alpha}$, which classically means either $\va = B_\alpha + \vo + \vp_{\alpha}$ or $\va = B_\alpha - \vo + \vp_{\alpha}$. It will often be more convenient to substract $B_\alpha \pm \vo$ to $\ua$ (for instance in the statement of Theorem \ref{maintheo1} below), which we will thus write as 
$$ \va - B_\alpha \mp \vo = \vp_\alpha$$
so that the sign convention is satisfied.

\medskip

The purpose of this section is to turn \eqref{sumvla} into a decomposition in strong spaces, and to obtain sharp pointwise estimates on $\va$. In order to state our main result we need to introduce a few more notations. For $\alpha$ large, thanks to \eqref{haconvhi}, $-\Delta+\ha$ is coercive in $H^1_0(\Om)$. We can thus let $G_{\alpha}$ be the Green's function of $-\Delta+h_{\alpha}$ in $\Omega$ with Dirichlet boundary conditions. By standard properties of the Green's function (see \cite{RobDirichlet}), there exists $C>0$ such that for all $\alpha \ge 1$ we have 
\begin{equation}\label{estGlai}
	G_{\alpha}(y,x)\leq \frac{C}{|y-x|^{n-2}} \min\left\lbrace 1,\frac{d(y,\partial\Om)d(x,\partial\Om)}{|y-x|^2}\right\rbrace \bb{ for all }\, x,y \in \Om,\,  x\neq y,
\end{equation}
and 
\begin{equation}\label{estnablaGlai}
	\big|\nabla G_{\alpha}(y,x)\big|\leq C |y-x|^{1-n} \bb{ for all } \, x,y \in \Om,\, x\neq y.
\end{equation}
For $\alpha \ge1$,  we let $\Pi B_\alpha$ be the unique solution in $H^1_0(\Om)$ of 
\begin{equation} \label{projbulle}
\left \{ \begin{aligned}
\big(-\Delta + h_\alpha\big) \Pi B_\alpha & = B_{\alpha}^{2^*-1} \quad \text{ in } \Om \\
\Pi B_\alpha & = 0 \quad \text{ on } \partial \Om.
\end{aligned} \right. 
\end{equation}
Since $B_\alpha$ satisfies $-\Delta B_\alpha = B_\alpha^{2^*-1}$ in $\R^n$ by \eqref{Bla} and \eqref{B0} we easily see with \eqref{projbulle} that $B_\alpha - \Pi B_\alpha \to 0$ in $H^1_0(\Om)$ as $\alpha \to + \infty$. Thus \eqref{sumvla} rewrites as 
 \begin{equation} \label{sumvla2}
 	\va=\Pi B_{\alpha}\pm\vo+o(1)  \bb{ in } H_0^1(\Om) \text{ as } \alpha \to + \infty.  
\end{equation}
A representation formula for $\Pi B_\alpha$ together with \eqref{estGlai} shows that there exists $C>0$ such that for all $x \in \Om$ and all $\alpha \ge 1$ we have 
\begin{equation} \label{contprojbulle}
0 < \Pi B_\alpha(x) \le C B_{\alpha}(x),
\end{equation}
where positivity follows from the coercivity of $-\Delta + h_\alpha$. We can now state the main result of this Section: 
\begin{theo}\label{maintheo1}
	Let $\Om$ be a smooth bounded domain of $\rr^n$, $n\geq 3$, and $(\ha)_{\alpha\in\nn}$ be a sequence of functions that converges in $C^{0}(\overline{\Om})$ to $\hi$. We assume that $-\Delta+\hi$ is coercive in $H^1_0(\Om)$ and that $I_{\hi}(\Om)<K_{n}^{-2}$. Let $(\va)_{\alpha\in\nn}\in H_0^1(\Om)$ be a sequence of solutions of \eqref{critvlambda} that satisfies \eqref{limvlaL2star}, \eqref{limvlaLinfini} and \eqref{sumvla}. 
	There exists a sequence $(\ve_\alpha)_{\alpha \in\nn}$ of positive real numbers converging to $0$ such that, up to a subsequence we have, for any $x \in \Om$ and $\alpha \ge 1$,
	\begin{equation}\label{theorieC0}
		\Big| v_{\la}(x) - \Pi B_\alpha(x) \mp v_\infty(x) \Big| \leq \ve_\alpha \big(  B_{\alpha}(x)+\vo(x)\big). 
		\end{equation}
\end{theo}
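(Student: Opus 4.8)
The plan is to upgrade the weak $H^1_0$ decomposition \eqref{sumvla2} to the pointwise bound \eqref{theorieC0} by a contradiction/rescaling argument combined with a Green's-representation-formula estimate, in the spirit of the $C^0$-theory for positive blow-up (De Giorgi–Nash–Moser plus the Green's function bounds \eqref{estGlai}–\eqref{estnablaGlai}).

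\textbf{Step 1: Set up the contradiction and the scaled quantities.} Suppose \eqref{theorieC0} fails. Then there is $\delta_0>0$, a subsequence, and a sequence of points $y_\alpha\in\Om$ such that
\begin{equation}\label{contradsetup}
\big|\va(y_\alpha)-\Pi B_\alpha(y_\alpha)\mp\vo(y_\alpha)\big|\ge \delta_0\,\big(B_\alpha(y_\alpha)+\vo(y_\alpha)\big).
\end{equation}
Set $w_\alpha:=\va-\Pi B_\alpha\mp\vo$; then $w_\alpha\to 0$ in $H^1_0(\Om)$ by \eqref{sumvla2}, and $w_\alpha$ satisfies an equation of the form $-\Delta w_\alpha + h_\alpha w_\alpha = |\va|^{2^*-2}\va - B_\alpha^{2^*-1}\mp \vo^{2^*-1} + (\hi-h_\alpha)\,(\pm\vo)$ with zero boundary data. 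Define $N_\alpha:=\big\|w_\alpha/(B_\alpha+\vo)\big\|_\infty$ and let $z_\alpha$ be a point (in $\overline{\Om}$; it lies in $\Om$ since $w_\alpha=B_\alpha+\vo=0$ on $\partial\Om$ and $w_\alpha/(B_\alpha+\vo)$ is bounded near $\partial\Om$ by boundary regularity) where the supremum is essentially attained; by \eqref{contradsetup}, $N_\alpha\ge\delta_0/2>0$. The goal is to derive $N_\alpha=o(N_\alpha)$, a contradiction.

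\textbf{Step 2: Representation formula and the nonlinear term estimate.} Write, using $G_\alpha$,
\begin{equation}\label{reprformula}
w_\alpha(x)=\int_\Om G_\alpha(x,y)\Big(|\va|^{2^*-2}\va - B_\alpha^{2^*-1}\mp\vo^{2^*-1}\Big)(y)\,dy \;+\; \int_\Om G_\alpha(x,y)(\hi-h_\alpha)(\pm\vo)(y)\,dy.
\end{equation}
Since $\va = B_\alpha\pm\vo+w_\alpha$, one estimates the integrand pointwise: on the region where $B_\alpha+\vo$ dominates one has $\big||\va|^{2^*-2}\va-B_\alpha^{2^*-1}\mp\vo^{2^*-1}\big|\le C(B_\alpha+\vo)^{2^*-2}|w_\alpha| + C\,($cross terms between $B_\alpha$ and $\vo) + C|w_\alpha|^{2^*-1}$. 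The cross terms $B_\alpha^{2^*-2}\vo + \vo^{2^*-2}B_\alpha$ are handled by the standard "interaction" estimate: convolving $G_\alpha(x,y)\lesssim|x-y|^{2-n}$ with them and using \eqref{contprojbulle}, \eqref{estu0}, and the explicit form \eqref{Bla}–\eqref{B0} shows they contribute at most $o(1)(B_\alpha(x)+\vo(x))$; the last term contributes the perturbation $h_\alpha\to\hi$, which is $o(1)(B_\alpha+\vo)$ by \eqref{estu0} and \eqref{contprojbulle}. The main linear term, divided by $N_\alpha$, is bounded by $\int_\Om G_\alpha(x,y)(B_\alpha+\vo)^{2^*-2}(y)\cdot\frac{|w_\alpha|}{B_\alpha+\vo}(y)\,dy\le N_\alpha\int_\Om G_\alpha(x,y)(B_\alpha+\vo)^{2^*-1}(y)\,dy$.

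\textbf{Step 3: The Green-function convolution estimate and conclusion.} The crux is the pointwise bound $\int_\Om G_\alpha(x,y)\big(B_\alpha(y)+\vo(y)\big)^{2^*-1}\,dy \le C\,\big(B_\alpha(x)+\vo(x)\big)$ \emph{uniformly} in $\alpha$ and $x$. For the $\vo^{2^*-1}$ piece this is immediate since $\vo$ solves $(-\Delta+\hi)\vo=\vo^{2^*-1}$. For the $B_\alpha^{2^*-1}$ piece it is exactly the defining property of $\Pi B_\alpha$ together with \eqref{contprojbulle}. The genuinely delicate case is where $x$ is far from $x_\alpha$ and near $\partial\Om$, so that $B_\alpha(x)$ is tiny and one must extract the sharp decay $|x-x_\alpha|^{2-n}$ with the correct boundary factor $d(x,\partial\Om)$ from \eqref{estGlai}; this is where the $\min\{1,\,d(x,\partial\Om)d(y,\partial\Om)/|x-y|^2\}$ refinement in \eqref{estGlai} is essential and where the sign-changing nature of $\va$ forces us to keep the $\vo$ term (which vanishes linearly at $\partial\Om$ by \eqref{estu0}) in the competitor $B_\alpha+\vo$. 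Granting this, \eqref{reprformula} evaluated at $x=z_\alpha$ and divided by $N_\alpha(B_\alpha(z_\alpha)+\vo(z_\alpha))$ gives $1\le C\varepsilon_\alpha + o(1) + C\,N_\alpha^{2^*-2}$; since $2^*-2>0$ we need $N_\alpha$ bounded to close this, which one first obtains from a cruder global argument (e.g. an $L^\infty$ bound on $w_\alpha/(B_\alpha+\vo)$ away from $x_\alpha$ using $\Pi B_\alpha\asymp B_\alpha$ and standard blow-up estimates for $\va$ near $x_\alpha$ coming from \eqref{sumvla}), after which $N_\alpha^{2^*-2}=o(1)$ if $N_\alpha\to 0$ — or, if $N_\alpha\not\to 0$, a finer rescaling of $w_\alpha/N_\alpha$ around $z_\alpha$ (blowing up at scale $\ma$ if $z_\alpha$ is in the bubble region, or taking the weak limit directly otherwise) produces a nonzero bounded solution of the linearized equation $-\Delta W = (2^*-1)B_0^{2^*-2}W$ on $\R^n$ (resp. $(-\Delta+\hi)W = (2^*-1)\vo^{2^*-2}W$ on $\Om$ with $W=0$ on $\partial\Om$) with $|W|\le 1$; by nondegeneracy of $B_0$ (resp. by coercivity/the fact that $I_{\hi}(\Om)$ is attained, forcing the linearization at $\vo$ to be injective on $H^1_0$) and the normalization forcing $W$ to vanish at the relevant limit point, $W\equiv 0$, contradicting $|W(0)|=1$ (or $=\delta_0/2$).

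\textbf{Main obstacle.} The hard part is Step 3 in the boundary region: proving the uniform convolution bound $\int_\Om G_\alpha(x,y)(B_\alpha+\vo)^{2^*-1}\,dy\lesssim B_\alpha(x)+\vo(x)$ \emph{all the way up to $\partial\Om$}, where $\Pi B_\alpha$ genuinely deviates from $B_\alpha$ and $\vo$ degenerates. This requires a careful splitting of $\Om$ into $\{|y-x_\alpha|\le|x-x_\alpha|/2\}$, $\{|y-x|\le|x-x_\alpha|/2\}$, and the remainder, exploiting the two-sided boundary decay in \eqref{estGlai}; controlling the case $n\ge 7$ versus $3\le n\le 5$ uniformly (the exponent $2^*-1$ and the integrability of $|x-x_\alpha|^{2-n}$ against $B_\alpha^{2^*-1}$ behave differently), and handling the interplay with the cross-terms so that everything collapses into $o(1)(B_\alpha+\vo)$, is the technical heart of the argument.
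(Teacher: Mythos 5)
Your high-level framework — Green's representation formula for the difference $w_\alpha = v_\alpha - \Pi B_\alpha \mp v_\infty$, a convolution estimate against $(B_\alpha+v_\infty)^{2^*-1}$, and a splitting of $\Omega$ into a bubble region and its complement — is the right one, and it matches the spirit of the paper's final step. But there are three genuine gaps that prevent your argument from closing, and the paper's actual route is substantially different.

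\emph{First}, your Step 2 is circular as written. To bound $\bigl||v_\alpha|^{2^*-2}v_\alpha - B_\alpha^{2^*-1}\mp v_\infty^{2^*-1}\bigr|$ by $C(B_\alpha+v_\infty)^{2^*-2}|w_\alpha| + (\text{cross terms}) + C|w_\alpha|^{2^*-1}$ in a usable way, you already need the a priori pointwise bound $|v_\alpha| \le C(B_\alpha + v_\infty)$ — precisely what the paper proves as \eqref{controlva} after three intermediate propositions, and precisely a version of what you are trying to establish. You acknowledge this (``cruder global argument'') but do not supply it; in the paper it is the bulk of the work. The paper first proves the weaker weighted estimate of Proposition \ref{limiteinLinfini}, then a comparison-principle barrier estimate (Proposition \ref{firstestimation}), then a Green-representation upper bound (Proposition \ref{est3vla}), and only then the clean bound \eqref{controlva}; none of these are one-liners, and all are needed before your Step 2 can be justified.

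\emph{Second}, your key convolution estimate
$\int_\Omega G_\alpha(x,y)(B_\alpha+v_\infty)^{2^*-1}(y)\,dy \lesssim B_\alpha(x)+v_\infty(x)$
up to $\partial\Omega$ is flagged by you as the ``technical heart'' but is not proved. The paper does not establish such a single clean inequality; instead it proves the technical Lemma \ref{lemme:estu0}, namely $\int_U G_\alpha(y,x)\,dx \le C(U)\,d(y,\partial\Omega)$ with $C(U)\to 0$ as $|U|\to 0$, and combines it with Hopf's lemma \eqref{estu0} and the pointwise bounds \eqref{estGlai} via a three-region splitting $B_{R\mu_\alpha}(x_\alpha)$, $B_{1/R}(x_\alpha)\setminus B_{R\mu_\alpha}(x_\alpha)$, $\Omega\setminus B_{1/R}(x_\alpha)$. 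The cross terms $B_\alpha^{2^*-2}v_\infty$ and $v_\infty^{2^*-2}B_\alpha$ and the $(h_\infty-h_\alpha)v_\infty$ term do not fold into a single ``$o(1)(B_\alpha+v_\infty)$'' without this machinery — especially near the boundary, where the sharp dependence on $d(\cdot,\partial\Omega)$ in \eqref{estGlai} must be exploited precisely because $\Pi B_\alpha$ deviates from $B_\alpha$ there.

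\emph{Third}, the fallback argument at the end of Step 3 is incorrect. If $N_\alpha$ is bounded but not going to zero (which is the case you are in by the contradiction assumption $N_\alpha\ge\delta_0/2$), rescaling $w_\alpha/N_\alpha$ around $z_\alpha$ at scale $\mu_\alpha$ does not produce the linearized equation $-\Delta W = (2^*-1)B_0^{2^*-2}W$: the error terms $|w_\alpha|^{2^*-1}$ and the $B_\alpha^{2^*-2}w_\alpha$ commutator are not small relative to the leading term when $N_\alpha\asymp 1$, and you obtain the full nonlinear equation for $v_\alpha$ again. Moreover, even if you did land on the linearization, the conclusion ``$W\equiv 0$'' from a single-point normalization fails: the kernel of $-\Delta - (2^*-1)B_0^{2^*-2}$ over bounded decaying functions is $(n+1)$-dimensional, spanned by $\partial_\lambda B_{\lambda,x}$ and $\partial_{x_i} B_{\lambda,x}$, and the translation kernel elements $\partial_i B_0$ vanish at the origin. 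The paper avoids nondegeneracy entirely: in Proposition \ref{limiteinLinfini} it rescales at the adapted scale $\nu_\alpha := |v_\alpha(y_\alpha)|^{-2/(n-2)}$ (not $\mu_\alpha$), so the limit solves the \emph{nonlinear} equation $-\Delta w_0 = |w_0|^{2^*-2}w_0$ with $|w_0(0)|=1$; Struwe's energy quantization \eqref{sumvla} then forces $\int|w_0|^{2^*}=0$, giving the contradiction. You would need to replace your nondegeneracy step with this scale-adapted energy argument.
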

Pointwise descriptions of blowing-up solutions as in Theorem \ref{maintheo1} were first obtained for \emph{positive} solutions of critical Schr\"odinger-type equations on manifolds without boundary: see for instance \cite{DHpde, DHR} (see also \cite{HebeyZLAM}). For \emph{positive} solutions of equations like \eqref{critvlambda} in bounded open subsets of $\R^n$ they were recently obtained in \cite{LaurainKonig, LaurainKonig2}. Similar estimates have been obtained for positive solutions of Hardy-Sobolev equations in  \cite{cheikh2022second, GhoussoubMazumdarRobert}. These sharp pointwise estimates have proven crucial in order to obtain compactness and stability results for critical stationary elliptic equations \cite{DruetJDG, DruetLaurain}. When it comes to \emph{sign-changing} blowing-up solutions, a general pointwise description as in Theorem \ref{maintheo1}, on manifolds without boundary, has been recently obtained in \cite{premoselli2024priori, PremoselliRobert}, and subsequent compactness results have been proven in \cite{PremoselliRobert, PremoselliVetois2, PremoselliVetois3}. Theorem \ref{maintheo1} is, to our knowledge, the first instance where sharp pointwise estimates for blowing-up solutions of equations like \eqref{critvlambda} are obtained up to the boundary of $\Om$. Note indeed that in Theorem \ref{maintheo1} we do not assume that the concentration point $x_\infty = \lim_{\alpha \to + \infty} x_\alpha$ is an interior point in $\Om$. It may happen that $x_\infty \in \partial \Om$: the real novelty of Theorem \ref{maintheo1} is that \eqref{theorieC0} holds regardless of the speed of convergence of $x_\alpha$ to $\partial \Om$, uniformly in $x \in \overline{ \Om}$. This creates additional technical difficulties that we overcome in the course of the proof. 
 
 \medskip
 
We prove Theorem \ref{maintheo1} by taking inspiration from the arguments in \cite{DHpde} (see also \cite{HebeyZLAM}). Throughout this section we let $\Om$ be a smooth bounded domain in $\rr^n$, $n\geq 3$, $(\ha)_{\alpha \in \nn} \in C^{0}(\overline{\Om})$ and  $(\va)_{\alpha \in \nn}\in H_0^1(\Om)$  be such that \eqref{haconvhi}, \eqref{critvlambda}, \eqref{limvlaLinfini},   and \eqref{sumvla} hold, and we let $(\xl)_{\alpha \in \nn} \in\Om$ and $(\ma)_{\alpha \in \nn}$  be as defined as in \eqref{defma}. We start with the following simple proposition:
	\begin{prop}\label{prop:Blowup1}
We have 
\begin{equation}\label{dasurma}
	\lim_{\la\to \ls} \frac{d(\xl,\partial\Om)}{\ma}= +\infty.
\end{equation} 
 We define the rescaled function 
	\begin{equation}\label{deftva}
		\tva(x):=\ma^{\frac{n-2}{2}}\va(\xl+\ma x) \bb{ for all } x\in \Om_{\la},
	\end{equation}
where $\Om_{\la}:= \{x\in \rr^n \bb{ such that } \xl+\ma x\in \Om\}$. Then
 \begin{equation}\label{limtvaB0}
 	\lim_{\la\to\ls} \tva(x)=B_0(x) \bb{ in }C^2_{loc}(\rr^n),
 \end{equation}
 where $B_0$ is defined in \eqref{B0}.
 \end{prop}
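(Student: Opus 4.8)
The plan is to run a standard rescaling (blow-up) analysis centered at $\xl$ at the scale $\ma$. Estimate \eqref{limtvaB0} will follow once \eqref{dasurma} is known, since then the rescaled domains exhaust $\rr^n$ and no boundary effect survives in the limit; the only genuinely delicate point is \eqref{dasurma} itself, i.e.\ ruling out that the rescaled domains converge to a half-space.

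\textbf{Step 1 (proof of \eqref{dasurma}).} Set $\da:=d(\xl,\partial\Om)$. If $\xin\in\Om$ this is immediate, since then $\da\to d(\xin,\partial\Om)>0$ while $\ma\to0$ by \eqref{limvlaLinfini}. So assume $\xin\in\partial\Om$ (hence $\da\to0$) and argue by contradiction: along a subsequence, $\da/\ma\to\ell\in[0,+\infty)$. Define $\tva$ as in \eqref{deftva}; it solves
\[
-\Delta\tva+\ma^2\,\ha(\xl+\ma x)\,\tva=|\tva|^{2^*-2}\tva \quad\bb{ in }\Om_\la,\qquad \tva=0 \quad\bb{ on }\partial\Om_\la,
\]
and $|\tva|\le|\tva(0)|=1$ by \eqref{defma} (up to replacing $\va$ by $-\va$, again a solution of \eqref{critvlambda}). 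Let $\bxl\in\partial\Om$ realise $\da=|\xl-\bxl|$ and, along a further subsequence, rotate coordinates so that the inner unit normal to $\partial\Om$ at $\bxl$ tends to $\ea$; then the smoothness of $\partial\Om$ together with $\ma\to0$ gives that $\Om_\la$ converges in $C^2_{loc}$ to the half-space $\mathbb H_\ell:=\{x\in\rr^n:\ x_1>-\ell\}$, with $0$ an interior point of $\mathbb H_\ell$ when $\ell>0$ and $0\in\partial\mathbb H_\ell$ when $\ell=0$. Since $|\tva|\le1$ and $\ma^2\ha(\xl+\ma\cdot)\to0$ locally uniformly, interior and boundary elliptic estimates (the latter after locally flattening $\partial\Om_\la$) yield a uniform $C^{2,\gamma}_{loc}(\overline{\mathbb H_\ell})$ bound; so, up to a subsequence, $\tva\to\tilde v_\infty$ in $C^2_{loc}(\overline{\mathbb H_\ell})$, where $\tilde v_\infty$ solves $-\Delta\tilde v_\infty=|\tilde v_\infty|^{2^*-2}\tilde v_\infty$ in $\mathbb H_\ell$, $\tilde v_\infty=0$ on $\partial\mathbb H_\ell$, with $|\tilde v_\infty|\le1$, $\tilde v_\infty(0)=1$, and $\tilde v_\infty$ of finite Dirichlet energy: indeed the coercivity of $-\Delta+\ha$ (for $\alpha$ large) and \eqref{limvlaL2star} bound $\Vert\va\Vert_{H^1_0}$, and since the Dirichlet energy is scale-invariant $\int_{\Om_\la}|\nabla\tva|^2=\Vert\va\Vert_{H^1_0}^2$ is bounded, so Fatou's lemma gives $\int_{\mathbb H_\ell}|\nabla\tilde v_\infty|^2<+\infty$. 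If $\ell=0$, then $\tilde v_\infty(0)=0$ by the boundary condition, contradicting $\tilde v_\infty(0)=1$. If $\ell>0$, then $\tilde v_\infty$ is a nontrivial finite-energy solution of the critical equation on a half-space with zero Dirichlet data, which is impossible: this is the Liouville-type nonexistence theorem for half-spaces, which one may re-prove here by multiplying the equation by $\partial_{x_1}\tilde v_\infty$ and integrating over $\mathbb H_\ell\cap B_{R_k}$ along a suitable $R_k\to+\infty$ (the finiteness of $\int_{\mathbb H_\ell}(|\nabla\tilde v_\infty|^2+|\tilde v_\infty|^{2^*})$ provides $R_k$ along which the terms at infinity vanish); this forces $\partial_{x_1}\tilde v_\infty=0$ on $\partial\mathbb H_\ell$, hence, together with $\tilde v_\infty=0$ on $\partial\mathbb H_\ell$, $\tilde v_\infty\equiv0$ by unique continuation. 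In all cases we reach a contradiction, proving \eqref{dasurma}.

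\textbf{Step 2 (proof of \eqref{limtvaB0}).} By \eqref{dasurma}, for every $R>0$ we have $B(0,R)\subset\Om_\la$ for $\alpha$ large, so $\Om_\la$ exhausts $\rr^n$. Arguing as in Step 1, but now with only interior elliptic estimates, we get, up to a subsequence, $\tva\to\tilde v_\infty$ in $C^2_{loc}(\rr^n)$, where $\tilde v_\infty$ is a bounded, finite-energy solution of $-\Delta\tilde v_\infty=|\tilde v_\infty|^{2^*-2}\tilde v_\infty$ in $\rr^n$ with $\tilde v_\infty(0)=1$ (so $\tilde v_\infty\not\equiv0$) and $|\tilde v_\infty|\le1$; moreover Fatou's lemma and \eqref{limvlaL2star} give $\int_{\rr^n}|\tilde v_\infty|^{2^*}\le K_n^{-n}+I_{\hi}(\Om)^{\frac{n}{2}}<2K_n^{-n}$, since $I_{\hi}(\Om)<K_n^{-2}$. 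Next, $\tilde v_\infty$ does not change sign: testing the equation against $(\tilde v_\infty)_{\pm}$ and using \eqref{defkn} shows $\int_{\rr^n}(\tilde v_\infty)_{\pm}^{2^*}\ge K_n^{-n}$ whenever $(\tilde v_\infty)_{\pm}\not\equiv0$, so a sign-changing $\tilde v_\infty$ would satisfy $\int_{\rr^n}|\tilde v_\infty|^{2^*}\ge 2K_n^{-n}$, contradicting the energy bound. Since $\tilde v_\infty(0)=1>0$ we deduce $\tilde v_\infty\ge0$, hence $\tilde v_\infty>0$ by the strong maximum principle. By the classification of positive solutions of the critical equation on $\rr^n$ (Caffarelli--Gidas--Spruck; see also \cite{AubinYamabe,Talenti}), $\tilde v_\infty(x)=\mu^{-\frac{n-2}{2}}B_0(\mu^{-1}(x-x_0))$ for some $\mu>0$ and $x_0\in\rr^n$, and the normalization $|\tilde v_\infty|\le1=\tilde v_\infty(0)$ forces $\mu=1$ and $x_0=0$, since $B_0$ attains its maximum $1$ only at the origin. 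Hence $\tilde v_\infty=B_0$; as the limit is independent of the subsequence, $\tva\to B_0$ in $C^2_{loc}(\rr^n)$, which is \eqref{limtvaB0}.

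\textbf{On the main obstacle.} The substantive difficulty is Step 1 when $\ell>0$: excluding a nontrivial finite-energy solution of the critical equation on a half-space with zero Dirichlet data. For \emph{positive} solutions this is classical, but here $\tilde v_\infty$ may a priori change sign, so the nonexistence must be invoked (or re-derived via the Pohozaev identity above) in the sign-changing class. The accompanying technical points — selecting the rotations and checking the $C^2_{loc}$-convergence $\Om_\la\to\mathbb H_\ell$, flattening $\partial\Om_\la$ to run elliptic estimates up to the boundary, and justifying the decay used in the Pohozaev identity — are routine but must be handled.
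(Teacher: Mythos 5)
Your proof is correct in substance but takes a genuinely different, more self-contained route than the paper's, and there is one small sign issue to patch. The paper's proof is very brief: \eqref{dasurma} is simply quoted from \cite{Struwe} (and \cite[Theorem 1.2]{Mazumdar2}), and \eqref{limtvaB0} is then obtained by noting that the Struwe decomposition \eqref{sumvla}, rescaled at the scale $\ma$, already forces $\tva\to B_0$ in $L^{2^*}_{loc}(\R^n)$, which together with the $C^2_{\loc}$ subsequential limit identifies $\tilde v=B_0$ immediately. You instead re-derive \eqref{dasurma} from scratch by the half-space blow-up/Liouville argument (essentially the proof underlying the cited result), and you identify the $C^2_{\loc}$ limit in Step 2 without ever using \eqref{sumvla}: you rely on the scale-invariant energy bound $\int_{\R^n}|\tilde v_\infty|^{2^*}\le K_n^{-n}+I_{\hi}(\Om)^{n/2}<2K_n^{-n}$ to exclude sign change of the limit profile, and then on the Caffarelli--Gidas--Spruck classification plus the normalization $\|\tilde v_\infty\|_\infty=|\tilde v_\infty(0)|=1$ to pin it down. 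Both routes are valid; yours is longer but self-contained, and you correctly flag the one delicate point, namely that the Liouville nonexistence on a half-space must cover \emph{sign-changing} finite-energy solutions, which the Pohozaev identity in the direction $e_1$ combined with unique continuation indeed handles. The one genuine (though small) gap is in Step 2: your argument only yields $\tilde v_\infty=\pm B_0$, since $\tva(0)=\pm1$ and the device of ``replacing $\va$ by $-\va$'' is not available here — it would break the decomposition \eqref{sumvla}, which has $B_\alpha>0$, and it would change what \eqref{limtvaB0} asserts. To fix the sign and conclude $\tilde v_\infty=B_0$ one must, as the paper does, invoke \eqref{sumvla} (e.g.\ its rescaled $L^{2^*}_{\loc}$ consequence $\tva\to B_0$) to see that $\va(\xl)>0$ for $\alpha$ large. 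This is a one-line addition, but it cannot be avoided, because \eqref{sumvla} is precisely what selects the sign of the bubble.
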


\begin{proof}
First, \eqref{dasurma} follows from Struwe's original result \cite{Struwe} (see also \cite[Theorem 1.2]{Mazumdar2}).  We now prove \eqref{limtvaB0}.	For $x\in \Om_{\la}:= \{x\in \rr^n \bb{ s.t. } \xl+\ma x\in \Om\}$, it is clear by \eqref{critvlambda} and \eqref{deftva} that
	\begin{equation*}
		\left\{\begin{array}{ll}
			-\Delta \tva +\tha\ma^2 \tva=\left|\tva\right|^{2^*-2}\tva &\hbox{ in } \Om_{\la}, \\
			\tva=0  &\hbox{ on } \partial \Om_{\la},
		\end{array}\right.
	\end{equation*}
where $\tha(x)=\ha(\xl+\ma x)$ and $\tva$ is defined in \eqref{deftva}. We remark that $|\tva|\leq |\tva(0)|= 1$. It follows from \eqref{haconvhi} and from standard elliptic theory that, after passing to a subsequence, $\tva \to \tilde{v}$ in $C^2_{loc}(\rr^n)$,	where $\tilde{v}\in C^2(\rr^n)$ is such that 
	\begin{equation*}
		-\Delta \tilde{v}=\left|\tilde{v}\right|^{2^*-2}\tilde{v} \hbox{ in } \rr^n, 
	\end{equation*}
	and $|\tilde{v}| \leq 1$. Let $K\subset\subset \rr^n$ be a nonempty compact subset of $\rr^n$. By \eqref{sumvla} we have 
	$\tva\to B_0$ in $L^{2^*}(K)$ as $\la\to \ls$, so that $\tilde{v}=B_0$ in $K$, which proves \eqref{limtvaB0}.
\end{proof}
Using \eqref{dasurma} and standard elliptic theory, together with \eqref{projbulle} and \eqref{contprojbulle}, we also obtain that 
\begin{equation} \label{contprojbulle2}
\ma^{\frac{n-2}{2}} \Pi B_\alpha(x_\alpha + \ma x) \to B_0(x) \quad \text{ in } C^2_{\loc}(\R^n)
\end{equation}
as $\alpha \to + \infty$.The following result establishes a first pointwise control on $v_\alpha$: 

\begin{prop}\label{limiteinLinfini}
For $x \in \Om$ we let $D_{\la}(x):= |x-\xl|+\ma$. Then 
\begin{equation}\label{eq:limiteinLinfini}
D_{\la}(x)^{\frac{n-2}{2}}\Big| v_{\la} -\Pi B_{\la} \mp\vo\Big|\to 0 \bb{ in } C^0(\overline{\Om}) \bb{ as }\la\to \ls
\end{equation}
where $v_{\infty}$ and $\Pi B_{\alpha}$ are as defined in \eqref{limitefaiblevla}, \eqref{critu0} and   \eqref{projbulle}. 
\end{prop}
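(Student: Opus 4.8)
The plan is to argue by contradiction and normalization: suppose \eqref{eq:limiteinLinfini} fails, so that there exists $\delta>0$ and, up to a subsequence, a sequence $(y_\alpha)_{\alpha \in \nn} \in \overline{\Om}$ realizing
\[
\Bigl(\sup_{x \in \overline\Om} D_\la(x)^{\frac{n-2}{2}}\bigl| v_\la(x) - \Pi B_\la(x) \mp \vo(x)\bigr|\Bigr) = D_\la(y_\alpha)^{\frac{n-2}{2}}\bigl| v_\la(y_\alpha) - \Pi B_\la(y_\alpha) \mp \vo(y_\alpha)\bigr| \ge \delta
\]
for all $\alpha$. Writing $w_\alpha := v_\alpha - \Pi B_\alpha \mp \vo = \vp_\alpha$, this says $\Vert D_\la^{(n-2)/2} w_\alpha\Vert_\infty = \Vert D_\la^{(n-2)/2} w_\alpha\Vert_\infty =: M_\alpha \ge \delta$. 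First I would show $M_\alpha$ is bounded: indeed $\Vert \vp_\alpha\Vert_{H^1_0}\to 0$ by \eqref{sumvla}, and combining the representation formula $w_\alpha(x) = \int_\Om G_\alpha(x,y)\bigl(|v_\alpha|^{2^*-2}v_\alpha - B_\alpha^{2^*-1} \mp \vo^{2^*-1}\bigr)(y)\,dy$ with the Green's function bound \eqref{estGlai}, the pointwise control $0 < \Pi B_\alpha \le CB_\alpha$ from \eqref{contprojbulle}, the bound $0 \le \vo \le C\,d(\cdot,\partial\Om)$ from \eqref{estu0}, and the $C^2_{\loc}$-convergence \eqref{limtvaB0}, \eqref{contprojbulle2}, one gets $|w_\alpha(x)| \le C(M_\alpha^{?})D_\la(x)^{-(n-2)/2}$ with a power of $M_\alpha$ strictly less than $1$ on the right, or rather one bounds $|w_\alpha(x)| D_\la(x)^{(n-2)/2}$ by $o(1) + o(1) M_\alpha$ after carefully estimating the nonlinear remainder, forcing $M_\alpha$ bounded. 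I would then normalize $\tilde w_\alpha := M_\alpha^{-1} w_\alpha$, so that $\Vert D_\la^{(n-2)/2} \tilde w_\alpha\Vert_\infty = 1$ and the sup is essentially attained at $y_\alpha$.

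The heart of the argument is a dichotomy on the location of $y_\alpha$ relative to the bubble. Set $d_\alpha := |y_\alpha - x_\alpha|$ and consider the two regimes $d_\alpha/\ma \to +\infty$ (far from the bubble core) and $d_\alpha/\ma$ bounded (inside the core). In the core regime, rescale: $\hat w_\alpha(x) := \ma^{(n-2)/2} \tilde w_\alpha(x_\alpha + \ma x)$ satisfies, by \eqref{critvlambda} and the definition of $\Pi B_\alpha$, an equation of the form $-\Delta \hat w_\alpha + \ma^2 \tha \hat w_\alpha = \bigl(|\tva|^{2^*-2}\tva - \tilde B_\alpha^{2^*-1} \mp \ma^{(n-2)/2}\vo^{2^*-1}(x_\alpha+\ma\cdot)\bigr)/M_\alpha$ on $\Om_\alpha$, and the right-hand side converges in $C^0_{\loc}$ to a bounded multiple of $B_0^{2^*-2}\hat w$ (using \eqref{limtvaB0}, \eqref{contprojbulle2}, the fact that $\vo$ contributes negligibly at scale $\ma$ near an interior-or-boundary point, and $|\tva| \le 1$); since $|\hat w_\alpha| \le C(1+|x|)^{-(n-2)}$ uniformly (this is exactly the normalized bound), elliptic theory gives $\hat w_\alpha \to \hat w$ in $C^2_{\loc}(\R^n)$ with $\hat w$ a bounded solution of $-\Delta \hat w = (2^*-1)B_0^{2^*-2}\hat w$ decaying like $(1+|x|)^{-(n-2)}$. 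By the nondegeneracy of $B_0$ (the kernel of $-\Delta - (2^*-1)B_0^{2^*-2}$ in the relevant weighted space is spanned by $\partial_i B_0$ and the dilation generator $\tfrac{n-2}{2}B_0 + x\cdot\nabla B_0$, none of which decays fast enough, except the dilation one does), $\hat w$ is a linear combination of these. Here one uses that $y_\alpha$ was the maximum point together with the sign/orthogonality conditions coming from the choice $|v_\alpha(x_\alpha)| = \Vert v_\alpha\Vert_\infty$ and $\ma = |v_\alpha(x_\alpha)|^{-2/(n-2)}$ — these translate, after rescaling, into $\hat w(0) = 0$ and $\nabla\hat w(0) = 0$, which kill the whole kernel, so $\hat w \equiv 0$; but $|\hat w_\alpha(y_\alpha/\ma \cdot)|$ equalled $1$ at the max in the core regime (up to $o(1)$), contradiction.

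In the far regime $d_\alpha/\ma \to +\infty$, the strategy is a Green's-representation estimate exploiting that the nonlinear source of $w_\alpha$ is, away from the bubble core and near $\partial\Om$, controlled by $\vo^{2^*-2}|w_\alpha| + B_\alpha^{2^*-2}|w_\alpha| + (\text{lower order})$, where the lower-order terms come from $h_\alpha - h_\infty \to 0$, from $\Pi B_\alpha - B_\alpha$, and from the pointwise gap $||v_\alpha|^{2^*-2}v_\alpha - \Pi B_\alpha^{2^*-1} \mp \vo^{2^*-1}|$. One plugs $|w_\alpha| \le M_\alpha D_\la^{-(n-2)/2}$ into the representation formula $\tilde w_\alpha(y_\alpha) = \int_\Om G_\alpha(y_\alpha,y)(\cdots)\,dy$, splits $\Om$ into the core ball $B(x_\alpha, R\ma)$, an intermediate annulus, and the region near $\partial\Om$, and estimates each piece using \eqref{estGlai} — crucially the min$\{1, d(y,\partial\Om)d(y_\alpha,\partial\Om)/|y_\alpha-y|^2\}$ factor, which is what makes the boundary contribution small uniformly in the speed at which $x_\alpha \to \partial\Om$. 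The outcome is $D_\la(y_\alpha)^{(n-2)/2}|\tilde w_\alpha(y_\alpha)| \le o(1) + o(1)\cdot 1$, contradicting that it equals $1$. \textbf{The main obstacle} I anticipate is precisely this boundary-region estimate when $x_\alpha \to \partial\Om$ at an unknown rate: one cannot localize away from $\partial\Om$, so the decay of $G_\alpha$ near the boundary and the Hopf-type decay \eqref{estu0} of $\vo$ must be combined quantitatively, and the interplay between the scale $\ma$, the distance $d(x_\alpha,\partial\Om)$ (which by \eqref{dasurma} is $\gg \ma$ but may still $\to 0$), and $d_\alpha$ has to be tracked through several case distinctions; getting a clean universal bound here, rather than one that degenerates as $d(x_\alpha,\partial\Om) \to 0$, is the delicate point and is exactly the novelty the authors emphasize.
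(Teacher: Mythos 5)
Your contradiction set-up (extract a maximizing sequence $(y_\alpha)$ of the weighted difference and force $\ge \delta$) matches the paper's, but from there the strategies diverge substantially and your plan has a genuine gap.

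In the ``core regime'' $|y_\alpha-x_\alpha| = O(\ma)$, the elaborate linearization--plus--nondegeneracy argument you propose is both unnecessary and, as written, not available. If $|y_\alpha-x_\alpha| = O(\ma)$ then $D_\alpha(y_\alpha)^{(n-2)/2} |w_\alpha(y_\alpha)|$ rescales to $|\tilde v_\alpha(z_\alpha) - \ma^{(n-2)/2}\Pi B_\alpha(x_\alpha+\ma z_\alpha) \mp \ma^{(n-2)/2}v_\infty(\cdot)|$ evaluated at a bounded $z_\alpha$; by \eqref{limtvaB0} and \eqref{contprojbulle2} this tends to $|B_0(z)-B_0(z)| = 0$, directly contradicting the lower bound $\delta$. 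This is exactly the paper's Step 1. Trying instead to linearize $|\tilde v_\alpha|^{2^*-2}\tilde v_\alpha - B_0^{2^*-1}$ around $B_0$ and pass to a solution of $-\Delta \hat w = (2^*-1)B_0^{2^*-2}\hat w$ presupposes that the perturbation $M_\alpha\hat w_\alpha$ is small relative to $B_0$; but at the rescaled maximum point $\hat w_\alpha \approx 1$, so the perturbation is of size $M_\alpha \ge \delta$, and the expansion is not justified unless you already know $M_\alpha \to 0$, i.e.\ unless you have already won.

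The more serious problem is the ``far regime'' $|y_\alpha-x_\alpha|/\ma \to +\infty$. Your plan is to plug the bound $|w_\alpha| \le M_\alpha D_\alpha^{-(n-2)/2}$ into the Green's representation and control the source $|v_\alpha|^{2^*-2}v_\alpha - B_\alpha^{2^*-1} \mp v_\infty^{2^*-1}$ by $(B_\alpha^{2^*-2} + v_\infty^{2^*-2})|w_\alpha|$ plus lower-order terms. But that estimate on the source requires a \emph{pointwise} comparison $|v_\alpha| \lesssim B_\alpha + v_\infty$, which at this stage of the paper is not known --- it is precisely the content of Propositions~\ref{firstestimation}, \ref{est3vla} and \eqref{controlva}, all of which \emph{depend on} Proposition~\ref{limiteinLinfini}. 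At the stage of Proposition~\ref{limiteinLinfini} the only available control on $v_\alpha$ is $H^1_0 \cap L^{2^*}$, which does not yield the pointwise linearization you rely on near the concentration core (where $|v_\alpha|^{2^*-1}$ is of order $\ma^{-(n+2)/2}$). So your far-regime estimate is circular.

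What your plan is missing is the actual engine of the paper's proof: the $L^{2^*}$-smallness coming from Struwe's decomposition, i.e.\ $\varphi_\alpha = v_\alpha - B_\alpha \mp v_\infty \to 0$ in $H^1_0$, hence in $L^{2^*}(\Om)$. The paper introduces the scale $\nu_\alpha = |v_\alpha(y_\alpha)|^{-2/(n-2)}$ (the intrinsic scale of $v_\alpha$ at the maximizing point $y_\alpha$), rescales $w_\alpha(x) = \nu_\alpha^{(n-2)/2}v_\alpha(y_\alpha+\nu_\alpha x)$, shows via Steps 1--4 that $w_\alpha$ is locally bounded on compacts of $\R^n\setminus S$ and $|w_\alpha(0)|=1$, and passes to a nonlinear limit $w_0$ solving $-\Delta w_0 = |w_0|^{2^*-2}w_0$. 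The contradiction then comes for free: $w_\alpha \to 0$ in $L^{2^*}_{\loc}$ by \eqref{sumvla} and \eqref{limnuBla}, forcing $\int_K |w_0|^{2^*}=0$, hence $w_0\equiv 0$, contradicting $|w_0(0)|=1$. This argument is soft --- no linearization, no nondegeneracy, no pointwise control on $v_\alpha$ --- and is exactly what lets the proposition be established \emph{before} any pointwise bounds. Your proposal would need this $L^{2^*}$ argument, or some substitute for it, to close.
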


To prove Proposition \ref{limiteinLinfini} we proceed by contradiction: we assume that there exist $\ep_0>0$, and $(y_\la)_{\la\in \nn} \in \overline{\Om}$ such that 
\begin{eqnarray}\label{keycontra}
&&D_{\la}(\yl)^{\frac{n-2}{2}}\Big| v_{\la}(\yl)\mp\vo(\yl)-\Pi B_{\la}(\yl)\Big| \nonumber\\
&=&\max_{x\in\Om} \left(D_{\la}(x)^{\frac{n-2}{2}}\Big|v_{\la}(x)\mp\vo(x)-\Pi B_{\la}(x)\Big| \right) \geq \ep _0,
\end{eqnarray}
and, we let $(\nu_{\la})_{\alpha \in \nn}\in(0,+\infty)$ be such that 
\begin{equation}\label{defnula}
\big|v_{\la}(\yl)\big|=\nu_\la^{\frac{2-n}{2}} \quad \text{ for all } \alpha \ge 1.
\end{equation}
Since $v_\alpha, \Pi B_\alpha $ and $v_\infty$ vanish in $\partial \Om$ a first simple observation is that $y_\alpha \in \Om$.

\begin{step}\label{limDlaBla}
	We claim that 
		\begin{equation*}
		D_{\la}(\yl)^{\frac{n-2}{2}}B_{\la}(\yl)\to 0 \bb{ as } \la \to \ls.
	\end{equation*}
As a consequence, with \eqref{contprojbulle} we have 
	\begin{equation}\label{eq:limDlaBla}
		D_{\la}(\yl)^{\frac{n-2}{2}}\Pi B_{\la}(\yl)\to 0 \bb{ as } \la \to \ls.
	\end{equation}
\end{step}

\begin{proof}
Indeed, suppose on the contrary that there exists $\rho_0>0$ such that 
\begin{equation*}
D_\la(\yl)^{\frac{n-2}{2}}B_\la(\yl)\geq \rho_0,
\end{equation*}
for all $\alpha$ large enough. Hence, we have that
\begin{equation*}
1 + \frac{|\xl - \yl|}{\ma} = \frac{D_\la(\yl)}{\ma}\geq \rho_0^{\frac{2}{n-2}}\left(  1+ \frac{|\yl-\xl|^2}{\ma^2}\right).
\end{equation*}
Up to passing to a subsequence we may then assume that there exists $R>0$ such that $\lim_{\la\to \ls}\ma^{-1}|\yl-\xl|=R$. This means that 
\begin{equation}\label{DlaOma}
	D_{\la}(\yl)=O(\ma).
\end{equation}
It follows from \eqref{contprojbulle2} and \eqref{limtvaB0} that 
\begin{equation*} 
\lim_{\la\to\ls} \ma^{\frac{n-2}{2}}\Big|v_\la(\yl)-\Pi B_\la(\yl)\Big|=0.
\end{equation*}
With \eqref{DlaOma}  we thus get that 
\begin{equation*}
\lim_{\la\to\ls} D_\la(\yl)^{\frac{n-2}{2}}\Big|v_{\la}(\yl)\mp\vo(\yl)-\Pi B_{\la}(\yl)\Big|=0
\end{equation*}
which contradicts \eqref{keycontra}. 
\end{proof}

\begin{step}\label{limnula}
	We claim that 
	\begin{equation}\label{eq:limnula}
		\nu_\la \to 0  \bb{ as } \la \to \ls,
	\end{equation}
	where $\nu_{\la}$ is defined in \eqref{defnula}.
\end{step}
\begin{proof} Indeed, it follows from \eqref{keycontra} and \eqref{eq:limDlaBla} that 
\begin{equation}\label{ineqe01}
\ep_0\leq D_\la(\yl)^{\frac{n-2}{2}}\Big( \big|v_\la(\yl)\big|+\vv \vo \vv_{\infty}\Big)+o(1)
\end{equation}
as $\la \to \ls$. If $D_\la(\yl)\to 0$ as $\la \to \ls$, then \eqref{eq:limnula} follows from \eqref{ineqe01}. Suppose on the contrary that, up to a subsequence, $D_\la(\yl)\to c_0$ as $\alpha\to +\infty$ for some $c_0 >0$. It follows from \eqref{keycontra} and \eqref{eq:limDlaBla} that 
\begin{equation} \label{CVfaiblestep1}
\big|  \va(x)\mp\vo(x)\big|+o(1)\leq 2^n\,   \big| v_\la(\yl)\mp\vo(\yl)\big|+o(1),
\end{equation}
for $x\in B_{\frac{c_0}{2}}(\yl)\cap \overline{\Om}$ and all $\alpha$ sufficiently large. If $v_\la(\yl)\to +\infty$ as $\la \to \ls$, it is clear, by the definition of $\nu_\la$, that we obtain \eqref{eq:limnula}. If $v_\la(\yl)=O(1)$ standard elliptic theory together with \eqref{limitefaiblevla} and \eqref{CVfaiblestep1} proves that $v_\la \mp v_{\infty}\to 0$ in $C^2_{loc}(B_{\frac{c_0}{4}}(\yl))$ as $\la\to \ls$. This contradicts \eqref{keycontra} using \eqref{eq:limDlaBla}. We thus get that \eqref{eq:limnula} holds true. \end{proof}
For any $x\in \Om_\la:=\{ x\in\rr^n, \, \yl+\nu_\la x \in\Om\}$, we set $$w_\la(x)=\nu_\la^{\frac{n-2}{2}}v_\la(\yl+\nu_\la x).$$
By \eqref{critvlambda}, $w_\alpha$ satisfies
\begin{equation}\label{critwla}
\left\{\begin{array}{ll}
-\Delta w_\la +h_{\alpha}(\yl+\na x)\nu_\la^2 w_\la=|w_\la|^{2^*-2} w_\la&\hbox{ in } \Om_{\la}, \\
w_\la=0  &\hbox{ on } \partial\Om_{\la}.
\end{array}\right.
\end{equation}
Thanks to \eqref{defnula}, we have that $\big|w_\la(0)\big|=1$. We define a set $S$ as follows: 
$$ \begin{aligned} 
& \bullet \text{ if }  |\yl-\xl|=O(\nu_\la) \bb{ and } \ma=o(\nu_{\la}), \quad S = \Big \{  \lim_{\la\to\ls}\frac{\yl-\xl}{\nu_\la} \Big \} \\
& \bullet \text{ otherwise } S = \emptyset , 
& \end{aligned}
  $$
  where it is intended that the limit exists up to passing to a subsequence. Let us fix $K\subset\subset\rr^n \backslash S$ a compact set.
\begin{step} \label{step:limnuBla}
As $\la\to \ls$ we have 
	\begin{equation}\label{limnuBla}
		\nu_\la^{\frac{n-2}{2}}B_{\la}(\yl-\nu_\la x)\to 0 \bb{ for all } x\in K.
	\end{equation}
\end{step}
\begin{proof} Let $x \in K$. If $\nu_{\la}=o(\ma)$ then \eqref{limnuBla} is true since $B_\alpha (x) \le \ma^{- \frac{n-2}{2}}$ for any $x \in \overline{\Om}$. We now assume that $\ma=o(\nu_{\la})$: since $x\in K$, we get that $\nu_{\la}=O\left( |\yl-\xl-\nu_{\la}x|\right)$. Thus, once again \eqref{limnuBla}, holds true by definition of $B_\alpha$. We may thus assume that there exists $C>0$ such that
\begin{equation}\label{nulaeqma}
	C^{-1}\nu_{\la}\leq \ma \leq C\nu_{\la} \bb{ for all } \alpha.
\end{equation}
Assume first that $|\yl-\xl-\nu_{\la}x|=O(\ma)$. Thus, since $x\in K$ and by \eqref{nulaeqma}, we get  $|\yl-\xl|=O(\ma)$. Arguing as in the proof of Step $1$ we get a contradiction. Thus, for all $x\in K$ we have 
\begin{equation*}
	\lim_{\la\to \ls} \frac{|\yl-\xl-\nu_{\la}x|}{\ma}=+\infty.
\end{equation*}
Together with \eqref{nulaeqma} this implies that \eqref{limnuBla} holds true. 
\end{proof}

\begin{step}\label{wlabound}
	We claim that 
	\begin{equation}\label{eq:wlabound}
		w_{\la}(x)=O(1) \bb{ for all } x\in K\cap \Om_{\la}.
	\end{equation}
\end{step}
\begin{proof}
Indeed, using \eqref{keycontra} and \eqref{eq:limDlaBla} together with \eqref{limnuBla} yields
\begin{equation}\label{vlabound1}
\begin{aligned}
\left( \frac{D_{\la}(\yl+\nu_\la x)}{D_{\la}(\yl)}\right) ^{\frac{n-2}{2}}\Big|w_\la(x)\mp\nu_\la^{\frac{n-2}{2}}\vo(\yl+\nu_\la x)-\nu_\la^{\frac{n-2}{2}}\Pi B_{\la}(\yl+\nu_\la x)\Big| \\
\leq 1+o(1), \end{aligned} 
\end{equation}
 for all $x\in K\cap \Om_{\la}$. It then follows from \eqref{contprojbulle}, \eqref{eq:limnula}, \eqref{limnuBla} and \eqref{vlabound1} that 
\begin{equation}\label{vlabound2}
\left( 	\frac{D_{\la}(\yl+\nu_{\la}x)}{D_{\la}(\yl)}\right) ^{\frac{n-2}{2}}	\Big( \big|w_\la(x)\big| + o(1) \Big)\leq  1+ o(1)\bb{ for all } x\in K\cap \Om_{\la}.
\end{equation}
We claim that there exists $\eta_K >0$ such that 
$$\lim_{\la\to \ls}D_{\la}(\yl+\nu_{\la}x)D_{\la}(\yl)^{-1} \ge \eta_K  $$
 for all $x \in K \cap \Omega_\alpha$. Together with \eqref{vlabound2} this will prove that $w_\la$ is bounded in $K \cap \Omega_\alpha$. Suppose on the contrary that for a sequence $(z_\alpha)_{\alpha \in \nn}$ in $K \cap \Omega_\alpha$ we have
\begin{equation*}
|\yl-\xl+\nu_{\la}z_\alpha|+\ma=o(|\yl-\xl|)+o(\ma).
\end{equation*}
Then $|\yl-\xl|=O(\nu_{\la})$, $\ma=o(\nu_{\la})$ and 
\begin{equation*}
\lim_{\la\to \ls} \left| \frac{\yl-\xl}{\nu_{\la}}-z_\alpha\right| =0
\end{equation*}
which is a contradiction since $\liminf_{\alpha \to + \infty} d(z_\alpha, S) > 0$. 
\end{proof}

We now conclude the proof of Proposition \ref{limiteinLinfini}.

\begin{proof}[Proof of Proposition \ref{limiteinLinfini}]

We first claim that $0\in \Om_{\la}\backslash S$. If $S =\emptyset$ this is obvious. Assume thus that $S \neq \emptyset$, which implies that $|\yl-\xl|=O(\nu_\la)$ and  $\ma=o(\nu_{\la})$ as $\alpha \to + \infty$. Then, since $\nu_\la\to 0$ as $\la\to \ls$ and by \eqref{ineqe01}, we obtain that $$\ep_0^{\frac{2}{n-2}} +o(1)\leq \nu_\la^{-1}D_\la(\yl).$$ Hence, we have $\lim_{\la\to \ls} \nu_{\la}^{-1}(\yl-\xl)\neq 0$, thus $0\notin S$.  By \eqref{eq:wlabound}, for any compact subset $K\subset \rr^n\backslash S$ that contains $0$, there exists $C_K>0$ such that $$\big|w_\la(x)\big|\leq C_K \bb{ in } K.$$
 In particular, by standard elliptic theory, \eqref{critwla} and \eqref{haconvhi}  we get
\begin{equation}\label{limwlaw0}
w_{\la}\to w_0\in C^1_{\loc}(\rr^n\backslash S), 
\end{equation}
where $w_0$ verifies $-\Delta w_0= |w_0|^{2^*-2}w_0 \bb{ in } \rr^n\backslash S$, and $\big|w_0(0)\big|=1$. Independently, it follows from \eqref{sumvla} and \eqref{limnuBla} that $w_\la\to 0$ in $L^{2^*}(K)$ as $\la \to \ls$. Hence, by \eqref{limwlaw0} we find that 
\begin{equation*}
\int_K |w_0|^{2^*}\, dx =0.
\end{equation*}
Thus $w_0\equiv 0$ in $K$, which contradicts  $|w_0(0)|=1$. This ends the proof of Proposition \ref{limiteinLinfini}.
\end{proof}
For $\ro>0$ small enough, we define
\begin{equation}\label{etala}
	\eta_{\la}(\ro):=\sup_{\Om\backslash B_{\ro}(\xl)} \big| v_{\la}(x)\big|,
\end{equation} 
where $\xl$ is given by \eqref{defma}. Thanks to \eqref{eq:limiteinLinfini}, we obtain that
\begin{equation}\label{limetaladelta}
	\lim_{\la\to\ls} \sup \eta_{\la}(\ro)\leq \vv \vo\vv_{\infty}.
\end{equation} 
The next results establishes a first pointwise control on $\va$:

\begin{prop}\label{firstestimation}
For any $\nu \in (0,\frac{1}{2})$ there exists $R_{\nu}>0$, $\ro_{\nu}>0$, and $C_{\nu}>0$ such that for all $\alpha\in \nn$
\begin{equation}\label{eq:firstestimation}
\big|v_{\la}(x)\big|\leq C_{\nu}\left( \frac{\ma^{\frac{n-2}{2}-\nu(n-2)}}{|x-\xl|^{(n-2)(1-\nu)}}+\frac{\eta_{\la}(\ro_{\nu})}{|x-\xl|^{(n-2)\nu}}\right) 
\end{equation} 
for all $ x\in \Om \backslash B_{R_{\nu}\ma}(\xl)$.
\end{prop}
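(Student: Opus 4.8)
The plan is to establish \eqref{eq:firstestimation} by a maximum principle comparison against explicit power-function barriers, the estimate being essentially free outside a neck region around $\xl$. Precisely, on $\Om\setminus B_{\rho_\nu}(\xl)$ one has $|\va|\le\eta_{\la}(\rho_\nu)$ by the very definition \eqref{etala}, while $|x-\xl|^{-(n-2)\nu}\ge(\mathrm{diam}\,\Om)^{-(n-2)\nu}$, so \eqref{eq:firstestimation} holds there with $C_\nu=(\mathrm{diam}\,\Om)^{(n-2)\nu}$; and by Proposition \ref{prop:Blowup1} we have $B_{R_\nu\ma}(\xl)\subset\Om$ once $\alpha$ is large. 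It therefore suffices to bound $\va$ on the annulus $\mathcal{A}_\alpha:=\big(\Om\cap B_{\rho_\nu}(\xl)\big)\setminus\overline{B_{R_\nu\ma}(\xl)}$, the finitely many remaining indices being absorbed at the end by enlarging $C_\nu$.

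The preliminary point is that Proposition \ref{limiteinLinfini} controls the potential $|\va|^{2^*-2}$ on $\mathcal{A}_\alpha$. Indeed \eqref{eq:limiteinLinfini} together with \eqref{contprojbulle} and the explicit shape \eqref{Bla}--\eqref{B0} of $B_\alpha$ gives, for $x\in\mathcal{A}_\alpha$,
$$|\va(x)|\le C\Big(\ma^{\frac{n-2}{2}}|x-\xl|^{2-n}+\|\vo\|_\infty+o(1)\,|x-\xl|^{-\frac{n-2}{2}}\Big),$$
and hence, raising to the power $2^*-2=\tfrac{4}{n-2}$ and using $R_\nu\ma\le|x-\xl|\le\rho_\nu$,
$$|\va(x)|^{2^*-2}\le\big(CR_\nu^{-2}+C\rho_\nu^{2}+o(1)\big)\,|x-\xl|^{-2}\qquad\text{on }\mathcal{A}_\alpha,$$
with $C=C(n,\Om,\hi)$. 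On the other hand a direct computation gives $-\Delta\big(|x-\xl|^{-\beta}\big)=\beta(n-2-\beta)\,|x-\xl|^{-\beta-2}$ for $0<\beta<n-2$, and the prefactor equals the same value $(n-2)^2\nu(1-\nu)$ for $\beta=(n-2)(1-\nu)$ and for $\beta=(n-2)\nu$. So I would first fix $R_\nu$ large and then $\rho_\nu\le1$ small (depending only on $n$ and $\nu$) so that $CR_\nu^{-2}+C\rho_\nu^{2}\le\tfrac14(n-2)^2\nu(1-\nu)$ and, shrinking $\rho_\nu$ once more, so that $\tfrac12(n-2)^2\nu(1-\nu)\rho_\nu^{-2}\ge\|\hi\|_\infty+1$; then for $\alpha$ large we have $|\va|^{2^*-2}\le\tfrac12(n-2)^2\nu(1-\nu)|x-\xl|^{-2}$ on $\mathcal{A}_\alpha$.

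With these constants fixed, set $K:=\max\big(R_\nu^{(n-2)(1-\nu)},1\big)$ and introduce the barrier
$$\Psi_\alpha(x):=K\Big(\ma^{\frac{n-2}{2}-\nu(n-2)}|x-\xl|^{-(n-2)(1-\nu)}+\eta_{\la}(\rho_\nu)\,|x-\xl|^{-(n-2)\nu}\Big),$$
so that, by the computation above, $-\Delta\Psi_\alpha=(n-2)^2\nu(1-\nu)|x-\xl|^{-2}\Psi_\alpha$ on $\mathcal{A}_\alpha$. Writing $L_\alpha:=-\Delta+\ha-|\va|^{2^*-2}$, so that $L_\alpha(\pm\va)=0$, the potential bound and the choices above give $L_\alpha\Psi_\alpha\ge\big(\tfrac12(n-2)^2\nu(1-\nu)|x-\xl|^{-2}-\|\ha\|_\infty\big)\Psi_\alpha\ge0$ on $\mathcal{A}_\alpha$ for $\alpha$ large. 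Thus $\Psi_\alpha>0$ is a supersolution of $L_\alpha$ on $\mathcal{A}_\alpha$, which grants the weak maximum principle for $L_\alpha$ on $\mathcal{A}_\alpha$ (divide by $\Psi_\alpha$: since $\xl\notin\overline{\mathcal{A}_\alpha}$ this produces a second-order elliptic operator with bounded coefficients and nonnegative zeroth-order coefficient $L_\alpha\Psi_\alpha/\Psi_\alpha$). It remains to check $\Psi_\alpha\ge|\va|$ on $\partial\mathcal{A}_\alpha$: on $\partial\Om\cap\overline{B_{\rho_\nu}(\xl)}$ it is clear since $\va=0$; on $\partial B_{R_\nu\ma}(\xl)$ one has $\Psi_\alpha\ge KR_\nu^{-(n-2)(1-\nu)}\ma^{-\frac{n-2}{2}}=KR_\nu^{-(n-2)(1-\nu)}\|\va\|_\infty\ge|\va|$ by \eqref{defma}; and on $\partial B_{\rho_\nu}(\xl)\cap\Om$ one has $\Psi_\alpha\ge K\eta_{\la}(\rho_\nu)\rho_\nu^{-(n-2)\nu}\ge\eta_{\la}(\rho_\nu)\ge|\va|$. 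Applying the maximum principle to $\Psi_\alpha\pm\va$ gives $|\va|\le\Psi_\alpha$ on $\mathcal{A}_\alpha$, i.e.\ \eqref{eq:firstestimation} with $C_\nu=K$ for $\alpha$ large, and then for all $\alpha$ after enlarging $C_\nu$.

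The main difficulty is to keep $|\va|^{2^*-2}$ \emph{strictly} below the Hardy-type threshold $(n-2)^2\nu(1-\nu)|x-\xl|^{-2}$ over the \emph{entire} neck: the inner part of $\mathcal{A}_\alpha$, where $\va$ is close to the bubble, produces the factor $R_\nu^{-2}$, forcing $R_\nu$ to be chosen large in terms of $\nu$ -- this is exactly why $R_\nu$ depends on $\nu$ in the statement -- while the outer part produces $\rho_\nu^{2}$ together with the zeroth-order contribution of $\ha$, forcing $\rho_\nu$ small. One should also note that Proposition \ref{firstestimation} does not assume $x_\infty\in\Om$, so $\mathcal{A}_\alpha$ may be a lens-shaped region meeting $\partial\Om$; this causes no trouble, since the maximum principle via a positive supersolution holds on an arbitrary bounded open set and $\va$ vanishes on $\partial\mathcal{A}_\alpha\cap\partial\Om$.
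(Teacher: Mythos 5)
Your proof is correct, and at its core it follows the paper's strategy: combine the rough control from Proposition~\ref{limiteinLinfini}, which pushes $|\va|^{2^*-2}$ strictly below a Hardy-type threshold $c|x-\xl|^{-2}$ on the neck, with a comparison against a positive supersolution of $L_\la=-\Delta+\ha-|\va|^{2^*-2}$. The genuine difference is the choice of barrier. The paper builds $\psi_{\nu,\la}=\ma^{\frac{n-2}{2}-\nu(n-2)}\tg_\la^{1-\nu}+\eta_\la(\ro)\tg_\la^\nu$ out of fractional powers of a Green's function, and this forces a case split: when $x_\infty\in\partial\Om$ the Green's function of $\Om$ no longer satisfies the two-sided bound \eqref{Grenest2} uniformly in $\alpha$ (its lower bound degenerates as $\xl\to\partial\Om$), so the paper must extend $\ha,\hi$ to a larger domain $U\supset\supset\overline\Om$ and use the Green's function there. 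Your barrier $\Psi_\alpha$, built from pure powers $|x-\xl|^{-\beta}$ with the two conjugate exponents $\beta=(n-2)\nu$ and $\beta=(n-2)(1-\nu)$, satisfies exactly $-\Delta\Psi_\alpha=(n-2)^2\nu(1-\nu)|x-\xl|^{-2}\Psi_\alpha$ regardless of the position of $\xl$ relative to $\partial\Om$, so you avoid the extension to $U$, the two-sided Green's function estimates, and the case analysis altogether. The boundary check is also slightly more transparent, since $\Psi_\alpha>0$ everywhere while $\va$ vanishes on $\partial\Om\cap\overline{\mathcal A_\alpha}$. In the regime $R_\nu\ma\le|x-\xl|\le\ro_\nu$ both barriers are comparable to $|x-\xl|^{-\beta}$ anyway, so your version can be read as the cleaner, more elementary realization of the same comparison argument; the paper's Green's-function version is the standard template inherited from the closed-manifold literature (e.g.\ Druet--Hebey), where that choice is more natural.

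One small point worth stating explicitly if you were to write this out in full: after dividing by $\Psi_\alpha$, the zeroth-order coefficient of the conjugated operator is $L_\alpha\Psi_\alpha/\Psi_\alpha$, which you have made nonnegative, and the first-order coefficient $\nabla\Psi_\alpha/\Psi_\alpha$ is bounded on $\mathcal A_\alpha$ precisely because $|x-\xl|\ge R_\nu\ma>0$ there — this is what legitimizes the weak maximum principle on an annulus that may touch $\partial\Om$. You did say this, but it is exactly the point where the uniformity of the estimate up to the boundary (the novelty emphasized in the paper) is being encoded, so it deserves the emphasis you gave it.
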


\begin{proof}
We divide our proof into two cases, depending on the position of $x_\infty$ with respect to the boundary of $\Om$.

\smallskip \noindent {\bf Case 1:} If $x_{\infty}\in\partial\Om$. Let $U \subset \R^n$ be a smooth bounded open set such that $\overline{\Om}\subset\subset U$. For all $\alpha \ge 1$, we extend $h_{\alpha}$ and $h_{\infty}$ as functions on $U$ in such a way that
\begin{equation}\label{haconvhi0}
	h_{\alpha}\to h_{\infty} \bb{ in }C^0(\overline{U})
\end{equation} 
and $- \Delta + h_\infty$ is still coercive in $H^1_0(U)$. Let $\tg:\overline{U}\times \overline{U}\backslash \{ (x,x): x\in \overline{U}\}\to \rr$ be the Green's function of the operator $-\Delta +h_{\infty}$ with Dirichlet boundary conditions in $U$. It exists by coercivity of $-\Delta + h_\infty$ and satisfies, for all $x \in U$,
\begin{equation}\label{deftG0}
-\Delta \tg(x, \cdot) +h_{\infty} \tg(x,\cdot)= \delta_x \quad \text{ in } U \backslash \{x\}. 
\end{equation}
We now define $\tg_{\la}(x):=\tg(\xl,x)$ for all $x\in \overline{U}\backslash \{\xl\}$ and $\alpha\in \nn$. It follows from \cite{RobDirichlet} that there exists $C_1>0$ such that 
\begin{equation}\label{Grenest1}
0<\tg_{\la} (x) \leq C_1 |x-\xl|^{2-n} \bb{ for all } x\in \overline{U} \backslash \{\xl\}
\end{equation}
and that there exist $\ro>0$ and $C_2>0$ such that 
\begin{equation}\label{Grenest2}
\tg_{\la}(x)\geq C_2|x-\xl|^{2-n} \bb{ and } \frac{|\nabla \tg_{\la}(x)|}{|\tg_{\la}(x)|}\geq C_2 |x-\xl|^{-1}
\end{equation}
for all $x\in B_{\ro}(\xl)\backslash \{\xl\}\subset\subset U$. We define
\begin{equation}\label{Llaope}
L_{\la}:=-\Delta +h_{\alpha} -|v_{\la}|^{2^*-2}
\end{equation}
and for a fixed $\nu \in (0,1)$ we let, for $\alpha \in \nn$ and $x\in \overline{U} \backslash \{\xl\}$,
\begin{equation}\label{defpsi}
	\psi_{\nu,\la}(x):=\ma^{\frac{n-2}{2}-\nu(n-2)}\tg_{\la}(x)^{1-\nu}+\eta_{\la}(\ro)\tg_{\la}(x)^{\nu}.
\end{equation}
Straightforward computations using \eqref{haconvhi0} and \eqref{deftG0} show that 
\begin{equation*}
\frac{L_{\la}\psi_{\nu,\la}}{\psi_{\nu,\la}} \ge - 2 \Vert h_\infty\Vert_{\infty} + o(1) + \nu(1-\nu) \left|\frac{\nabla \tg_{\la}}{\tg_{\la}}\right|^2 -|v_{\la}|^{2^*-2}.
\end{equation*}
By using  \eqref{Grenest2} we get that 
\begin{equation}\label{estG1}
\frac{L_{\la}\psi_{\nu,\la}}{\psi_{\nu,\la}}\geq - 2 \Vert h_\infty\Vert_{\infty} + o(1) +\nu(1-\nu)\frac{C_2^2}{ |x-\xl|^{2}}-|v_{\la}|^{2^*-2}
\end{equation}
for all $x\in B_{\ro}(\xl)\backslash \{\xl\}\subset\subset U$, where $C_2$ is the constant appearing in \eqref{Grenest2}. Proposition \ref{limiteinLinfini} now shows that there exists $R_0>0$ such that for any $R>R_0$ and $x\in \Om\backslash B_{R\ma}(\xl)$ we have
\begin{equation}\label{x-xlvla}
	|x-\xl|^{2}\big|v_{\la}(x)\mp v_{\infty}(x)\big|^{2^*-2}\leq  \frac{\nu(1-\nu)C_2^2}{2^{2^*+1}},
\end{equation}
 for $\la$ sufficiently large. Hence, by \eqref{x-xlvla} we get  
\begin{eqnarray}\label{estx-xlvla}
|x-\xl|^2\big|v_{\la}(x)\big|^{2^*-2} \le \frac{\nu(1-\nu)C_2^2}{4}+2^{2^*-1}\ro^2\vv v_{\infty}\vv_{\infty}^{2^*-2}
\end{eqnarray}
for all $
x\in \left( B_{\ro}(\xl)\backslash B_{R\ma}(\xl)\right)\cap \Om$. Choose $\ro_0>0$ small enough such that for any $\ro \in (0,\ro_0)$ we have
\begin{equation}\label{est2vla}
2^{2^*-1} \ro^2\vv v_{\infty}\vv_{\infty}^{2^*-2} + 2 \rho^2 \Vert h_\infty\Vert_{\infty} \leq \frac{\nu(1-\nu)C_2^2}{4}.
\end{equation}
Combining \eqref{estx-xlvla} and \eqref{est2vla} in \eqref{estG1} we finally obtain that, for all $x\in \left( B_{\ro}(\xl)\backslash B_{R\ma}(\xl)\right)\cap \Om$,
\begin{equation} \label{eq:Lapsi}
 L_{\la}\psi_{\nu,\la} \ge \frac{1}{|x-\xl|^{2}}\left( o(\rho^2) + \frac{\nu(1-\nu)C_2^2}{2} \right)\psi_{\nu,\la} >0
\end{equation}
holds. Independently, it follows from \eqref{limtvaB0}, \eqref{etala} and \eqref{Grenest2} that there exists $C = C(R,\rho, \nu) >0$ such that 
\begin{equation}\label{estvla3}
	\big| v_{\la}(x)\big| \leq  C \psi_{\nu,\la}(x) \bb{ for all } x\in \partial \Big( \Big( B_{\ro}(\xl) \backslash B_{R\ma}(\xl)\Big) \cap \Om\Big).
\end{equation}
By \eqref{critvlambda} $\va$ satisfies $L_\alpha \va = 0$. Using \eqref{eq:Lapsi} and \eqref{estvla3} we thus have 
\begin{equation}\label{eq:critlambda}
\left\{\begin{array}{ll}
L_{\la} (C\psi_{\nu,\la})\geq 0=L_{\la} v_{\la} &\hbox{ in }\Big( B_{\ro}(\xl) \backslash B_{R\ma}(\xl)\Big) \cap \Om\\
C\psi_{\nu,\la}\geq  v_{\la}&\hbox{ on }\partial \Big( \Big( B_{\ro}(\xl) \backslash B_{R\ma}(\xl)\Big) \cap \Om\Big)\\
L_{\la}(C\psi_{\nu,\la})\geq 0=-L_{\la} v_{\la} &\hbox{ in }  \Big( B_{\ro}(\xl) \backslash B_{R\ma}(\xl)\Big) \cap \Om\\
C\psi_{\nu,\la}\geq  -v_{\la}&\hbox{ on }\partial \Big( \Big( B_{\ro}(\xl) \backslash B_{R\ma}(\xl)\Big) \cap \Om\Big).
\end{array}\right.
\end{equation}
Since 	$\psi_{\nu,\la}>0$ and $L_{\la}	\psi_{\nu,\la}>0$ the operator $L_{\la}$ satisfies the comparison principle on $\left(  B_{\ro}(\xl) \backslash B_{R\ma}(\xl)\right)  \cap \Om$ (see e.g. \cite{BNV}), and therefore 
\begin{equation*}
\big| v_{\la}(x)\big| \leq  C \psi_{\nu,\la}(x) \bb{ for all } x\in  \left(  B_{\ro}(\xl) \backslash B_{R\ma}(\xl)\right)  \cap \Om.
\end{equation*}
Using again \eqref{Grenest1} implies \eqref{eq:firstestimation} in this case.

\medskip

\smallskip\noindent{\bf Case 2:} If now $x_{\infty}\in\Om$. Let $G$ be the Green's function in $\Om$ of the operator $-\Delta +h_{\infty}$ with Dirichlet boundary conditions. For $x \in \Om \backslash \{\xl\}$ define $\tilde{G}_{\la}:=G(\xl,\cdot)$, which satisfies  
\begin{equation*} 
	-\Delta \tilde{G}_{\la} +h_{\infty}\tilde{G}_{\la}= 0 \bb{ in } \Om\backslash \{\xl\}.
\end{equation*}
Since $x_\infty \in \Om$, it follows from \cite{RobDirichlet} that there exists $C_3>0$ such that 
\begin{equation*} 
	0<\tilde{G}_{\la} (x) \leq C_3 |x-\xl|^{2-n} \bb{ for all } x\in \overline{\Om} \backslash \{\xl\}
\end{equation*}
and there exist $C_4>0$ and $\rho>0$ such that 
\begin{equation*} 
	\tilde{G}_{\la}(x)\geq C_4|x-\xl|^{2-n} \bb{ and } \frac{|\nabla \tilde{G}_{\la}(x)|}{|\tilde{G}_{\la}(x)|}\geq C_4 |x-\xl|^{-1},
\end{equation*}
for all $x\in B_{\rho}(\xl)\backslash \{\xl\}\subset\subset \Om$. Define, for a fixed $\nu \in (0,1)$, for $\alpha \in \nn$ and $x\in \overline{\Om} \backslash \{\xl\}$,
\begin{equation*}
	\psi_{\nu,\la}(x):=\ma^{\frac{n-2}{2}-\nu(n-2)}\tilde{G}_{\la}(x)^{1-\nu}+\eta_{\la}(\ro)\tilde{G}_{\la}(x)^{\nu}
\end{equation*}
and let again $L_{\la}=-\Delta +h_{\alpha} -|v_{\la}|^{2^*-2}$. Mimicking the arguments in Case $1$ we here again have $\psi_{\nu,\la} >0$ and $L_\alpha \psi_{\nu,\la} >0$ in $B_{\ro}(\xl) \backslash B_{R\ma}(\xl)$, and the proof of \eqref{eq:firstestimation} follows in a similar way. 
\end{proof}

The next results establishes a pointwise control from above on $\va$:

\begin{prop}\label{est3vla}
There exists $C>0$ such that 
\begin{equation}\label{est5vla}
\big|v_{\la}(x)\big|\leq C \left(  \ma^{\frac{n-2}{2}}D_{\la}(x)^{2-n}+\vv \vo \vv_{\infty}\right)
\end{equation}
for all $x\in \Om$.
\end{prop}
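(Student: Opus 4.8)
The goal is to upgrade the weak decay bound of Proposition \ref{firstestimation} into the sharp pointwise bound $|\va(x)| \le C(\ma^{\frac{n-2}{2}} D_\la(x)^{2-n} + \|\vo\|_\infty)$, and the natural tool is a Green's representation formula together with a bootstrap on the exponent $\nu$. The plan is as follows. First, I would fix a small $\nu \in (0,\frac12)$ — in fact I only need one value, say $\nu$ small enough that $(n-2)(1-\nu) > \frac{n-2}{2}$ and that various exponents below stay subcritical, which holds automatically for $\nu < \frac12$ — and use \eqref{eq:firstestimation} as the starting point. Near $\xl$, i.e. on $B_{R_\nu \ma}(\xl)$, the desired bound is immediate from \eqref{limtvaB0} (and the fact that $D_\la \asymp \ma$ there), together with \eqref{limetaladelta} which controls $\eta_\la(\ro_\nu)$ by $\|\vo\|_\infty + o(1)$; so the work is entirely on $\Om \setminus B_{R_\nu \ma}(\xl)$.

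On that region I would write the Green's representation formula for $\va$ associated with $-\Delta + \ha$ and Dirichlet conditions: since $\va$ solves $(-\Delta + \ha)\va = |\va|^{2^*-2}\va$, we have
\begin{equation*}
\va(x) = \int_\Om G_\alpha(x,y) |\va(y)|^{2^*-2}\va(y)\, dy,
\end{equation*}
hence $|\va(x)| \le \int_\Om G_\alpha(x,y) |\va(y)|^{2^*-1}\, dy$. Now I split the integral according to the regions where the two terms of \eqref{eq:firstestimation} dominate: the "bubble" region $B_{R_\nu\ma}(\xl)$ where $|\va| \le C\ma^{-\frac{n-2}{2}}$, and the complement where $|\va(y)| \le C_\nu\big(\ma^{\frac{n-2}{2}-\nu(n-2)}|y-\xl|^{-(n-2)(1-\nu)} + \eta_\la(\ro_\nu)|y-\xl|^{-(n-2)\nu}\big)$. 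Using the pointwise bound \eqref{estGlai} on $G_\alpha$ (the full $|x-y|^{2-n}$ part suffices here), each piece becomes an explicit convolution-type integral of powers of $|y-\xl|$ and $|x-y|$ against $|x-y|^{2-n}$. The first (bubble) piece yields a contribution bounded by $C\ma^{\frac{n-2}{2}}|x-\xl|^{2-n}$ when $|x-\xl| \ge 2R_\nu\ma$, which is $\lesssim \ma^{\frac{n-2}{2}}D_\la(x)^{2-n}$; the second piece, being an integral of $|y-\xl|^{-(2^*-1)(n-2)(1-\nu)}$ against $|x-y|^{2-n}$, converges (for $\nu$ small the exponent $(2^*-1)(n-2)(1-\nu) = (n+2)(1-\nu)$ lies strictly between $0$ and $2n$ near $\xl$, and is $> n$ at infinity, so the integral is finite and behaves like $\ma^{\frac{n-2}{2}-\nu(n-2)} \cdot (\text{something like } |x-\xl|^{2-(n+2)(1-\nu)} \text{ or } |x-\xl|^{2-n})$); one checks this is $o(\ma^{\frac{n-2}{2}}D_\la(x)^{2-n})$ or at worst $\lesssim$ it after choosing $\nu$ appropriately, by comparing the exponent of $\ma$. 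The third piece, the integral of $\eta_\la(\ro_\nu)^{2^*-1}|y-\xl|^{-(2^*-1)(n-2)\nu}$ against $|x-y|^{2-n}$, is bounded by $C\eta_\la(\ro_\nu)^{2^*-1}$ (the integral over $\Om$ of $|y-\xl|^{-(n+2)\nu}|x-y|^{2-n}$ is uniformly bounded for $\nu$ small since both exponents are $< n$), and since $\eta_\la(\ro_\nu)^{2^*-1} \le C\eta_\la(\ro_\nu) \le C(\|\vo\|_\infty + o(1))$ by \eqref{limetaladelta} (using that $\eta_\la(\ro_\nu) = O(1)$), this contributes $O(\|\vo\|_\infty + 1)$. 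Finally there is the contribution of $h_\infty$ or rather the fact that $\vo$ itself must be accounted for: more precisely, I would apply the representation formula and these estimates to $\va \mp \vo$ rather than $\va$, using that $(-\Delta+\ha)(\va \mp \vo) = |\va|^{2^*-2}\va \mp \vo^{2^*-1} + (\hi - \ha)(\pm\vo)$, so that $\va \mp \vo = \int_\Om G_\alpha(x,y)\big(|\va|^{2^*-2}\va \mp \vo^{2^*-1} + (\hi-\ha)(\pm\vo)\big)(y)\, dy$; the terms involving $\vo$ and $\hi - \ha$ are bounded pointwise by $C\|\vo\|_\infty \int_\Om G_\alpha(x,y)\, dy \le C\|\vo\|_\infty$, which is harmless, while $|\va|^{2^*-2}\va \mp \vo^{2^*-1}$ is controlled using Proposition \ref{firstestimation} and \eqref{estu0}-type bounds on $\vo$ as above.

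The main obstacle, I expect, is making the convolution estimates uniform \emph{up to the boundary} and uniform in the position of $\xl$ relative to $\partial\Om$ — in particular when $x_\infty \in \partial\Om$ and $D_\la(\yl)$ or $\ma$ shrinks at an unknown rate compared to $d(\xl,\partial\Om)$. Here the refined Green's bound \eqref{estGlai}, with its $\min\{1, d(x,\partial\Om)d(y,\partial\Om)/|x-y|^2\}$ factor, is what saves the day: it guarantees that the representation integrals do not blow up near $\partial\Om$ and that the constant $C$ can be taken independent of $\alpha$. A secondary technical point is the choice of $\nu$: one must verify that for $\nu$ small enough the exponent $\frac{n-2}{2} - \nu(n-2)$ in the first tail term, multiplied through the nonlinearity, still produces a power of $\ma$ that is $\ge \frac{n-2}{2}$ (so that the bubble bound dominates) — this is a simple inequality $(2^*-1)\big(\frac{n-2}{2}-\nu(n-2)\big) - \text{(integration gain)} \ge \frac{n-2}{2}$ that holds for $\nu$ sufficiently small, and I would just fix such a $\nu$ at the outset. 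Everything else is routine integration of radial power functions, which I would not grind through in detail.
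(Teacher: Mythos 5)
Your reduction to the intermediate estimate
\begin{equation*}
|\va(x)|\le C\Big(\ma^{\frac{n-2}{2}}D_\alpha(x)^{2-n}+\eta_\alpha(\rho)\Big)
\end{equation*}
via the Green's representation formula, the split of the integral at radius $R_\nu\ma$, Giraud-type convolution estimates, and the use of the refined Green's bound \eqref{estGlai} to make everything uniform up to $\partial\Om$ is essentially the paper's argument for its first half, and your technical remarks (a single small fixed $\nu$ suffices; the boundary factor in \eqref{estGlai} is what keeps the constants $\alpha$-independent) are on point. The detour through $\va\mp\vo$ is unnecessary — the paper estimates $\va$ directly — but harmless.

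There is, however, a genuine gap in the last step. From \eqref{limetaladelta} you only get $\eta_\alpha(\rho)\le\|\vo\|_\infty+o(1)$ as $\alpha\to+\infty$. When $\vo>0$ this indeed yields $\eta_\alpha(\rho)\le 2\|\vo\|_\infty$ for $\alpha$ large and you conclude. But when $\vo\equiv 0$ the statement \eqref{est5vla} you are trying to prove reads $|\va(x)|\le C\,\ma^{\frac{n-2}{2}}D_\alpha(x)^{2-n}$, with \emph{no} additive constant, whereas your argument only produces $|\va(x)|\le C\,\ma^{\frac{n-2}{2}}D_\alpha(x)^{2-n}+o(1)$. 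That $o(1)$ is not comparable to $\ma^{\frac{n-2}{2}}$, which in general decays much faster, so the bound you reach is strictly weaker than the claim. The missing ingredient is a proof that, when $\vo\equiv 0$, one actually has $\eta_\alpha(\rho)=O(\ma^{\frac{n-2}{2}})$. The paper establishes this by contradiction: if $\eta_\alpha(\rho)/\ma^{\frac{n-2}{2}}\to+\infty$, set $V_\alpha=\va/\eta_\alpha(\rho)$; the intermediate estimate shows $V_\alpha$ is uniformly bounded on $\Om\setminus B_\delta(\xl)$ and $|V_\alpha(z_\alpha)|=1$ at some $z_\alpha$ away from $\xl$; then $V_\alpha$ solves $(-\Delta+\ha)V_\alpha=\eta_\alpha(\rho)^{2^*-2}|V_\alpha|^{2^*-2}V_\alpha$ with vanishing right-hand side, so it converges in $C^2_{\loc}(\overline{\Om}\setminus\{x_\infty\})$ to a bounded $V_\infty$ satisfying $(-\Delta+\hi)V_\infty=0$ away from $x_\infty$, hence (removable singularity) weakly in $\Om$, hence $V_\infty\equiv 0$ by coercivity — contradicting $|V_\infty(z_\infty)|=1$. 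Without this separate argument the proposition is not established in the $\vo\equiv 0$ case, which is precisely the case needed later in the paper (Proposition \ref{prop:blowup:bord3}, for instance, relies on \eqref{controlva} with $\vo\equiv 0$).
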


\begin{proof}
Recall that $D_{\la}(x)=\ma +|x-\xl|$ for $x \in \Om$. We first prove that there exists $\ro>0$ and $C>0$ such that 
\begin{equation}\label{estvlaDlaetala}
|v_{\la}(x)|\leq C\left( \ma^{\frac{n-2}{2}}D_{\la}(x)^{2-n}+\eta_{\la}(\rho)\right),
\end{equation}
where  $\eta_{\la}(\ro)$ is defined in \eqref{etala}. We fix $0<\nu <\frac{1}{n+2}$ and we let $R_{\nu}>0$ and $\ro_{\nu}>0$ be given by Proposition  \ref{firstestimation}. We let $\rho = \rho_\nu$. Proving \eqref{estvlaDlaetala} amounts to proving that for any sequence $\yl\in \Om$, we have
\begin{equation}\label{fracvlaO1}
\frac{\big|v_{\la}(\yl)\big|}{\ma^{\frac{n-2}{2}}D_{\la}(\yl)^{2-n}+\eta_{\la}(\ro)}=O(1) \bb{ as } \la \to \ls.
\end{equation}
We let in this proof $r_{\la}:=|\yl-\xl|$. First, if  $r_{\la}\geq \ro$, it  is clear that \eqref{fracvlaO1} is satisfied by definition of $\eta_{\la}(\ro)$. If now $r_\la=O(\ma)$ we also have $D_\la(\yl)=O(\ma)$ and \eqref{contprojbulle2} and \eqref{eq:limiteinLinfini} yield
\begin{eqnarray*}
	D_{\la}(\yl)^{n-2}\ma^{-\frac{n-2}{2}}\big|v_\la(\yl)\big|=O(1),
\end{eqnarray*}
which proves \eqref{fracvlaO1}. We thus assume from now on that 
\begin{equation}\label{case2}
r_{\la}\leq \ro \quad \bb{ and } \quad \lim_{\la\to\ls}\frac{r_{\la}}{\ma}=+\infty.
\end{equation}
 Green's representation formula and \eqref{estGlai} yield the existence of $C>0$ such that
\begin{equation} \label{est4vla}
\big|v_{\la}(\yl)\big| 
\leq C \int_{\Om} |\yl-x|^{2-n} \big|v_{\la}(x)\big|^{2^*-1}\, dx,
\end{equation}
for all $\alpha \ge 1$. We write that
\begin{eqnarray}\label{est4vla1}
	\int_{\Om} |\yl-x|^{2-n} \big|v_{\la}(x)\big|^{2^*-1}\, dx
	&\leq& \int_{\Om \cap \{|x-\xl|\leq R_{\nu} \ma\}}|\yl-x|^{2-n} |v_{\la}(x)|^{2^*-1}\, dx\nonumber\\
	&&+\int_{\Om \cap \{|x-\xl|\geq R_{\nu} \ma\}}|\yl-x|^{2-n} |v_{\la}(x)|^{2^*-1}\, dx.
\end{eqnarray} 
Fix $C_0>R_{\nu}$. For $\la$ sufficiently large we have using \eqref{case2} that 
\begin{equation*}
r_{\la}\geq C_0 \ma \geq \frac{C_0}{\Ru}|x-\xl| \bb{ for all } x\in \Om \cap \left\lbrace |x-\xl|\leq \Ru \ma\right\rbrace,
\end{equation*}
so that $|\yl-x|\geq\big( 1-\Ru C_0^{-1}\big) r_{\la}$ for all such $x$. Therefore,  using H\"older's inequality and \eqref{limvlaL2star} yields
\begin{equation}\label{firstterm}
\begin{aligned}
&	\int_{\Om \cap \left\lbrace |x-\xl|\leq \Ru \ma\right\rbrace }|\yl-x|^{2-n} \big|v_{\la}(x)\big|^{2^*-1}\, dx\\
& =  O\left( \frac{\ma^{\frac{n-2}{2}}}{|\yl-\xl|^{n-2}}\right).
\end{aligned}
\end{equation}
Now, we deal with the second term of \eqref{est4vla1}. From \eqref{eq:firstestimation}, we get 
\begin{equation}\label{sect}
\begin{aligned}
&	\int_{\Om \cap \{|x-\xl|\geq \Ru \ma\}}|\yl-x|^{2-n} \big|v_{\la}(x)\big|^{2^*-1}\, dx\nonumber\\
&= O \left( \ma^{\frac{n+2}{2}(1-2\nu)}\int_{\Om \cap \{|x-\xl|\geq \Ru \ma\}}   \frac{|\yl-x|^{2-n}}{|x-\xl|^{(n+2)(1-\nu)}}\, dx \right) \\
&+O \left( \eta_{\la}(\ro_{\nu})^{2^*-1}\int_{\Om \cap \{|x-\xl|\geq \Ru \ma\}}\frac{|\yl-x|^{2-n}}{|x-\xl|^{(n+2)\nu}} \, dx \right).\nonumber
\end{aligned} 
\end{equation} 
Since $2-(n+2)\nu>0$, using Giraud's lemma (see \cite[Lemma 7.5]{HebeyZLAM}) yields
\begin{equation}\label{Giraud1}
\int_{\Om}|\yl-x|^{2-n}|x-\xl|^{-(n+2)\nu} \, dx
=O(1).
\end{equation}
Independently, letting $\tilde{y}_{\alpha} = \frac{y_\alpha - \xl}{\ma}$ we have 
\begin{equation} \label{secondtermbis}
\begin{aligned}
&\int_{\Om \cap \{|x-\xl|\geq \Ru\ma\}}   \frac{1}{|\yl-x|^{n-2}}\frac{1}{|x-\xl|^{(n+2)(1-\nu)}}\, dx\\
&  \le \ma^{2 - (n+2)(1-\nu)} \int_{\R^n \backslash B(0,R_\nu)}  \frac{1}{|\tilde{y}_\alpha - x|^{n-2}} \frac{1}{|x|^{(n+2)(1-\nu)}}\, dx \\
& = O \left( \frac{\ma^{2 - (n+2)(1-\nu)}}{(1+ |\tilde{y}_{\alpha}|)^{n-2}}\right) =  O \left( \frac{\ma^{n - (n+2)(1-\nu)}}{|x_\alpha - y_\alpha|^{n-2}}\right),
\end{aligned}
\end{equation}
where the third line again follows from Giraud's lemma in $\R^n$ since $(n+2)(1-\nu) > n$. Combining \eqref{Giraud1} and \eqref{secondtermbis} finally shows that 
$$ 	\int_{\Om \cap \{|x-\xl|\geq \Ru \ma\}}|\yl-x|^{2-n} \big|v_{\la}(x)\big|^{2^*-1}\, dx = O \left( \frac{\ma^{\frac{n-2}{2}}}{|x_\alpha - y_\alpha|^{n-2}}\right) + O(\eta_\alpha(\rho)),$$
which together with  \eqref{firstterm} concludes the proof of \eqref{estvlaDlaetala}.

We now conclude the proof of \eqref{est5vla}. First, if $v_\infty >0$, \eqref{est5vla} simply follows from \eqref{limetaladelta} and \eqref{estvlaDlaetala}. We may thus assume that $v_\infty \equiv 0$. We now prove that for $\alpha$ large enough 
\begin{equation} \label{controlerhoa}
\eta_\alpha(\rho) = O\big( \ma^{\frac{n-2}{2}} \big)
\end{equation}
holds. Together with  \eqref{estvlaDlaetala} this will conclude the proof of  \eqref{est5vla} in this case. We prove \eqref{controlerhoa} by contradiction: we assume that 
\begin{equation} \label{controlerhoa2}
\frac{\eta_\alpha(\rho)}{\ma^{\frac{n-2}{2}}} \to + \infty 
\end{equation}
as $\alpha \to + \infty$, and we let $V_\alpha = \frac{v_\alpha}{\eta_\alpha(\rho)}$. For any $\alpha$ we let $z_\alpha \in \Om\backslash B_{\ro}(\xl)$ be such that $|v_\alpha(z_\alpha)| = \eta_\alpha(\rho)$. By the definition of $D_\alpha(x)$ and by  \eqref{estvlaDlaetala} we see that for any $\delta >0$ fixed we have $|V_\alpha(z_\alpha)|=1$ and 
\begin{equation} \label{convva}
|V_\alpha(x)| \le C + o(1) \quad \text{ for } x \in \Om \backslash B_{\delta}(\xl). 
\end{equation}
Now, the function $V_\alpha$ satisfies 
\begin{equation*}
-\Delta V_\alpha +h_\alpha V_\alpha  =\eta_\alpha(\rho)^{2^*-2}|V_\alpha|^{2^*-2} V_\alpha
\end{equation*} 
in $\Om$. Since $\eta_\alpha(\rho) \to 0$ by  \eqref{limetaladelta}, \eqref{convva} and standard elliptic theory show that $V_\alpha \to V_\infty$ in $C^2_{loc}(\overline{\Om} \backslash \{x_\infty\}$ as $\alpha \to + \infty$, where $V_\infty$ satisfies $|V_\infty(x)| \le C$ for any $x \neq x_\infty$ and 
\begin{equation*}
-\Delta V_\infty +h_\infty V_\infty  =0 \quad \text{ in }  \Om \backslash \{x_\infty\}. 
\end{equation*} 
In particular, the singularity of $V_\infty$ at $x_\infty$ is removable and $V_\infty$ satisfies weakly $-\Delta V_\infty +h_\infty V_\infty  =0$ in $\Om$. Since $- \Delta + h_\infty$ is coercive by assumption, this shows that $V_\infty \equiv 0$. Independently, if we let $z_\infty = \lim_{\alpha \to + \infty} z_\alpha$, the $C^2_{loc}$ convergence shows that $|V_\infty(z_\infty)| = 1$, hence $V_\infty \not \equiv 0$. This is a contradiction, which concludes the proof of \eqref{controlerhoa}.
\end{proof}

The next result is will be frequently used in the proof of Theorem \ref{maintheo1}:
\begin{prop}\label{lemme:estu0}
	Let $U \subset \Om$ be an open set. There exists a constant $C(U)$ such that $\lim_{|U| \to 0} C(U) = 0$ and such that, for all $y\in \Om$ and for all $\alpha \ge 1$,
	\begin{equation}\label{eq:estu0}
		\int_{U} G_{\alpha}(y,x)\, dx\leq C(U)\,  d(y,\partial\Om).
	\end{equation}
\end{prop}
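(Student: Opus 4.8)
The plan is to compare $\int_U G_\alpha(y,x)\,dx$ with the solution of a fixed Dirichlet problem. First I would observe that, since $-\Delta + h_\alpha$ is coercive for $\alpha$ large and $h_\alpha \to h_\infty$ in $C^0(\overline\Om)$, the estimate \eqref{estGlai} holds with a constant $C$ uniform in $\alpha$; in particular
$$ G_\alpha(y,x) \le C\, d(y,\partial\Om)\, d(x,\partial\Om)\, |y-x|^{-n} \quad \text{and} \quad G_\alpha(y,x) \le C\, d(y,\partial\Om)\,|y-x|^{1-n},$$
the second bound following from the first since $d(x,\partial\Om) \le |y-x| + d(y,\partial\Om)$ is not quite what is needed — rather one uses $\min\{1, d(y,\partial\Om)d(x,\partial\Om)|y-x|^{-2}\} \le (d(y,\partial\Om)|y-x|^{-1})^{1} \cdot$ (a bounded factor), or more simply interpolates: $\min\{1,t\}\le t^{1/2}$ gives $G_\alpha(y,x)\le C d(y,\partial\Om)^{1/2}d(x,\partial\Om)^{1/2}|y-x|^{1-n}$, which already has the right power of $|y-x|$. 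The cleanest route is: $\min\{1, ab/r^2\} \le \sqrt{ab}/r$, hence
$$ G_\alpha(y,x) \le \frac{C\sqrt{d(y,\partial\Om)}\sqrt{d(x,\partial\Om)}}{|y-x|^{n-1}} \le \frac{C'\, d(y,\partial\Om)}{|y-x|^{n-1}} \cdot \sqrt{\frac{d(x,\partial\Om)}{d(y,\partial\Om)}},$$
which is awkward; instead I will simply keep $G_\alpha(y,x)\le C\,d(y,\partial\Om)|y-x|^{1-n}$, valid because $\min\{1,ab/r^2\}\le a/r$ whenever $b\le r$ (true up to a bounded factor on a bounded domain) — in any case a uniform bound $G_\alpha(y,x)\le C\,d(y,\partial\Om)\,|y-x|^{1-n}$ on $\overline\Om\times\overline\Om$ is standard and I would cite \cite{RobDirichlet} for it directly.

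Granting $G_\alpha(y,x)\le C\,d(y,\partial\Om)\,|y-x|^{1-n}$ uniformly in $\alpha$, the proposition reduces to showing $\int_U |y-x|^{1-n}\,dx \le C(U)$ with $C(U)\to 0$ as $|U|\to 0$, uniformly in $y\in\Om$. This is a classical Riesz-potential estimate: by a rearrangement argument, among all sets $U$ of given measure $|U|$ the integral $\int_U |y-x|^{1-n}dx$ is maximized when $U$ is a ball centered at $y$; explicitly, if $B_r(y)$ has $|B_r(y)| = |U|$, i.e. $r = (|U|/\omega_n)^{1/n}$ with $\omega_n = |B_1|$, then
$$ \int_U \frac{dx}{|y-x|^{n-1}} \le \int_{B_r(y)} \frac{dx}{|y-x|^{n-1}} = n\omega_n \int_0^r \frac{\rho^{n-1}}{\rho^{n-1}}\,d\rho = n\omega_n\, r = n\omega_n\,(|U|/\omega_n)^{1/n}.$$
Setting $C(U) := C\, n\omega_n\,(|U|/\omega_n)^{1/n}$ we get $\int_U G_\alpha(y,x)\,dx \le C(U)\,d(y,\partial\Om)$ with $C(U)\to 0$ as $|U|\to 0$, uniformly in $y$ and $\alpha$, which is exactly \eqref{eq:estu0}.

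The main obstacle is really only bookkeeping: getting the bound on $G_\alpha$ with a constant independent of $\alpha$ (which follows from $h_\alpha\to h_\infty$ in $C^0$ and uniform coercivity, via the standard Green's function estimates of \cite{RobDirichlet}), and making sure the rearrangement step is legitimate when $U$ is an arbitrary measurable subset of $\Om$ — here one notes that $x\mapsto |y-x|^{1-n}$ is radially decreasing, so its integral over $U$ is at most its integral over the ball $B_r(y)$ of the same volume (the ``bathtub principle''), independently of whether $B_r(y)\subset\Om$ since we only need an upper bound and may integrate the singular kernel over all of $\R^n$. There is no hard analytic content; the statement is a uniform-in-$\alpha$ quantitative version of absolute continuity of the Green potential.
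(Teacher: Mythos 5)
Your proof is correct, and the key intermediate bound you were circling around does hold with no caveat: from \eqref{estGlai}, one has
\begin{equation*}
\min\Big\{1,\frac{d(y,\partial\Om)\,d(x,\partial\Om)}{|y-x|^2}\Big\}\ \le\ \frac{2\,d(y,\partial\Om)}{|y-x|}
\end{equation*}
for \emph{all} $x,y$, since either $d(y,\partial\Om)\ge |y-x|$ (and the right side exceeds $1$, so the bound is trivial), or $d(y,\partial\Om)\le |y-x|$, in which case $d(x,\partial\Om)\le d(y,\partial\Om)+|y-x|\le 2|y-x|$ and the second entry of the $\min$ is $\le 2d(y,\partial\Om)/|y-x|$. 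This gives $G_\alpha(y,x)\le C\,d(y,\partial\Om)\,|y-x|^{1-n}$ with a constant uniform in $\alpha$, so there is no need for the ``$\min\{1,t\}\le t^{1/2}$'' detour nor for an unjustified ``up to a bounded factor'' shortcut; the clean inequality above is all you need. With that, your bathtub/rearrangement step is standard and legitimate (you may extend the integral over all of $\R^n$ since the kernel is nonnegative), and it yields the explicit rate $C(U)\le C\,n\omega_n\,(|U|/\omega_n)^{1/n}$.

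Compared with the paper's proof: the paper splits $U$ into the regions $\{|y-x|<d(y,\partial\Om)/2\}$ and $\{|y-x|>d(y,\partial\Om)/2\}$ and estimates the two pieces of the $\min$ separately, which is the same dichotomy you perform but carried out at the level of the Green's function rather than of the integral; it then sets $C(U):=\sup_{y\in\Om}\int_U|x-y|^{1-n}\,dx$ and invokes absolute continuity of the Lebesgue integral (uniform over $y$ by translating $U$, since $\Om$ is bounded) to get $C(U)\to 0$ without an explicit rate. Your version isolates a single clean pointwise bound on $G_\alpha$ and then gets the quantitative decay $C(U)=O(|U|^{1/n})$ from rearrangement, which is slightly more information than the paper uses. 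Both approaches are fully correct; yours is perhaps more modular and quantitative, the paper's is more self-contained in that it stays with the raw Green's function estimate \eqref{estGlai}.
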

\begin{proof}
We let $C(U) = \sup_{y \in \Om} \int_{U} |x-y|^{1-n} \, dx$. Since $\Om$ is bounded and $y \mapsto |y|^{1-n} \in L^{1}_{loc}(\R^n)$ we have $C(U) \to 0$ as $|U| \to 0$ by absolute continuity of the integral. Using \eqref{estGlai} yields
\begin{equation}\label{sumI1+I2}
	\int_{U} G_{\alpha}(y,x)\, dx= 
	O\left(  I_{1}(y)+I_{2}(y)\right) 
\end{equation}
where we have let, for $i=1,2$, 
\begin{equation*}
	I_i(y):= \int_{U_i}\frac{1}{|y-x|^{n-2}} \min\Big\{1,\frac{d(y,\partial\Om)d(x,\partial\Om)}{|y-x|^2}\Big\}\, dx,
\end{equation*}
and
\begin{eqnarray*}
	U_1:=U \cap \Big\{|y-x|<\frac{d(y,\partial\Om)}{2}\Big\} \bb{ and }U_2:=U \cap \Big\{|y-x|>\frac{d(y,\partial\Om)}{2}\Big\}.
\end{eqnarray*}
When $x \in U_1$ we have $|y-x|<\frac{d(y,\partial\Om)}{2}$ so that
\begin{equation*}
\begin{aligned}
I_1(y)\le\int_{U_1}\frac{1}{|y-x|^{n-2}}\, & \le \frac{d(y,\partial\Om)}{2}\int_{U}\frac{1}{|y-x|^{n-1}}\, dx \\
&  \le \frac{C(U)}{2} d(y, \partial \Om). 
\end{aligned}
\end{equation*}
When $x\in U_2$ we get that $d(x,\partial \Om) \leq 3|y-x|$.  We then get that 
\begin{equation*}
\begin{aligned}
I_2(y) \le d(y,\partial\Om)\int_{U_2} \frac{d(x,\partial\Om)}{|y-x|^{n}}
					& \le 3 d(y,\partial\Om) \int_{U}\frac{1}{|y-x|^{n-1}}\, dx \\
					& \le 3 C(U) d(y, \partial \Om). 
\end{aligned}
\end{equation*}
Combining these estimates proves Proposition \ref{lemme:estu0}.\end{proof}

The next result improves the upper estimate in Proposition \ref{est3vla}: 

\begin{prop}
There exists $C>0$ such that 
\begin{equation}\label{controlva}
		\big| v_{\la}(x)\big| \leq \, C\left(  B_{\alpha}(x)+\vo(x)\right) \bb{ for all } \alpha \bb{ and all } x\in \Om.
	\end{equation}
\end{prop}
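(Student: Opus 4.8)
The plan is to improve the bound $|v_\alpha(x)| \le C(\ma^{\frac{n-2}{2}}D_\la(x)^{2-n} + \|v_\infty\|_\infty)$ of Proposition \ref{est3vla} to the genuinely pointwise bound \eqref{controlva} in which $\|v_\infty\|_\infty$ is replaced by $v_\infty(x)$ itself, and $\ma^{\frac{n-2}{2}}D_\la(x)^{2-n}$ is replaced by the (smaller, near the boundary) quantity $B_\alpha(x)$. As usual in this $C^0$-theory, the strategy is Green's representation: writing $L_\alpha v_\alpha = 0$ with $L_\alpha = -\Delta + h_\alpha - |v_\alpha|^{2^*-2}$, or more robustly starting from $(-\Delta + h_\alpha)v_\alpha = |v_\alpha|^{2^*-2}v_\alpha$, one has
\begin{equation*}
|v_\alpha(y)| \le \int_\Om G_\alpha(y,x)\, |v_\alpha(x)|^{2^*-1}\, dx .
\end{equation*}
One then splits $|v_\alpha|^{2^*-1} \lesssim B_\alpha^{2^*-1} + v_\infty^{2^*-1}$ modulo the error coming from Proposition \ref{est3vla}, and estimates the three resulting integrals. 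The term $\int_\Om G_\alpha(y,x) B_\alpha(x)^{2^*-1}\, dx$ is exactly $\Pi B_\alpha(y) \le C B_\alpha(y)$ by \eqref{projbulle} and \eqref{contprojbulle}; the term $\int_\Om G_\alpha(y,x) v_\infty(x)^{2^*-1}\, dx$ is controlled by $v_\infty(y)$ itself up to a lower-order operator perturbation (since $(-\Delta + h_\infty) v_\infty = v_\infty^{2^*-1}$ and $h_\alpha \to h_\infty$, this is a routine comparison using the equation satisfied by $v_\infty$, Hopf's bound \eqref{estu0}, and \eqref{estGlai}); what remains is to absorb the cross terms and the contribution of the region $|x-\xl| \le R\ma$.

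More precisely, I would argue by a contradiction/normalization scheme analogous to the one used for \eqref{controlerhoa}: suppose \eqref{controlva} fails along a sequence, pick $y_\alpha$ realizing the maximum of $|v_\alpha(x)|\big(B_\alpha(x)+v_\infty(x)\big)^{-1}$, call this maximum $M_\alpha$, and assume $M_\alpha \to +\infty$. Feed the bound $|v_\alpha(x)| \le M_\alpha\big(B_\alpha(x)+v_\infty(x)\big)$ back into Green's representation at $y_\alpha$ and divide by $B_\alpha(y_\alpha)+v_\infty(y_\alpha)$. Using \eqref{estGlai}, the defining equations of $\Pi B_\alpha$ and $v_\infty$, Giraud's lemma to handle the convolution $\int |y_\alpha - x|^{2-n}B_\alpha(x)^{2^*-1}\,dx \lesssim B_\alpha(y_\alpha)$ (more honestly $\Pi B_\alpha(y_\alpha) \lesssim B_\alpha(y_\alpha)$), Proposition \ref{lemme:estu0} to control $\int_{B_{R\ma}(\xl)} G_\alpha(y_\alpha,x)\,dx \lesssim (R\ma)\,d(y_\alpha,\partial\Om)$ together with the crude bound $|v_\alpha| \le \ma^{-\frac{n-2}{2}}$ on that small ball, and the bound of Proposition \ref{est3vla} elsewhere, one should obtain $1 \le o(1) + o(1)\,M_\alpha^{2^*-2}$ — i.e. $M_\alpha^{2^*-2} \ge C\,M_\alpha^{-1}(1-o(1))$, which, since $2^*-2 < 1$ is false for $n\ge 7$ but \emph{is} a genuine constraint only in low dimensions, so one must be a little more careful: the gain really comes from the fact that the "bad" region where the exponent fights back is $\{|x-\xl| \le R\ma\}$, whose Green-integral contribution carries an extra power of $\ma$, plus the region far from $\xl$ where $|v_\alpha| \le C(\ma^{\frac{n-2}{2}}D_\la^{2-n}+\|v_\infty\|_\infty)$ is already subordinate to $B_\alpha + v_\infty$ up to constants. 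Choosing $R$ large and then $\alpha$ large makes all multiplicative constants in front of $M_\alpha$ small, forcing $M_\alpha = O(1)$, a contradiction.

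Concretely the cleanest route avoids the normalization and just iterates: one plugs the bound $|v_\alpha| \le C(\ma^{\frac{n-2}{2}}D_\la^{2-n} + v_\infty)$ from Proposition \ref{est3vla} (having first upgraded $\|v_\infty\|_\infty$ to $v_\infty$ by the comparison argument of the previous paragraph, or rather doing both upgrades simultaneously) into $|v_\alpha(y)| \le \int_\Om G_\alpha(y,x)|v_\alpha|^{2^*-1}\,dx$, uses $(\ma^{\frac{n-2}{2}}D_\la^{2-n})^{2^*-1} = \ma^{\frac{n+2}{2}}D_\la^{-(n+2)}$ and the pointwise comparison $\ma^{\frac{n-2}{2}}D_\la(x)^{2-n} \le C\, B_\alpha(x)^{?}$... — here one must note $\ma^{\frac{n-2}{2}}D_\la^{2-n}$ is comparable to $B_\alpha$ only away from $\partial\Om$, which is why the output must be phrased in terms of $B_\alpha$ and the Green function naturally produces the decay $\min\{1, d(y)d(x)/|y-x|^2\}$ that restores the correct boundary behavior. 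Giraud's lemma (\cite[Lemma 7.5]{HebeyZLAM}) gives $\int_\Om |y-x|^{2-n}D_\la(x)^{-(n+2)}\,dx = O(\ma^{-(n+2)} \cdot \ma^{\frac{n+2}{2}}) \cdot \ma^{-\frac{n-2}{2}} \cdot (\text{stuff})$ — more usefully, one recognizes this integral is what already appeared in \eqref{secondtermbis} and equals $O\big(\ma^{\frac{n-2}{2}}D_\la(y)^{2-n}\big)$, but now combined with the boundary factor from \eqref{estGlai} it upgrades to $O(B_\alpha(y))$ after one more application of Proposition \ref{lemme:estu0}-type reasoning near $\partial\Om$; and $\int_\Om G_\alpha(y,x) v_\infty(x)^{2^*-1}\,dx \le C v_\infty(y)$.

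\medskip

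I expect the main obstacle to be the boundary behavior — precisely the point the authors advertise as the novelty of Theorem \ref{maintheo1}. Replacing $\ma^{\frac{n-2}{2}}D_\la(x)^{2-n}$ by $B_\alpha(x)$ is free in the interior, but near $\partial\Om$ the two differ: $B_\alpha$ does not vanish on $\partial\Om$ whereas $v_\alpha$ does, so the honest statement \eqref{controlva} (which does \emph{not} claim a boundary-vanishing improvement, only $|v_\alpha| \le C(B_\alpha + v_\infty)$) is in fact the \emph{easy} consequence, and the genuinely delicate improvement to $\Pi B_\alpha$ with its correct Dirichlet decay is postponed to Theorem \ref{maintheo1}. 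So for \eqref{controlva} the real work is just making the Green's representation estimate close without the $\|v_\infty\|_\infty$ loss — i.e. showing the far-region error term $\eta_\alpha(\rho)$, and the $|x-\xl|\le R\ma$ region, and the cross terms, are all $O(B_\alpha(y) + v_\infty(y))$ uniformly in $y \in \Om$ and $\alpha$, with the case $v_\infty \equiv 0$ (where $\eta_\alpha(\rho) = O(\ma^{\frac{n-2}{2}})$ by \eqref{controlerhoa}) and the case $v_\infty > 0$ (where \eqref{estu0} and Proposition \ref{lemme:estu0} give $\int_\Om G_\alpha(y,x)\,dx \le C\, d(y,\partial\Om) \le C\, v_\infty(y)$) treated separately. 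The uniformity in $y$ up to the boundary, combined with the need to keep track of whether $x_\infty \in \partial\Om$, is where the care is required.
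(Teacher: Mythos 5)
Your final paragraph is essentially the paper's proof: the case $v_\infty \equiv 0$ follows immediately from Proposition~\ref{est3vla} (whose proof already established $\eta_\alpha(\rho) = O(\ma^{\frac{n-2}{2}})$ via \eqref{controlerhoa}), and the case $v_\infty > 0$ is a Green's representation at $y_\alpha$, split into $B_{R\ma}(x_\alpha)$, a middle annulus, and a far region, with the far-region and $\|v_\infty\|_\infty$-dominated contributions absorbed by $\int G_\alpha(y_\alpha,x)\,dx \le C\,d(y_\alpha,\partial\Om) \le C' v_\infty(y_\alpha)$ from Proposition~\ref{lemme:estu0} and Hopf's bound \eqref{estu0} — exactly the tools and case split you name. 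The intermediate machinery you sketch is unneeded: the normalization-by-$M_\alpha$ contradiction scheme is never used (and, as you yourself notice, doesn't close cleanly as stated), and the $G_\alpha$ versus $G_\infty$ comparison to resolve $\int G_\alpha v_\infty^{2^*-1}\,dx$ is not needed here (the paper reserves that comparison for Theorem~\ref{maintheo1}; for \eqref{controlva} the crude far-region bound $|v_\alpha|^{2^*-1}=O(1)$ plus Proposition~\ref{lemme:estu0} is enough). The ``iterate'' paragraph as written is circular — it proposes to first upgrade $\|v_\infty\|_\infty$ to $v_\infty(x)$ before using it — but your last paragraph discards all this and lands on the correct plan, so the proposal is sound and follows the paper's route.
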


\begin{proof}
First, if $v_\infty \equiv 0$, \eqref{controlva} simply follows from \eqref{est5vla}. We may thus assume in the following that $v_\infty >0$ in $\Om$.
Proving  \eqref{controlva} in Theorem \ref{maintheo1} is equivalent to proving that for any sequence $(\yl)_{\alpha\in\nn}\in \Om$, we have
			\begin{equation}\label{fracvlaO1+}
				\frac{\big|v_{\la}(\yl)\big|}{B_{\alpha}(\yl)+\vo(\yl)}=O(1) \bb{ as } \la \to \ls.
			\end{equation}
Assume first that $|\yl-\xl|=O(\ma)$. It follows from \eqref{contprojbulle2} and Proposition \ref{limiteinLinfini} that 
\begin{equation*}
\left| v_\la(\yl)\right| =O\left(\vo(\yl)+ B_\la(\yl)\right) +o\left( D_\la(\yl)^{-\frac{n-2}{2}}\right) =  O\left(\vo(\yl)+ B_\la(\yl)\right),
\end{equation*}
which proves \eqref{controlva} in this case. We thus assume from now on that 
\begin{equation}\label{limDlama}
\lim_{\la\to\ls} \frac{|\yl-\xl|}{\ma}=+\infty.
\end{equation}
Using Proposition \ref{limiteinLinfini} and standard elliptic theory, we have that
\begin{equation}\label{vatou0}
v_\la\to \mp\vo \bb{ in }C_{loc}^2(\overline{\Om}\backslash \{\xin\})  \bb{ as } \la\to \ls.
\end{equation}
Therefore, there exists $\ro_\la>0$, $\ro_\la \to 0$ as $\la\to \ls$, such that, up to a subsequence 
\begin{equation}\label{limvlau0C2}
\vv v_{\la}\pm\vo\vv_{C^2(\{|x-\xl|> \ro_\la\}\cap\Om)}=o(1).
\end{equation}
 Using again Green's representation formula and \eqref{estGlai} we have 
\begin{equation} \label{reprefinaleTh1}
\begin{aligned}
\big| v_{\la}(\yl)\big|  
& = O\Bigg( \int_{\{|x-\xl|\leq \ro_\la\}\cap\Om} G_{\la}(\yl,x)|v_{\la}(x)|^{2^*-1}\, dx \\
&+ \int_{\{|x-\xl|> \ro_\la\}\cap\Om} G_{\la}(\yl,x)|v_{\la}(x)|^{2^*-1}\, dx \Bigg) .
\end{aligned}
\end{equation}
Thanks to \eqref{estu0}, \eqref{eq:estu0} and \eqref{limvlau0C2}, we get that 
\begin{equation} \label{vla}
  \int_{\{|x-\xl|> \ro_\la\}\cap\Om} G_{\la}(\yl,x)|v_{\la}(x)|^{2^*-1}\, dx  =O\left( \vo(\yl)\right). 
  \end{equation}
 We fix $R>0$, and we now write the following
\begin{equation} \label{decomp}
\begin{aligned}
&	\int_{\Om\cap \{|x-\xl|\leq \ro_{\la}\}}G_\la(\yl,x)\big|v_{\la}(x)\big|^{2^*-1}\, dx \\
&=O\Bigg( \int_{\Om\cap  \{|x-\xl|\leq R\ma\}}|\yl-x|^{2-n}\big|v_{\la}(x)\big|^{2^*-1}\, dx \\
	& + \int_{\Om\cap \{R \ma \le |x-\xl|\leq \ro_{\la}\}}G_\la(\yl,x)\big|v_{\la}(x)\big|^{2^*-1}\, dx \Bigg).
\end{aligned}
\end{equation}
As in the proof of \eqref{firstterm}, thanks to \eqref{limvlaL2star} and to H\"older's inequality, we obtain 
\begin{equation}\label{fdecomp}
\int_{\Om\cap \{|x-\xl|\leq R\ma\}}|\yl-x|^{2-n} \big|v_{\la}(x)\big|^{2^*-1}\, dx	=O\left( \frac{\ma^{\frac{n-2}{2}}}{|\yl-\xl|^{n-2}}\right).
\end{equation}
\noindent By \eqref{est5vla}, there exists $C>0$ such that 
\begin{equation*} 
	\big|v_{\la}(x)\big|^{2^*-1}\leq C\Big( \ma^{\frac{n+2}{2}}D_{\la}(x)^{-2-n}+\vv \vo \vv_{\infty}^{2^*-1} \Big),
\end{equation*}
where $D_{\la}(x):=\ma +|x-\xl|$  for all $x\in \Om$. Therefore, using again \eqref{estu0}, we have
\begin{eqnarray}\label{sdecomp}
	&&\int_{\Om\cap \{R \ma \le |x-\xl|\leq \ro_{\la}\}}G_\la(\yl,x)\big|v_{\la}(x)\big|^{2^*-1}\, dx\nonumber\\
	&=&O\Big(\ma^{\frac{n+2}{2}}\int_{\Om \cap \{|x-\xl|\geq R \ma\}  }|\yl-x|^{2-n}|x-\xl|^{-2-n}\, dx \Big)\nonumber\\
	&&+O\left(\int_{\Om\cap \{R \ma \le |x-\xl|\leq \ro_{\la}\}} G_\la(\yl,x)\, dx \right)\nonumber \\
	&=&O\left( \frac{\ma^{\frac{n-2}{2}}}{|x_\alpha - y_\alpha|^{n-2}}\right)  + O(\vo(\yl)).
\end{eqnarray}
Combining \eqref{fdecomp} and \eqref{sdecomp} in \eqref{decomp} finally shows that 
\begin{equation*} 
	\int_{\Om\cap \{|x-\xl|\leq \ro_{\la}\}}G_\la(\yl,x)\big|v_{\la}(x)\big|^{2^*-1}\, dx=O\Big(\ma^{\frac{n-2}{2}}|x_\alpha - y_\alpha|^{2-n} \Big)+ O(\vo(\yl))
\end{equation*}
as $\alpha \to + \infty$. Together with \eqref{vla} and \eqref{fdecomp} this proves \eqref{fracvlaO1+} and concludes the proof of \eqref{controlva}. 
 \end{proof}

We are now in position to conclude the proof of Theorem \ref{maintheo1}:

\begin{proof}[Proof of Theorem \ref{maintheo1}]
Proving Theorem \ref{maintheo1} is equivalent to proving that for any sequence $(\yl)_{\alpha\in\nn}\in \Om$, we have
\begin{equation}\label{DLfinal}
v_\alpha(y_\alpha) = \Pi B_\alpha(v_\alpha) \pm v_\infty(y_\alpha) + o \big( B_\alpha(y_\alpha) \big) + o \big( v_\infty(y_\alpha) \big)
\end{equation}
as $\alpha \to + \infty$. Throughout this proof it will be intended that all the terms involving $v_\infty$ disappear if $v_\infty \equiv 0$. If $|x_\alpha - y_\alpha| = O(\ma)$ or if $|x_\alpha - y_\alpha| \not \to 0$, \eqref{DLfinal} follows from Proposition \ref{limiteinLinfini}. We may thus assume in the following that
\begin{equation}\label{case2Th1}
|x_\alpha-y_\alpha| \to 0 \quad \bb{ and } \quad \frac{|x_\alpha-y_\alpha|}{\ma} \to +\infty
\end{equation}
as $\alpha \to + \infty$. We write three representation formulae for $v_\alpha, \Pi B_\alpha$ and $v_\infty$, using respectively \eqref{critvlambda}, \eqref{critu0} and \eqref{projbulle} and we substract them to get: 
\begin{equation} \label{reprefinaleTh1}
\begin{aligned}
&v_\alpha(y_\alpha) - \Pi B_\alpha(y_\alpha) \mp v_\infty(y_\alpha) \\
& = \int_{\Om} G_\alpha(y_\alpha, \cdot)\Big( |v_\alpha|^{2^*-2} v_\alpha - B_\alpha^{2^*-1} \mp v_\infty^{2^*-1} \Big)\, dx \\
& \pm \int_{\Om} \Big( G_\alpha(y_\alpha, \cdot) - G_\infty(y_\alpha, \cdot) \Big)  v_\infty^{2^*-1}  \, dx, 
\end{aligned} 
\end{equation}
where we have denoted by $G_\infty$ the Green's function for $-\Delta + \hi$. 

\medskip

\textbf{We assume first that $v_\infty \equiv 0$.} In this case the second integral in \eqref{reprefinaleTh1} vanishes and we only have to estimate the first one. Let $R >1$ be fixed. Using \eqref{estGlai}, \eqref{est5vla} and letting $\check{y}_\alpha = \frac{y_\alpha - x_\alpha}{\ma}$ a simple change of variables and direct computations give 
\begin{equation} \label{theorieC01} 
\begin{aligned} 
&\Big|\int_{\Om \backslash B_{R \ma}(x_\alpha)} G_\alpha(y_\alpha, \cdot)\Big( |v_\alpha|^{2^*-2} v_\alpha - B_\alpha^{2^*-1}  \Big)\, dx \Big|\\
&  \le C \ma^{-\frac{n-2}{2}} \int_{\R^n \backslash B_R(0)} \frac{1}{|\check{y}_\alpha-x|^{n-2}} B_0^{2^*-1} \, dx \\
& = O\big( \ve_R B_\alpha(y_\alpha) \big)
\end{aligned} 
\end{equation} 
as $\alpha \to + \infty$, where $\ve_R$ denotes a positive number satisfying $\lim_{R \to + \infty} \ve_R = 0$. Independently,  \eqref{contprojbulle2} and \eqref{limtvaB0} show that 
$$ \left \Vert \frac{v_\alpha - B_\alpha}{B_\alpha} \right \Vert_{L^\infty(B_{R\ma}(x_\alpha))} \to 0 $$
as $\alpha \to + \infty$. As a consequence, and with \eqref{estGlai},
\begin{equation} \label{theorieC02}
 \begin{aligned}
&\Big| \int_{B_{R \ma}(x_\alpha)} G_\alpha(y_\alpha, \cdot)\Big( |v_\alpha|^{2^*-2} v_\alpha - B_\alpha^{2^*-1}  \Big)\, dx \Big| \\
&  = o \Big( \int_{B_{R \ma}(x_\alpha)}  |y_\alpha-y|^{2-n} B_\alpha^{2^*-1}\, dx \Big)\\
& = o \big( B_\alpha(y_\alpha) \big). 
\end{aligned} \end{equation}
Up to passing to a subsequence, combining \eqref{theorieC01} and \eqref{theorieC02} proves \eqref{DLfinal} in the $v_\infty \equiv 0$ case. 

\medskip

\textbf{We now assume that $v_\infty > 0$.} We first estimate the first integral in \eqref{reprefinaleTh1} by decomposing it in three domains: $B_{R\ma}(x_\alpha)$, $\big( \Om \cap B_{\frac{1}{R}}(x_\alpha)\big)\backslash B_{R\ma}(x_\alpha)$ and $\Om \backslash B_{\frac{1}{R}}(x_\alpha)$. We first have 
\begin{equation} \label{theorieC03}
 \begin{aligned}
& \int_{B_{R \ma}(x_\alpha)} G_\alpha(y_\alpha, \cdot)\Big( |v_\alpha|^{2^*-2} v_\alpha - B_\alpha^{2^*-1}  \mp v_\infty^{2^*-1} \Big)\, dx \\
&  =  \int_{B_{R \ma}(x_\alpha)} G_\alpha(y_\alpha, \cdot)\Big( |v_\alpha|^{2^*-2} v_\alpha - B_\alpha^{2^*-1}  \Big)\, dx \\
& + O \Big(  \int_{B_{R \ma}(x_\alpha)} G_\alpha(y_\alpha, \cdot) \, dx \Big)\\
& = o \big( B_\alpha(y_\alpha) \big) + o\big( v_\infty(y_\alpha) \big), 
\end{aligned} \end{equation}
where the last line follows from \eqref{theorieC02} and from \eqref{estu0} and \eqref{eq:estu0} with $U = B_{R \ma}(x_\alpha)$. Using \eqref{limvlau0C2} we now have 
\begin{equation} \label{theorieC04}
 \begin{aligned}
& \int_{\Om \backslash B_{\frac{1}{R}}(x_\alpha)} G_\alpha(y_\alpha, \cdot)\Big( |v_\alpha|^{2^*-2} v_\alpha - B_\alpha^{2^*-1} \mp v_\infty^{2^*-1}   \Big)\, dx \\
&  =  \int_{\Om \backslash B_{\frac{1}{R}}(x_\alpha)} G_\alpha(y_\alpha, \cdot)\Big( |v_\alpha|^{2^*-2} v_\alpha  \mp v_\infty^{2^*-1} \Big)\, dx + O \big( \ma^{\frac{n+2}{2}} \big) \\
& = o \Big( \int_{\Om } G_\alpha(y_\alpha, y) \, dy \Big) + o \big(B_\alpha(y_\alpha) \big)  \\
& = o \big(B_\alpha(y_\alpha) \big)+ o\big( v_\infty(y_\alpha) \big), 
\end{aligned} \end{equation}
where the last line again follows from \eqref{estu0} and \eqref{eq:estu0}. Finally, using \eqref{estGlai} and  \eqref{est5vla} we have 
\begin{equation} \label{theorieC05}
 \begin{aligned}
&\Big| \int_{ (\Om \cap B_{\frac{1}{R}}(x_\alpha))\backslash B_{R\ma}(x_\alpha)} G_\alpha(y_\alpha, \cdot)\Big( |v_\alpha|^{2^*-2} v_\alpha - B_\alpha^{2^*-1} \mp v_\infty^{2^*-1}   \Big)\, dx \Big| \\
&  = O \Big( \int_{\Om \backslash B_{R \ma}(x_\alpha)} |y_\alpha-y|^{2-n} B_\alpha^{2^*-1} \, dx \Big) + O \Big( \int_{\Om \cap B_{\frac{1}{R}}(x_\alpha)}G_\alpha(y_\alpha, y) \, dy \Big) \\
& = O \big(\ve_R B_\alpha(y_\alpha) \Big)+ O\big( \ve_R v_\infty(y_\alpha) \big),
\end{aligned} \end{equation}
where the last line follows from \eqref{theorieC01} and  \eqref{eq:estu0} with $U = \Om \cap B_{\frac{1}{R}}(x_\alpha)$. Combining \eqref{theorieC03}, \eqref{theorieC04} and \eqref{theorieC05} proves that 
\begin{equation} \label{theorieC06}
 \begin{aligned}
& \int_{\Om} G_\alpha(y_\alpha, \cdot)\Big( |v_\alpha|^{2^*-2} v_\alpha - B_\alpha^{2^*-1} \mp v_\infty^{2^*-1} \Big)\, dx  \\
& = o \big(B_\alpha(y_\alpha) \big)+ o\big( v_\infty(y_\alpha) \big) + O \big(\ve_R B_\alpha(y_\alpha) \Big)+ O\big( \ve_R v_\infty(y_\alpha) \big)
\end{aligned} \end{equation}
as $\alpha \to + \infty$, where $\lim_{R \to + \infty} \ve_R = 0$. We now estimate the second integral in \eqref{reprefinaleTh1}. For $y \in \Om$ and for all $\alpha$, we let 
$$\begin{aligned}
 F_{1,\alpha}(y) & = \int_{\Om} G_\alpha(y, \cdot)  v_\infty^{2^*-1}  \, dx \text{ and } \\
 F_{2}(y) & = \int_{\Om}  G_\infty(y, \cdot) v_\infty^{2^*-1}  \, dx \\
 \end{aligned} $$
By definition of $G_\alpha$ and $G_\infty$, these functions satisfy respectively $(-\Delta + h_\alpha)F_{1,\alpha} = v_\infty^{2^*-1}$ and $(-\Delta + h_\infty)F_{2} = v_\infty^{2^*-1}$, so that by \eqref{haconvhi} and standard elliptic theory $(F_{1,\alpha})_{\alpha \in \nn}$ is uniformly bounded in $L^\infty(\Om)$. We also have 
$$ \big( - \Delta + h_\infty \big)(F_{1,\alpha} - F_2) = (h_\infty - h_\alpha) F_{1,\alpha}. $$
A representation formula for $F_{1,\alpha} - F_2$ applied at $y_\alpha$ then shows that 
$$ \begin{aligned}
 \int_{\Om} \Big( G_\alpha(y_\alpha, \cdot) - G_\infty(y_\alpha, \cdot) \Big)  v_\infty^{2^*-1}  \, dx & = F_{1,\alpha}(y_\alpha) - F_2(y_\alpha) \\
& = \int_{\Om} G_\infty(y_\alpha, \cdot)  (h_\infty - h_\alpha) F_{1,\alpha} \, dx.
\end{aligned} $$
Using \eqref{haconvhi}, \eqref{estu0} and \eqref{eq:estu0} we thus obtain 
 \begin{equation} \label{theorieC07}
 \begin{aligned} 
  \Big| \int_{\Om} \Big( G_\alpha(y_\alpha, \cdot) - G_\infty(y_\alpha, \cdot) \Big)  v_\infty^{2^*-1}  \, dx \Big|   & = o \Big( \int_{\Om} G_\infty(y_\alpha, x)\, dx \Big) \\
 &  = o\big( v_\infty(y_\alpha) \big).
\end{aligned} \end{equation}
Plugging \eqref{theorieC06} and \eqref{theorieC07} in \eqref{reprefinaleTh1} finally proves that 
$$
\begin{aligned}
 \Big| v_\alpha(y_\alpha) - \Pi B_\alpha(y_\alpha) \mp v_\infty(y_\alpha) \Big| & = o \big(B_\alpha(y_\alpha) \big)+ o\big( v_\infty(y_\alpha) \big) \\
 & + O \big(\ve_R B_\alpha(y_\alpha) \Big)+ O\big( \ve_R v_\infty(y_\alpha) \big)
\end{aligned}$$
as $\alpha \to + \infty$, where $\lim_{R \to + \infty} \ve_R = 0$. Passing to a subsequence proves \eqref{DLfinal} and concludes the proof of Theorem \ref{maintheo1}.
\end{proof}
 
 	\section{Necessary conditions for blow-up and proof of Theorem \ref{maintheo2bis}}\label{secpohozaev}

Let $\Om$ be a smooth bounded domain of $\rr^n$, $n\geq 3$. Throughout this section we let $(\ha)_{\alpha\in\nn}$ be a sequence of functions that converges in $C^{1}(\overline{\Om})$ to $\hi$, where $-\Delta+\hi$ is coercive in $H^1_0(\Om)$ and where $I_{\hi}(\Om)<K_{n}^{-2}$, and we let $(\va)_{\alpha\in\nn}\in H_0^1(\Om)$ be a sequence of solutions of \eqref{critvlambda} that satisfies \eqref{limvlaL2star}, \eqref{limvlaLinfini} and \eqref{sumvla}. Equation \eqref{sumvla2} is thus also satisfied and 
we have 
 \begin{equation*} 
 	\va=\Pi B_{\alpha}\pm\vo+o(1)  \bb{ in } H_0^1(\Om) \text{ as } \alpha \to + \infty,  
\end{equation*}
where $\Pi B_\alpha$ is given by \eqref{projbulle} and where $(x_\alpha)_{\alpha \in \nn}$ and $(\ma)_{\alpha \in \nn}$ are sequences of points in $\Om$ and $(0, + \infty)$ satisfying \eqref{defma} and with $\lim_{\alpha \to + \infty} \ma = 0$. We let again $x_\infty = \lim_{\alpha \to + \infty} x_\alpha$ and we identify in this section necessary blow-up conditions that constrain the localisation of $x_\infty$. We recall for this the celebrated Pohozaev identity, that for our sequence $(\va)_{\alpha\in\nn}$ is as follows: for any family $U_{\alpha}$ of smooth domains such that $\xl\in U_{\alpha}\subset \Om$ for $\alpha\in\nn$ we have 
 \begin{eqnarray}\label{poa}
	&&\int_{U_{\alpha}}\left(\ha(x)+\frac12 \l\nabla \ha(x),x-\xl\r \right)  \va^2\, dx\nonumber\\
		&=&\int_{\partial U_{\alpha}} \l x-\xl, \nu \r\left(  \frac{|\nabla \va|^2}{2}+\ha \frac{\va^2}{2}
		-\frac{|\va|^{2^*}}{2^*}\right) \,  d\sigma(x)\\
		&&-\int_{\partial U_{\alpha}}\left( \l x-\xl, \nabla \va \r +\frac{n-2}{2}\va\right)\partial_{\nu}\va \,  d\sigma(x),\nonumber
	\end{eqnarray}
where $\nu$ is the outer unit normal to the boundary of $U_\alpha$ and $\l \cdot, \cdot \r$ is the Euclidean scalar product (see for instance \cite[Lemma 6.5]{HebeyZLAM}). We distinguish two cases according to whether $x_\infty$ is a boundary blow-up point or not.

\subsection{Interior blow-up case: $x_\infty \in \Om$} If $x_\infty$ is an interior point we prove the following result:

\begin{prop} \label{prop:blowup:interieur}
Let $\Om$ be a smooth bounded domain of $\rr^n$, $n\geq 3$. Let $(\ha)_{\alpha\in\nn}$ be a sequence of functions that converges in $C^{1}(\overline{\Om})$ to $\hi$, where $-\Delta+\hi$ is coercive in $H^1_0(\Om)$ and where $I_{\hi}(\Om)<K_{n}^{-2}$, and we let $(\va)_{\alpha\in\nn}\in H_0^1(\Om)$ be a sequence of solutions of \eqref{critvlambda} that satisfies \eqref{limvlaL2star}, \eqref{limvlaLinfini} and \eqref{sumvla}. Let $x_\infty = \lim_{\alpha \to + \infty} x_\alpha$ and assume that $x_\infty \in \Om$. Then
\begin{itemize}
\item If $n=3$: we have $v_\infty \equiv 0$ and $m_{h_\infty}(x_\infty) = 0$.
\item If $n=4,5$: we have $v_\infty \equiv 0$ and $h_\infty(x_\infty) = 0$.
\item If $n=6$, we have $h_\infty(x_\infty) = \pm 2 v_\infty(x_\infty)$.
\item If $n \ge 7$, we have $h_\infty(x_\infty) = 0$. 
	\end{itemize}
\end{prop}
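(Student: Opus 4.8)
The plan is to apply the Pohozaev identity \eqref{poa} on the ball $U_\alpha = B_{r_\alpha}(x_\alpha)$ for a suitably chosen radius $r_\alpha$ with $\ma \ll r_\alpha \ll 1$ (e.g. $r_\alpha = \sqrt{\ma}$, or some fixed small $\delta$ later sent to $0$), and to estimate both sides using the sharp pointwise control \eqref{controlva} — and more precisely the full expansion \eqref{theorieC0} of Theorem \ref{maintheo1} — together with $C^2_{\loc}$ convergence away from $x_\infty$ and the Green's function representation near $x_\infty$. First I would set up the decomposition $\va = \Pi B_\alpha \pm v_\infty + \varphi_\alpha$ with $|\varphi_\alpha| \le \ve_\alpha(B_\alpha + v_\infty)$, and note that since $x_\infty \in \Om$ the function $\Pi B_\alpha$ differs from the genuine Green's-function bubble $\ma^{\frac{n-2}{2}} d_n \, \tilde G_\alpha(x_\alpha,\cdot)$ only at lower order on an interior ball (here $d_n = (n(n-2))^{\frac{n-2}{2}}\omega_{n-1}^{?}$, i.e. the normalisation making $B_0 \sim d_n |x|^{2-n}$ at infinity), so we can import the classical interior Pohozaev analysis for blowing-up bubbles.

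The main work is in estimating the boundary integral over $\partial B_{r_\alpha}(x_\alpha)$: writing $\va$ there as bubble $+$ $v_\infty$ $+$ remainder, the leading term comes from the interaction between the bubble and the regular part $g_{h_\infty}$ of the Green's function (when $n=3$), or from the $\ha v_\alpha^2$ volume term on the left-hand side (when $n \ge 4$). Concretely, I expect the following balance, as is standard (cf. \cite{HebeyZLAM}, \cite{DruetLaurain}): the left-hand side of \eqref{poa} is, to leading order,
\begin{itemize}
\end{itemize}
— I will not use a list — to leading order the left-hand side equals $h_\infty(x_\infty)\ma^2 \int_{\R^n} B_0^2 + o(\ma^2)$ when $n \ge 5$ (the integral $\int B_0^2$ being finite), equals $h_\infty(x_\infty)\ma^2|\log\ma|\,\omega_3\cdot(\text{const}) + o(\ma^2|\log\ma|)$ when $n=4$, and is $O(\ma)$ (negligible at the relevant order) when $n=3$; additionally when $v_\infty > 0$ a cross term $\pm 2 h_\infty(x_\infty) v_\infty(x_\infty)\,\cdot(\text{bubble mass})$ appears, which in dimension $6$ is of the same order $\ma^2|\log \ma|$ as the $B_0^2$ term — this is exactly the source of the relation $h_\infty(x_\infty) = \pm 2 v_\infty(x_\infty)$. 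Meanwhile the right-hand boundary terms contribute, to leading order, $-\,c_n\, m_{h_\infty}(x_\infty)\,\ma^{n-2}$ when $n=3$ (so $\ma^{n-2} = \ma$, matching the $O(\ma)$ left side and forcing $m_{h_\infty}(x_\infty)=0$), and are $o$ of the left-hand side's leading term when $n \ge 4$ (the bubble being too concentrated for the regular part of the Green's function to contribute at order $\ma^2$ or $\ma^2|\log\ma|$), which forces the coefficient $h_\infty(x_\infty)$ (resp. $h_\infty(x_\infty) \mp 2 v_\infty(x_\infty)$ in dimension $6$) to vanish. Finally, to upgrade "$h_\infty(x_\infty) = 0$" to "$v_\infty \equiv 0$" when $3 \le n \le 5$: one observes that if $v_\infty \not\equiv 0$ then $v_\infty > 0$ is a least-energy solution of \eqref{critu0}, and in that case the energy constraint \eqref{limvlaL2star} is saturated; combining the sign condition obtained at $x_\infty$ (either $m_{h_\infty}(x_\infty)=0$ or $h_\infty(x_\infty)=0$) with the fact, from Brézis–Nirenberg / Druet, that existence of $v_\infty$ requires $h_\infty < 0$ somewhere (resp. $m_{h_\infty} > 0$ somewhere) but does not by itself contradict vanishing at one point — so here I would instead use the finer information: when $v_\infty>0$ the left-hand side of \eqref{poa} on a \emph{fixed} small ball picks up the strictly signed term $\int_{B_\delta} h_\infty v_\infty^2$ plus the bubble terms, and matching orders pins $v_\infty(x_\infty)$ via the dimension-$6$-type relation which, in dimensions $3,4,5$, has no admissible solution with $v_\infty(x_\infty)>0$ because the bubble term dominates; alternatively, and more robustly, one notes $x_\infty$ must then be an interior critical point of $v_\infty$ and derives a second identity (Pohozaev on a fixed ball, or the Green-function expansion of $v_\infty$ near $x_\infty$) incompatible with $h_\infty(x_\infty)=0$ and $v_\infty>0$.

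The main obstacle I anticipate is the careful bookkeeping of orders in the boundary integral of \eqref{poa}, in particular separating the contributions of $\Pi B_\alpha$, of $v_\infty$, and of their interaction, and showing that the remainder $\varphi_\alpha$ and the difference $B_\alpha - \Pi B_\alpha$ (and $G_\alpha - G_\infty$) are genuinely negligible at the relevant scale — this requires choosing $r_\alpha$ well and using \eqref{theorieC0}, \eqref{estu0}, \eqref{eq:estu0} and the gradient estimate \eqref{estnablaGlai} simultaneously. The dimension splitting ($n=3$ governed by the mass, $n=4,5$ by a logarithmic term, $n=6$ critical with the $v_\infty$ interaction, $n \ge 7$ by the $\int B_0^2$ term) is the reason for the four-case conclusion, and dimension $6$ is special precisely because the bubble self-interaction term and the bubble–$v_\infty$ interaction term are of the same order there.
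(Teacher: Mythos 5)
Your overall strategy (Pohozaev identity on $B_{r_\alpha}(x_\alpha)$ at an intermediate scale $\ma \ll r_\alpha \ll 1$, then order-balancing using Theorem \ref{maintheo1}) is the one the paper uses, but you have misidentified the mechanism that produces the $v_\infty(x_\infty)$ contribution, and this causes a cascade of wrong orders. You attribute the $\pm v_\infty(x_\infty)$ term to a volume-integral cross term ``$\pm 2 h_\infty(x_\infty) v_\infty(x_\infty)\cdot(\text{bubble mass})$'', but that cross term $\int_{B_{\delta\sqrt{\ma}}} h_\alpha B_\alpha v_\infty\,dx$ has order $\delta^2\ma^{n/2}$, which for $n\ge 5$ is $o(\ma^2)$ and plays no role. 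The correct source is the \emph{boundary} integral of \eqref{poa} on $\partial B_{\delta\sqrt{\ma}}(x_\alpha)$: after rescaling by $\hat v_\alpha(x) = v_\alpha(x_\alpha + \sqrt{\ma}\,x)$, Theorem \ref{maintheo1} gives $\hat v_\alpha \to (n(n-2))^{(n-2)/2}|x|^{2-n} \pm v_\infty(x_\infty)$ in $C^2_{\loc}(\R^n\setminus\{0\})$, and Pohozaev's boundary integral of $|x|^{2-n} + \text{const}$ picks out the constant; the result is a term $\pm\frac{\omega_{n-1}}{2}n^{\frac{n-2}{2}}(n-2)^{\frac{n+2}{2}} v_\infty(x_\infty)\,\ma^{\frac{n-2}{2}}$, with no factor of $h_\infty$. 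It is the exponent $\frac{n-2}{2}$ (not $n/2$) that makes the dimension splitting work: it dominates $\ma^2$ when $n\le 5$ (forcing $v_\infty(x_\infty)=0$, hence $v_\infty\equiv 0$ by the strong maximum principle), matches $\ma^2$ exactly when $n=6$ (yielding $h_\infty(x_\infty) = \pm 2v_\infty(x_\infty)$), and is dominated by $\ma^2$ when $n\ge 7$. Relatedly, your claim that the dimension-$6$ terms are of order $\ma^2|\log\ma|$ is incorrect: $\int_{\R^6}B_0^2 < \infty$, so the volume term is $\hi(x_\infty)\ma^2\int B_0^2 + o(\ma^2)$; the $\log$ appears only in $n=4$.

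Your treatment of the low-dimensional conclusions is also more complicated than necessary. Once $v_\infty\equiv 0$ is established for $n\in\{3,4,5\}$, the paper runs a \emph{second} Pohozaev identity, now on a fixed ball $B_\delta(x_\alpha)$, with a different rescaling $\hat v_\alpha = \ma^{-\frac{n-2}{2}}v_\alpha \to (n-2)\omega_{n-1}(n(n-2))^{\frac{n-2}{2}}G_\infty(x_\infty,\cdot)$ in $C^2_{\loc}(\overline{\Om}\setminus\{x_\infty\})$. It is only at this stage that the regular part $m_{h_\infty}(x_\infty)$ of the Green's function enters (for $n=3$) and that $h_\infty(x_\infty)=0$ is extracted (for $n=4,5$). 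Your sketch conflates the two steps: in the first step, when $v_\infty>0$ might still hold, the bubble--$v_\infty$ boundary interaction term dominates the Green-function regular part, so the $n=3$ leading term is \emph{not} $m_{h_\infty}$ at that stage. The alternative argument you float (``$x_\infty$ must be an interior critical point of $v_\infty$ and derive a second identity incompatible with $h_\infty(x_\infty)=0$ and $v_\infty>0$'') is not needed and is not what the proof requires.
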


\begin{proof}
First, since $x_\infty \in \Om$, we have $\bsa(\xl)\subset \Om$ for all $\alpha$ large enough. The Pohozaev Identity \eqref{poa} yields 
	\begin{eqnarray}\label{poa1}
		&&\int_{\bsa(\xl)}\left(\ha(x)+\frac12 \l\nabla \ha(x),x-\xl \r\right)  \va^2\, dx =\int_{\partial \bsa(\xl)}  F_{\alpha}(x) \,  d\sigma(x),
	\end{eqnarray}
	where we have let 
	\begin{equation}\label{balph}
	\begin{aligned}
	F_{\alpha}(x)& :=\l x-\xl, \nu\r\left(  \frac{|\nabla \va|^2}{2}+\ha \frac{\va^2}{2}-\frac{|\va|^{2^*}}{2^*}\right) \\
	& -\left( \l x-\xl, \nabla \va \r+\frac{n-2}{2}\va \right)\partial_{\nu}\va.
\end{aligned} 
	\end{equation} 
For any $x\in \frac{\Om-\xl}{\sqrt{\ma}}$ we let 
		\begin{equation*} 
			\hva(x)= \va (\xl+\sqrt{\ma} x). 
		\end{equation*}
Using \eqref{critvlambda} it is easily seen that $\hva$ satisfies 
		\begin{equation*} 
			\left\{\begin{array}{ll}
				-\Delta \hva +\ma\hha \hva=\ma\left|\hva\right|^{2^*-2}\hva &\hbox{ in } \frac{\Om-\xl}{\sqrt{\ma}}, \\
				\hva= 0  &\hbox{ on } \partial\left( \frac{\Om-\xl}{\sqrt{\ma}}\right),
			\end{array}\right.
		\end{equation*}
where we have let $\hha(x)=h(\xl+\sa x)$. By \eqref{controlva} and standard elliptic theory there thus exists $\hat{v}_{\infty}\in C^2(\rr^n\backslash\{0\})$ such that $ \hva \to \hat{v}_{\infty}$ in $C_{\loc}^2(\rr^n\backslash\{0\})$, and Theorem \ref{maintheo1} shows that for any $x \in \R^n \backslash \{0\}$ we have 
		\begin{equation*} 
			\hat{v}_{\infty}(x) = (n(n-2))^{\frac{n-2}{2}}|x|^{2-n} \pm v_{\infty}(\xin).
				\end{equation*}
The change of variables $x=\xl+\sa y$ and straightforward computations then show that 
	\begin{equation} \label{pohoint1}
	\begin{aligned}
		&\ma^{-\frac{n-2}{2}}\int_{\partial\bsa(\xl)} F_{\alpha}(x)\, d\sigma(x)\\
		&=\int_{\partial B_{\delta}(0)} \l x, \nu\r\left(  \frac{|\nabla \hva|^2}{2}+\ma\hha \frac{\hva^2}{2}-\ma\frac{|\hva|^{2^*}}{2^*}\right)d\sigma(x) \\
		& -\int_{\partial B_{\delta}(0)}\left( \l x, \nabla \hva \r+\frac{n-2}{2}\hva \right)\partial_{\nu}\hva  d\sigma(x) \\
		& = \pm \frac{\omega_{n-1}}{2}n^{\frac{n-2}{2}}(n-2)^{\frac{n+2}{2}}\vo(\xin) + \ve_\delta + o(1)
		\end{aligned}
	\end{equation}
as $\alpha \to + \infty$, where $\ve_\delta$ denotes a quantity such that $\lim_{\delta \to 0} \ve_\delta = 0$ and where $\omega_{n-1}$ is the area of the round sphere $\mathbb{S}^{n-1}$. We now claim that the following holds:
	\begin{equation}\label{eq:estpart1}
	\begin{aligned}
		 \int_{\bsa(\xl)}& \left( \ha(x)+\frac12 \l\nabla \ha(x),x-\xl \r \right)\va^2\, dx \\=	
		& \left\{\begin{array}{ll}
		\vspace{0.1cm}	O\left( \ma^{\frac{3}{2}}\right)  &\hbox{ if } n=3\\
		\vspace{0.1cm}	O\left( \ma^2\ln\left(\frac{1}{\ma}\right)\right)	 &\hbox{ if } n=4\\
			\ma^2\left( \hi(\xin)\int_{\rr^n} B_0(x)^2\, dx+o(1)\right)  &\hbox{ if } n\geq 5,
		\end{array}\right.
		\end{aligned} 
	\end{equation}
	where $B_0$ is defined in \eqref{B0}. We prove  \eqref{eq:estpart1}. First, using \eqref{contprojbulle} and Theorem \ref{maintheo1}, straightforward computations show that 
\begin{equation}\label{eq:estpart1:1}
\begin{aligned}
 \int_{\bsa(\xl)}&\frac12 \l\nabla \ha(x),x-\xl \r \va^2\, dx\\
  =&\left\{\begin{array}{ll}
		\vspace{0.1cm}	O\left( \ma^{2}\right)  &\hbox{ if } n=3,4\\
			O \left( \ma^3 |\ln \ma| \right)  &\hbox{ if } n\geq 5,
		\end{array}\right.
\end{aligned} 
\end{equation} 
and that 
	\begin{equation}\label{eq:estpart1:2}
	\begin{aligned}
	 \int_{\bsa(\xl)}&  \ha(x)\va^2\, dx = \left\{\begin{array}{ll}
		\vspace{0.1cm}	O\left( \ma^{\frac{3}{2}}\right)  &\hbox{ if } n=3\\
		\vspace{0.1cm}	O\left( \ma^2\ln\left(\frac{1}{\ma}\right)\right)	 &\hbox{ if } n=4. \\
		\end{array}\right.
		\end{aligned} 
		\end{equation}
 If $n\geq 5$, and using Theorem \ref{maintheo1}, we have 
 $$  \int_{\bsa(\xl)}  \ha(x)\va^2\, dx =  \int_{\bsa(\xl)}  \ha(x)\big( \Pi B_\alpha\big) ^2\, dx + o(\ma^2). $$
Dominated convergence together with  \eqref{contprojbulle2} now shows that 
\begin{equation*}
 \int_{\bsa(\xl)}  \ha(x)\big(\Pi B_\alpha\big)^2\, dx = \hi(\xin)\int_{\rr^n} \ma^2 B_0(x)^2\, dx + o(\ma^2).
\end{equation*} 
Combining the latter with \eqref{eq:estpart1:1} and \eqref{eq:estpart1:2} proves \eqref{eq:estpart1}. Combining \eqref{poa1}, \eqref{pohoint1} and \eqref{eq:estpart1} now shows that
\begin{eqnarray}\label{alldim1}	&&\pm\frac{\omega_{n-1}}{2}n^{\frac{n-2}{2}}(n-2)^{\frac{n+2}{2}}\vo(\xin)\ma^{\frac{n-2}{2}} + \ve_\delta\ma^{\frac{n-2}{2}} +o(\ma^{\frac{n-2}{2}})\nonumber\\
	&&=\left\{\begin{array}{ll}
	\vspace{0.1cm}	O\left( \ma^{\frac{3}{2}}\right)  &\hbox{ if } n=3\\
	\vspace{0.1cm}	O\left( \ma^2\ln\left(\frac{1}{\ma}\right)\right)	 &\hbox{ if } n=4\\
		\ma^2\left( \hi(\xin)\int_{\rr^n} B_0^2\, dx+o(1)\right)  &\hbox{ if } n\geq 5.
	\end{array}\right.
\end{eqnarray}
Assume first that $n\in\{3,4,5\}$. Equation \eqref{alldim1} then gives
\begin{eqnarray*} 
	\vo(\xin)+\ve_\delta + o(1)=	\left\{\begin{array}{ll}
		\vspace{0.1cm}O\left( \ma\right)  &\hbox{ if } n=3\\
	\vspace{0.1cm}	O\left( \ma\ln\left(\frac{1}{\ma}\right)\right)	 &\hbox{ if } n=4\\
		O\left( \sa\right)  &\hbox{ if } n=5,
	\end{array}\right.
\end{eqnarray*}
as $\alpha \to + \infty$. Letting first $\alpha \to + \infty$ then $\delta \to 0$ shows that $v_\infty(x_\infty) = 0$. Since $v_\infty \ge 0$ by \eqref{limvlaL2star} and the assumption $I_{\hi}(\Om)<K_n^{-2}$, the strong maximum principle then shows that $v_\infty \equiv 0$. 

Assume now that $n=6$. Integrating $-\Delta B_0 = B_0^2$ shows that 
\begin{equation*} 
	\int_{\rr^6} B_0^2\, dx=6^24^3 \omega_5.
\end{equation*}
Therefore,  it follows from \eqref{alldim1} that 
\begin{eqnarray*}
\pm \frac{\omega_{5}}{2}6^{2}4^4\vo(\xin)\ma^{2}+\ve_\delta \ma^2 + o(\ma^{2})=6^24^3\omega_5\hi(\xin)\ma^{2}+o(\ma^{2}).
\end{eqnarray*}
Letting $\alpha \to +\infty$ and then $\delta \to 0$ shows that 
\begin{equation*} 
	\hi(\xin)= \pm2\vo(\xin).
\end{equation*}
Assume finally that $n \ge 7$. Then $\ma^{\frac{n-2}{2}}= o(\ma^2)$ as $\alpha \to + \infty$, and equation \eqref{alldim1} then gives, after letting $\alpha \to + \infty$,
$$ h_\infty(x_\infty) = 0. $$
These considerations prove Proposition \ref{prop:blowup:interieur} in the case $n \ge 6$. 

\medskip

To conclude the proof of Proposition \ref{prop:blowup:interieur} we now consider the case where $3 \le n \le 5$ and $v_\infty \equiv 0$. We let $\delta >0$ be small enough so that $B_{\delta}(x_\alpha) \subset \Om$ for all $\alpha$ and we write a Pohozaev identity in $B_{\delta}(x_\alpha)$:
	\begin{eqnarray}\label{poa1:u00}
		&&\int_{B_{\delta}(x_\alpha)}\left(\ha(x)+\frac12 \l\nabla \ha(x),x-\xl \r\right)  \va^2\, dx =\int_{B_{\delta}(x_\alpha)}  F_{\alpha}(x) \,  d\sigma(x),
	\end{eqnarray}
where $F_\alpha$ is again as in \eqref{balph}. For $x\in\Om$ we let in this case
		\begin{equation*}
			\hva(x)= \ma^{-\frac{n-2}{2}} \va(x). 
		\end{equation*}
Using \eqref{critvlambda} it is easily seen that $\hva$ satisfies 
		\begin{equation*}
			\left\{\begin{array}{ll}
				-\Delta \hva +h_\alpha \hva=\ma^2 \left|\hva\right|^{2^*-2}\hva &\hbox{ in } \Om, \\
				\hva= 0  &\hbox{ on } \partial \Om,
			\end{array}\right.
		\end{equation*}
and \eqref{contprojbulle} and  \eqref{controlva} show that we have 
$$ |\hva(x)| \le \frac{C}{|x-x_{\alpha}|^{n-2}} \quad \text{ for all } x \in \Om \backslash \{x_\alpha\} $$
where $C$ is a positive constant independent of $\alpha$. Standard elliptic theory with \eqref{limtvaB0} then shows that $ \hva \to \hat{v}_{\infty}$ in $C_{\loc}^2(\overline{\Om} \backslash\{x_\infty\})$, where
$$ \hat{v}_{\infty}(x) = (n-2)\omega_{n-1} \big(n(n-2)\big)^{\frac{n-2}{2}} G_\infty(x_\infty, x)$$
and where $G_\infty$ the Green's function for $-\Delta + \hi$ with Dirichlet boundary conditions in $\Om$, which is the only solution to 
\begin{equation*}	
		\left\{\begin{array}{ll}
			-\Delta_{y} G_{h_\infty}(x,y)+h G_{h_\infty}(x,y)=\delta_{x}&\hbox{ in } \Omega, \\
			 G_{h_\infty}(x,y)= 0  &\hbox{ for } y\in \partial \Om, x\in \Om.
		\end{array}\right.
	\end{equation*}
When $n=3$ it is well-known that we have 
\begin{equation*}
		G_{\infty}(x_\infty,y)=\frac{1}{4\pi |x-y|}+m_{h_\infty}(x_\infty) + O(|x_\infty-y|)  \bb{ for all } y\in \Om\backslash \{x_\infty\}.
	\end{equation*} 
Straightforward computations with the latter then show that 
\begin{equation} \label{poho2:u00}
\begin{aligned}
\ma^{2-n} \int_{B_{\delta}(x_\alpha)}  F_{\alpha}(x) \,  d\sigma(x) = 
\left \{ 
\begin{aligned}
& 24\pi^2 m_{h_\infty}(x_\infty)  + \ve_\delta + o(1) & n=3 \\
& O(1) & n = 4,5 ,
\end{aligned}
\right. 
\end{aligned}
\end{equation}
where $\lim_{\delta \to 0} \ve_\delta =0$. Independently, straightforward computations using Theorem \ref{maintheo1} (see e.g. \cite[Section 5]{PremoselliRobert}) show that 
\begin{equation}\label{poho3:u00}
	\begin{aligned}
		 \int_{B_{\delta}(\xl)}& \left( \ha(x)+\frac12 \l\nabla \ha(x),x-\xl \r \right)\va^2\, dx \\=	
		& \left\{\begin{array}{ll}
		\vspace{0.1cm}	O\left( \delta \ma\right)  &\hbox{ if } n=3\\
		\vspace{0.1cm}	64 \omega_3 h_\infty(x_\infty)\ma^2\ln\left(\frac{1}{\ma}\right) + O\big( \ma^2 \big)	 &\hbox{ if } n=4\\
			\ma^2\left( \hi(\xin)\int_{\rr^n} B_0(x)^2\, dx+o(1)\right)  &\hbox{ if } n\geq 5
		\end{array}\right.
		\end{aligned} 
	\end{equation}
	as $\alpha \to + \infty$. If $n \in \{4,5\}$, combining  \eqref{poho2:u00} and \eqref{poho3:u00} in \eqref{poa1:u00} shows that 
	$$ h_\infty(x_\infty) + o(1) = \left \{ \begin{aligned} & O\left(\ln \left( \frac{1}{\ma} \right)^{-1} \right) & n=4 \\ & O (\ma) & n = 5 \end{aligned} \right. $$
	as $\alpha \to + \infty$, which shows that $h_\infty(x_\infty) = 0$. If $n=3$, combining  \eqref{poho2:u00} and \eqref{poho3:u00} in \eqref{poa1:u00} shows that 
	$$ m_{h_\infty}(x_\infty) + o(1) + \ve_\delta = O(\delta) $$
	as $\alpha \to + \infty$. Letting first $\alpha \to + \infty$ then $\delta \to 0$ proves that $m_{h_\infty}(x_\infty) = 0$, which concludes the proof of Proposition \ref{prop:blowup:interieur}. 
\end{proof}

\subsection{boundary blow-up case: $x_\infty \in \partial \Om$} We assume in this subsection that $x_\infty \in \partial\Om$. For $\alpha \ge 1$, we let
\begin{equation} \label{def:da}
 d_\alpha = d\big(x_\alpha, \partial \Om \big) \to 0 
 \end{equation}
as $\alpha \to +\infty$, since $x_\infty \in \partial \Om$. We know from \eqref{dasurma} that $d_\alpha >> \ma$ as $\alpha \to + \infty$.  For $\alpha \ge 1$ we also let 
\begin{equation}\label{ra}
r_{\la}=\frac{\sqrt{\ma}}{d_{\la}^{\frac{1}{n-2}}},
\end{equation}
and we analyse the bubbling behavior of $v_\alpha$ at the scale $r_\alpha$.  The idea to consider the scale $r_\alpha$ comes from the following heuristic. Recall that when $v_\infty >0$, Hopf's lemma shows that 
$$v_\infty \big(x_\infty - t \nu(x_\infty) \big) = \big( -\partial_{\nu} v_\infty(x_\infty) \big) t + o(t)$$ 
as $t\to 0$. At distance $d_\alpha$ from $\partial \Om$, $v_\infty$ thus behaves at first-order as $\big( -\partial_{\nu} v_\infty(x_\infty) \big) d_\alpha $. The scale $r_\alpha$ thus defines the distance from $x_\alpha$ at which $B_\alpha$ and $v_\infty$ become of the same size. We analyse the boundary blow-up of $v_\alpha$ according to the value of $\frac{d_\alpha}{r_\alpha}$. We first prove the following result, that states that boundary blow-up points cannot get too close from $\partial \Om$:

\begin{prop} \label{prop:blowup:bord}
Let $\Om$ be a smooth bounded domain of $\rr^n$, $n\geq 3$. Let $(\ha)_{\alpha\in\nn}$ be a sequence of functions that converges in $C^{1}(\overline{\Om})$ to $\hi$, where $-\Delta+\hi$ is coercive in $H^1_0(\Om)$ and where $I_{\hi}(\Om)<K_{n}^{-2}$, and we let $(\va)_{\alpha\in\nn}\in H_0^1(\Om)$ be a sequence of solutions of \eqref{critvlambda} that satisfies \eqref{limvlaL2star}, \eqref{limvlaLinfini} and \eqref{sumvla}. Let $x_\infty = \lim_{\alpha \to + \infty} x_\alpha$ and assume that $x_\infty \in \partial \Om$. If $n \ge 6$, assume in addition that $\hi \neq 0$ in $\overline{\Omega}$. Then, up to a subsequence,
$$ \frac{d_{\alpha}}{r_\alpha}  \to + \infty  $$
as $\alpha \to + \infty$. 
\end{prop}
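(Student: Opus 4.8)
The plan is to argue by contradiction and to reduce, by a rescaling near $x_\alpha$ at the scale $r_\alpha$, to a model blow-up configuration in a half-space, for which the Pohozaev identity \eqref{poa} leads to an absurdity.

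\emph{Step 1 (rescaled profile).} Assume the conclusion fails; then, up to a subsequence, $d_\alpha/r_\alpha$ stays bounded and $d_\alpha/r_\alpha\to\ell$ for some $\ell\in[0,+\infty)$. By \eqref{ra} this means $d_\alpha^{(n-1)/(n-2)}=O(\sqrt{\mu_\alpha})$, and with \eqref{dasurma} ($d_\alpha/\mu_\alpha\to+\infty$) it forces $r_\alpha/\mu_\alpha\to+\infty$. Assume first $\ell\in(0,+\infty)$, so $r_\alpha\to0$. I would introduce $\widehat v_\alpha(y):=d_\alpha^{-1}v_\alpha(x_\alpha+r_\alpha y)$ on $\widehat\Omega_\alpha:=r_\alpha^{-1}(\Omega-x_\alpha)$; by \eqref{critvlambda} and $r_\alpha^{\,n-2}=\mu_\alpha^{(n-2)/2}/d_\alpha$ one checks that $\widehat v_\alpha$ solves $-\Delta\widehat v_\alpha+r_\alpha^{2}\widehat h_\alpha\widehat v_\alpha=\mu_\alpha d_\alpha^{2/(n-2)}\,|\widehat v_\alpha|^{2^{\star}-2}\widehat v_\alpha$, whose potential and nonlinear coefficients both tend to $0$ (using $d_\alpha\gg\mu_\alpha$; the dimensions $n=3,4$ and the case $\ell=0$ need extra care). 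After a rotation sending the inner normal of $\partial\Omega$ at the point closest to $x_\alpha$ onto $e_n$, the sets $\widehat\Omega_\alpha$ exhaust the half-space $\mathbb H_\ell:=\{y\in\R^n:\ y_n>-\ell\}$. The key point is that Theorem \ref{maintheo1} holds up to $\partial\Omega$ and uniformly in the rate of $x_\alpha\to\partial\Omega$: it gives $|\widehat v_\alpha(y)|\le C\bigl(|y|^{2-n}+|y_n+d_\alpha/r_\alpha|\bigr)$ away from $0$, so that by elliptic theory $\widehat v_\alpha\to\widehat v_\infty$ in $C^2_{\loc}(\overline{\mathbb H_\ell}\setminus\{0\})$, where $\widehat v_\infty$ is harmonic in $\mathbb H_\ell\setminus\{0\}$, vanishes on $\partial\mathbb H_\ell$, is $\sim\kappa_n|y|^{2-n}$ at $0$ (with $\kappa_n=(n(n-2))^{(n-2)/2}$, by Proposition \ref{prop:Blowup1} and \eqref{controlva}), and grows at most linearly at infinity. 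By uniqueness of such functions, $\widehat v_\infty(y)=\kappa_n\bigl(|y|^{2-n}-|y-\bar y_\ell|^{2-n}\bigr)+\tfrac{c_\infty}{\ell}(y_n+\ell)$, where $\bar y_\ell=-2\ell\,e_n$ is the reflection of $0$ across $\partial\mathbb H_\ell$ and $c_\infty:=-\partial_\nu v_\infty(x_\infty)\ge0$ ($c_\infty=0$ if $v_\infty\equiv0$): the image term comes from the Dirichlet condition together with the flattening of $\partial\Omega$ at scale $r_\alpha$, the linear term from Hopf's lemma for $v_\infty$.

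\emph{Step 2 (Pohozaev contradiction).} I would then apply \eqref{poa} to $v_\alpha$ on $U_\alpha:=B_{Rr_\alpha}(x_\alpha)\cap\Omega$ for a large fixed $R>\ell$, so that $U_\alpha$ straddles $\partial\Omega$. The bulk left-hand side of \eqref{poa} is controlled by \eqref{controlva} and equals $O(\mu_\alpha^2)$ for $n\ge5$ (using $\int_{\R^n}B_0^2<\infty$), $O(\mu_\alpha^2|\ln\mu_\alpha|)$ for $n=4$, and $O(\mu_\alpha r_\alpha)$ for $n=3$. On $\partial U_\alpha$ the right-hand side splits into a spherical cap $\partial B_{Rr_\alpha}(x_\alpha)\cap\Omega$ and a boundary piece $\partial\Omega\cap B_{Rr_\alpha}(x_\alpha)$; on the latter $v_\alpha\equiv0$, so the integrand reduces to $-\tfrac12\langle x-x_\alpha,\nu\rangle(\partial_\nu v_\alpha)^2$, and since near the closest point $\langle x-x_\alpha,\nu\rangle=d_\alpha+O(|x-x_\infty|^2)>0$ while $\partial_\nu v_\alpha$ is governed by the image charge $\partial_\nu\Pi B_\alpha\sim-\mu_\alpha^{(n-2)/2}d_\alpha^{\,1-n}$ concentrated at scale $d_\alpha\lesssim r_\alpha$, this piece contributes $-(c_n+o(1))\mu_\alpha^{\,n-2}d_\alpha^{\,2-n}$ with $c_n>0$ -- the classical ``boundary mass'' (when $v_\infty>0$ there is an extra term from $\partial_\nu v_\infty\neq0$ on $\partial\Omega$, absorbed by subtracting the Pohozaev identity satisfied by $v_\infty$ on $U_\alpha$, of size $O(d_\alpha^2(Rr_\alpha)^n)$, so as to isolate the $\Pi B_\alpha$ and cross terms). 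Dividing \eqref{poa} by $c_n\mu_\alpha^{\,n-2}d_\alpha^{\,2-n}$ and letting $\alpha\to+\infty$: in dimensions $3\le n\le5$ the bulk term becomes negligible \emph{precisely because of the contradiction hypothesis} (e.g.\ for $n=5$, $\mu_\alpha^2\ll\mu_\alpha^{3}d_\alpha^{-3}$ is equivalent to $d_\alpha^3\ll\mu_\alpha$, which follows from $d_\alpha^{4}=O(\mu_\alpha)$); the spherical cap converges to the Pohozaev sphere integral of $\widehat v_\infty$ on $\partial B_R\cap\mathbb H_\ell$, which tends to $0$ as $R\to+\infty$ by the decay of the bubble tail; and the boundary piece tends to $-1$. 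Letting $R\to+\infty$ one is left with $0=-1$, a contradiction. The degenerate case $\ell=0$ is treated along the same lines after rescaling at the scale $d_\alpha$ with amplitude $d_\alpha^{\,n-2}\mu_\alpha^{-(n-2)/2}$, the limit profile being then the bare half-space Green's function $\kappa_n(|y|^{2-n}-|y+2e_n|^{2-n})$ of $\{y_n>-1\}$. Dimensions $n\ge6$ are the remaining case: since $\ell\in(0,+\infty)$ together with $r_\alpha\gg\sqrt{\mu_\alpha}$ forces $d_\alpha\gg\sqrt{\mu_\alpha}$, one has $B_{\delta\sqrt{\mu_\alpha}}(x_\alpha)\subset\Omega$ and the interior Pohozaev identity of Proposition \ref{prop:blowup:interieur}, applied verbatim and using $v_\infty(x_\infty)=0$, yields $h_\infty(x_\infty)=0$, contradicting $h_\infty\neq0$; the sub-case $\ell=0$ in dimension $\ge6$ is split further according to whether $d_\alpha\gg\sqrt{\mu_\alpha}$ (interior Pohozaev) or not (boundary-straddling Pohozaev, again using $h_\infty\neq0$).

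\emph{Main difficulty.} The heart of the matter is Step 1: passing to the rescaled limit \emph{uniformly up to $\partial\Omega$} and, in particular, producing the image-charge term of $\widehat v_\infty$. This is exactly where the global pointwise estimate of Theorem \ref{maintheo1} is indispensable -- a purely interior blow-up analysis is unavailable because, under the contradiction hypothesis, $x_\alpha$ sits at distance only $d_\alpha\lesssim r_\alpha$ from $\partial\Omega$, so the concentration point is ``as deep inside the bubble'' as the boundary is. The rest of the work -- the low-dimensional bookkeeping ($n=3,4$: logarithms and the divergence of $\int_{\R^n}B_0^2$), the careful tracking of every sub-leading term in \eqref{poa} and of the $v_\infty$-contributions when $v_\infty>0$, and the degenerate case $\ell=0$ -- is substantial but routine, and the hypothesis $h_\infty\neq0$ in dimension $n\ge6$ is needed exactly because there the bulk term of \eqref{poa} need not be negligible under the contradiction hypothesis, so a separate constraint (the interior Pohozaev identity) must be supplied.
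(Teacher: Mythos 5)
Your approach differs from the paper's in the choice of domain: you take $U_\alpha=B_{Rr_\alpha}(x_\alpha)\cap\Om$, which straddles $\partial\Om$, and let $R\to\infty$, reading the ``boundary mass'' off the flat piece $\partial\Om\cap B_{Rr_\alpha}$ through the term $-\tfrac12\langle x-x_\alpha,\nu\rangle(\partial_\nu v_\alpha)^2$. The paper instead stays strictly inside $\Om$, taking $U_\alpha=B_{\delta d_\alpha}(x_\alpha)$ with $\delta<1$ fixed, letting $\delta\to0$, and reading the same quantity off the value $\h(0)<0$ of the regular part of the half-space profile (Lemma \ref{lem:CVbord} and \eqref{partfa}). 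Both routes access the same obstruction, but the paper's keeps every surface integral away from $\partial\Om$, so the $C^2_{\loc}(\overline{\Om_0}\setminus\{0\})$ convergence of the rescaled solution suffices directly. In your route one needs sharp control of $\partial_\nu v_\alpha$ on $\partial\Om$, and when $v_\infty>0$ the cross term $\partial_\nu\Pi B_\alpha\,\partial_\nu v_\infty$ is of the \emph{same} order as $(\partial_\nu\Pi B_\alpha)^2$ along the flat part; moreover your claim that the spherical cap tends to $0$ as $R\to\infty$ ``by the decay of the bubble tail'' is not quite right once $\widehat v_\infty$ carries the linear term: the Pohozaev flux of a harmonic function on $\partial B_R$ is $R$-independent, and the $O(1)$ cross contribution from the linear part only disappears because it cancels against its mirror on the flat part. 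That cancellation is genuine but must be made explicit rather than discarded.

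The more serious gap is the case $n\ge6$ with $h_\infty<0$ on $\overline\Om$. Your interior Pohozaev at scale $\sqrt{\mu_\alpha}$ does give $h_\infty(x_\infty)=0$ when $d_\alpha\gg\sqrt{\mu_\alpha}$ (and you correctly note this covers $\ell>0$), but you dispatch the remaining sub-case $\ell=0$, $d_\alpha\lesssim\sqrt{\mu_\alpha}$ with ``boundary-straddling Pohozaev, again using $h_\infty\neq0$.'' That single Pohozaev identity only pins the \emph{sign}: the bulk term of order $\mu_\alpha^2$ can balance the boundary mass of order $\mu_\alpha^{n-2}d_\alpha^{2-n}$ precisely when $d_\alpha\sim C\,\mu_\alpha^{(n-4)/(n-2)}$, a scale that the contradiction hypothesis does \emph{not} exclude, and in that regime the identity gives $h_\infty(x_\infty)\le0$, which is consistent with $h_\infty<0$. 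The paper's proof isolates this regime as \eqref{eq:pohobord12} and then invokes a second, independent identity -- the gradient Pohozaev \eqref{pohogradient:1} -- which at that scale forces $\nabla\h(0)=0$, contradicting $\partial_1\h(0)<0$ (Lemma \ref{h0negatif}). This gradient identity is the essential additional tool your proposal omits; without it the $n\ge6$, $h_\infty<0$ case is not closed.
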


\begin{proof} We proceed by contradiction and we assume that, up to a subsequence, 
\begin{equation}\label{daoverra1}
			\lim_{\la\to\ls} \frac{d_{\la}}{r_{\la}}=\rho\in[0,+\infty).
		\end{equation}
		In this case we define, for all $x\in \frac{\Om-\xl}{d_{\la}}$,
		\begin{equation}\label{vb2}
			\vb(x):=\frac{d_{\la}^{n-2}}{\ma^{\frac{n-2}{2}}}v_{\la}(\xl+d_{\la}x). 
		\end{equation}
Equation \eqref{critvlambda} and the definition of $\vb$ show that $\vb$ satisfies
\begin{equation}\label{critvlambdaz}
	\left\{\begin{array}{ll}
		-\Delta\vb -d_{\la}^2\hb \vb=\left(\frac{\ma}{d_{\la}} \right)^2 \left|\vb\right|^{2^*-2}\vb &\hbox{ in } \frac{
			\Om-\xl}{d_{\alpha}}, \\
		\vb= 0  &\hbox{ on } \partial\left(  \frac{
			\Om-\xl}{d_{\alpha}}\right),
	\end{array}\right.
\end{equation}
where $\vb$ as in \eqref{vb2} and $\hb(x):=h(\xl+\da x)$.
By \eqref{ra}  and \eqref{daoverra1} we have 
\begin{equation} \label{condda}
d_{\alpha}=O\Big( \ma^{\frac{n-2}{2(n-1)}}\Big) \quad \text{ or, equivalently, } \quad  \frac{d_\alpha^{n-2}}{\ma^{\frac{n-2}{2}}} \cdot d_\alpha = O(1). 
\end{equation}
By Hopf's lemma we have 
\begin{equation} \label{hopflemma1}
	\vo(\xl+d_{\la}x)= \vo(\xl) + O(d_\alpha) = O(d_\alpha) 
	\end{equation}
as $\alpha \to +\infty$, and the latter remains obviously true if $v_\infty \equiv 0$. The latter with \eqref{contprojbulle} and Theorem \ref{maintheo1} show that 
\begin{equation} \label{eq:contbarva}
	\big|\vb(x) \big| \leq C\left( 1+|x|^{2-n}\right)\bb{ for all } x\in \frac{
		\Om-\xl}{d_{\alpha}} \backslash \{0\}
\end{equation}	
for some positive constant $C$. Since $\Om$ is smooth and since $d_\alpha \to 0$ as $\alpha \to + \infty$ by assumption, standard elliptic theory shows that, up to a rotation,  $\vb \to \bar{v}_{\infty}\in C^2(\overline{\Om_0} \backslash \{0\})$, where we have let 
\begin{equation}\label{defOm0}
		\Om_0:= ]-\infty,1[\times \rr^{n-1} \bb{ as }\alpha\to +\infty
\end{equation}
and where $\bar{v}_{\infty}$ satisfies 
\begin{eqnarray}\label{critvbinfty}
		-\Delta\bar{v}_\infty =0 &\hbox{ in } \Om_{0}\backslash\{0\} \bb{ , }	\bar{v}_\infty= 0  &\hbox{ on } \partial \Om_{0},
\end{eqnarray}
and
\begin{equation}\label{barvbounded}
	\big|\bar{v}_\infty (x) \big| \leq C\left( 1+|x|^{2-n}\right)\bb{ for all } x\in \Om_0.
\end{equation}
\begin{lemme} \label{lem:CVbord}
We have 
	\begin{equation} \label{lem:CVbord1}
		\bar{v}_\infty(x)=\frac{(n(n-2))^{\frac{n-2}{2}}}{|x|^{n-2}}+\h(x) \bb{ for all }x\in \Om_0\backslash\{0\},
	\end{equation}
where $\h$ satisfies 
	\begin{eqnarray}\label{critvlambdax}
			-\Delta \h=0 &\hbox{ in } \Om_{0} \bb{ , } \hspace{0.1cm}	\h= -(n(n-2))^{-\frac{n-2}{2}}|\cdot|^{2-n}  &\hbox{ on } \partial \Om_{0},
	\end{eqnarray}
	and $\h(0) < 0$. 
\end{lemme}

\begin{proof}[Proof of Lemma \ref{lem:CVbord}]
 Let $ 0 < \delta <1$ be fixed and let $x\in \partial B_{\delta}(0) \backslash \{0\}$. For $\alpha \ge 1$.  Lemma \ref{lem:controleprojbulle} in the Appendix shows that the following holds true: 
\begin{equation} \label{eq:improvedasympto}
\frac{\da^{n-2}}{\ma^{\frac{n-2}{2}}} \Pi B_\alpha \big( x_\alpha+ \da x \big) = \frac{(n(n-2))^{\frac{n-2}{2}}}{|x|^{n-2}} + o(1) + \frac{\ve(|x|)}{|x|^{n-2}}, 
\end{equation}
as $\alpha \to + \infty$, where $\ve(|x|)$ denotes a function that satisfies $\lim_{|x| \to 0} \ve(|x|) = 0$. We now consider $\bar{v}_\infty$ satisfying \eqref{critvbinfty}. By \eqref{barvbounded} and B\^ocher's theorem \cite{axler1992bocher, bocher} there exist $\Lambda \neq 0$ and a harmonic function $\h$ in $\Omega_0$ such that 
\begin{equation}
	\vbi(x)=\Lambda |x|^{2-n}+\h(x) \bb{ for } x\in \Omega_0. 
\end{equation} 
Theorem \ref{maintheo1} together with \eqref{condda} shows that 
$$ \big| \bar{v}_\alpha(x) -  \frac{\da^{n-2}}{\ma^{\frac{n-2}{2}}} \Pi B_\alpha \big( x_\alpha+ \da x \big) \big| \le C + o(1) $$
for $x \in B_\delta(0) \backslash \{0\}$, for some fixed $C >0$ as $\alpha \to + \infty$. Multiplying the latter by $|x|^{n-2}$ and passing to the limit as $\alpha \to + \infty$ then shows, using \eqref{eq:improvedasympto}, that 
$$ \big| |x|^{n-2}\bar{v}_\infty(x) -  \big( 1 + \ve(|x|)\big) \big(n(n-2)\big)^{\frac{n-2}{2}}\big| \le C|x|^{n-2}. $$
Letting $x \to 0$ then shows that $\Lambda =  \big(n(n-2)\big)^{\frac{n-2}{2}}$ and proves \eqref{lem:CVbord1}. That $\h$ satisfies  \eqref{critvlambdax} is a simple consequence of the Dirichlet boundary conditions. 

\medskip

To conclude the proof of Lemma \ref{lem:CVbord} we thus need to show that $\h(0) <0$. For $x \in \Omega_0$ as in \eqref{defOm0} we define 
	\begin{equation} \label{defth}
	\th(x)=	2\frac{n^{\frac{n-4}{2}}(n-2)^{\frac{n-2}{2}}}{\omega_{n-1}}(x_1-1)\int_{\partial\Om_0}|y|^{2-n}|x-y|^{-n} \, d\sigma(y).
	\end{equation}
If $y \in \Omega_0$ we let $y^{*}:=(2-y_1,y^{\prime})\in \rr^n$ be its symmetric with respect to the hyperplane $\{y_1=1\}$. For $x,y \in \Omega_0$, $x \neq y$, we let 
$$G_0(x,y)=\frac{1}{(n-2)\omega_{n-1}}\left( |x-y|^{2-n}-|x-y^{*}|^{2-n}\right) $$ 
be the Green's function of the $-\Delta$ in $\Om_0$ with Dirichlet boundary conditions. Straightforward computations show that 
\begin{equation*}
	\partial_{\nu} G_0(x,y)=\frac{2(x_1-1)}{n w_{n-1}}\frac{1}{|x-y|^{n}} \bb{ for }  x\in \Om_0, \bb{ and } y\in \partial \Om_0,
\end{equation*}
so that $\th$ rewrites as 
\begin{equation*}
	\th(x)=\int_{\partial\Om_0}\frac{(n(n-2))^{\frac{n-2}{2}}}{|y|^{n-2}}\partial_{\nu}  G_0(x,y) \, d\sigma(y).
\end{equation*}
In particular, $\th$ satisfies
$$-\Delta \th=0 \hbox{ in } \Om_{0} \bb{ , } \hspace{0.1cm}	\th= -(n(n-2))^{-\frac{n-2}{2}}|\cdot|^{2-n}  \hbox{ on } \partial \Om_{0} $$
and we  have 
\begin{equation}
\label{tho}
	\th(0)=-2\frac{(n(n-2))^{\frac{n-2}{2}}}{nw_{n-1}}\int_{\rr^{n-1}} \left( 1+|y^{\prime}|^2\right)^{1-n}\,dy^{\prime} < 0. 
	\end{equation}
We now claim that 
\begin{equation} \label{eq:hegaleh}
\h = \th \quad \text{ in } \Omega_0.
\end{equation}
To prove \eqref{eq:hegaleh} we first prove that $\th \in L^\infty(\Omega_0)$. We write any $y \in \partial \Omega_0$ as $y = (1, y')$ with $y' \in \R^n$. We similarly write $x \in \Omega_0$ as $x = (x_1, x')$ with $x_1 < 1$. If $x \in \Omega_0$, with \eqref{defth} and a simple change of variables we thus have, for some positive constant $C = C(n)$,
\begin{equation*}
\begin{aligned}
	|\th(x)| & \le C(1-x_1) \int_{\partial \Omega_0} \frac{1}{\big((x_1-1)^2 + |y'|^2\big)^{\frac{n}{2}}} \, dy' \\
	& \le C \int_{\partial \Omega_0}\frac{1}{\big(1 + |y'|^2\big)^{\frac{n}{2}}}  \, dy' < + \infty,
	\end{aligned} 
\end{equation*}
where the last line again follows from a change of variables. Thus $\th$ is bounded in $\Omega_0 \backslash B_{\ve_0}(1)$. We can now conclude the proof of Lemma \ref{lem:CVbord}. Since $\h$ is harmonic in $\Omega_0$ it is bounded in $B_{\frac12}(0)$. Equations \eqref{barvbounded} and \eqref{lem:CVbord1} also show that $\h$ is bounded in $\Omega_0$. Independently, we just proved that $\th \in L^\infty(\Omega_0)$. The function $\h - \th$ is thus harmonic in $\Omega_0$, bounded in $\Omega_0$ and vanishes on $\partial \Omega_0$. Since $\partial \Omega_0$ is a hyperplane a simple reflection argument allows to apply Liouville's theorem, which shows that $\h \equiv \th$. This proves \eqref{eq:hegaleh} and by \eqref{tho} conclude the proof of Lemma \ref{lem:CVbord}. 
\end{proof}

We are now in position to prove Proposition \ref{prop:blowup:bord}. Let $\delta >0$ be fixed. We write Pohozaev's identity \eqref{poa} in $U_\alpha = B_{\delta d_\alpha}(x_\alpha)$: this gives 
\begin{eqnarray}\label{poa3}
	&&\int_{B_{\delta\da}(\xl)}\left(\ha(x)+\frac{\l\nabla \ha(x),x-\xl \r}{2} \right)  \va^2\, dx =\int_{\partial B_{\delta\da}(\xl)}  F_{\alpha}(x) \,  d\sigma(x),
\end{eqnarray}
where $F_{\alpha}$ is defined in \eqref{balph}. Changing variables we get that 
\begin{eqnarray}\label{Falpha}
	&&\left( \frac{\ma}{\da}\right)^{2-n} \int_{\partial B_{\delta\da}(\xl)}  F_{\alpha}(x) \,  d\sigma(x)\nonumber\\
	&=&\int_{\partial B_{\delta}(0)}\l x, \nu\r\left(  \frac{|\nabla \vb|^2}{2}+\hb \da^2 \frac{\vb^2}{2}-\da^2\frac{|\vb|^{2^*}}{2^*}\right)d\sigma(x)\\
	&&-\int_{\partial B_{\delta}(0)}\left( \l x, \nabla \vb \r+\frac{n-2}{2}\vb \right)\partial_{\nu}\vb \, d\sigma(x),\nonumber
\end{eqnarray}
where $\vb$ is defined in \eqref{vb2}. 
Direct calculations using \eqref{condda} and \eqref{eq:contbarva} yield, since $\ha\in L^{\infty}(\Om)$,
\begin{equation} \label{convrest}
\begin{aligned}
	& \da^2	\int_{\partial B_{\delta}(0)} \l x, \nu\r\hb\vb^2\, d\sigma(x)=O\left(\da^2\delta^{4-n}+\ma^{\frac{n-2}{n-1}}\delta^n\right) = o(1) \quad \text{ and } \\
	& \da^2\int_{\partial B_{\delta}(0)} \l x, \nu\r|\va|^{2^*}\, d\sigma(x)=O\left(\delta^{-n}\da^2+\ma^{\frac{n-2}{n-1}}\delta^n \right) = o(1)
	\end{aligned}
\end{equation}
as $\alpha \to + \infty$. Plugging \eqref{convrest} in  \eqref{Falpha} gives, since $\vb \to \bar{v}_{\infty}\in C^2(\overline{\Om_0} \backslash \{0\})$,
\begin{equation}\label{partfa}
\begin{aligned}
&	\lal \left( \frac{\ma}{\da}\right)^{2-n} \int_{\partial B_{\delta\da}(\xl)}  F_{\alpha}(x) \,  d\sigma(x)\\
	&=\int_{\partial B_{\delta}(0)}|x|\left(  \frac{|\nabla \vbi|^2}{2}-(\partial_{\nu}\vbi)^2\right)\, d\sigma(x)-\frac{n-2}{2}\int_{\partial B_{\delta}(0)} \vbi\partial_{\nu}\vbi\, d\sigma(x)\\
	&=\frac{\omega_{n-1}}{2} n^{\frac{n-2}{2}} (n-2)^{\frac{n+2}{2}} \h(0)+\ve(\delta),
\end{aligned}
\end{equation} 
where $\ve(\delta)\to 0$ as $\delta \to 0$ and where the last equality follows from Lemma \ref{lem:CVbord}. Independently, direct computations using \eqref{haconvhi}, \eqref{limtvaB0} and  \eqref{controlva} show that 
\begin{equation} \label{partha}
\begin{aligned}
&	\int_{B_{\delta\da}(\xl)}\left(\ha(x)+\frac{\l\nabla \ha(x),x-\xl \r}{2} \right)  \va^2\, dx\\
&=	\left\{\begin{array}{ll}
	\vspace{0.1cm}O\left(\delta^3 \da^5+\delta \ma \da\right) &\hbox{ if } n=3\\
	\vspace{0.1cm}	O\left(\delta^4 \da^6+\ma^2\ln\left( \frac{\da}{\ma}\right) \right) &\hbox{ if } n=4\\
\ma^2 \hi(\xin) \int_{\rr^n} B_0(x)^2\, dx +o(\ma^2) + O \big(\delta^n d_\alpha^{n+2}\big)  &\hbox{ if } n\geq 5.
\end{array}\right.
\end{aligned}
\end{equation}
Combining \eqref{partfa} and  \eqref{partha} into \eqref{poa3} we finally obtain that 
\begin{equation} \label{eq:pohobord1}
\begin{aligned}
&\frac{\omega_{n-1}}{2} n^{\frac{n-2}{2}} (n-2)^{\frac{n+2}{2}}\h(0)+\ep(\delta) =	\left( \frac{\da}{\ma}\right)^{n-2} \\
& \times \left\{\begin{array}{ll}
	\vspace{0.1cm}O\left(\delta^3 \da^5+\delta \ma \da\right) &\hbox{ if } n=3\\
	\vspace{0.1cm}	O\left(\delta^4 \da^6+\ma^2\ln\left( \frac{\da}{\ma}\right) \right) &\hbox{ if } n=4\\
\ma^2 \hi(\xin) \int_{\rr^n} B_0(x)^2\, dx +o(\ma^2) + O \big(\delta^n d_\alpha^{n+2}\big)  &\hbox{ if } n\geq 5.
\end{array}\right.
\end{aligned}
\end{equation}
Using \eqref{condda}, and since $d_\alpha \to 0$, we easily obtain that, when $n \in \{3,4,5\}$, \eqref{eq:pohobord1} shows that 
$$ \h(0)+\ep(\delta) = o(1) $$
as $\alpha \to + \infty$, which is a contradiction with Lemma \ref{lem:CVbord}. If now $n \ge 6$, \eqref{condda} shows that $ d_{\alpha}^{n+2} = o (\ma^2)$. Since $\h(0) <0$ by Lemma \ref{lem:CVbord}, we can choose $\delta$ fixed but small enough so that $\h(0) + \ve(\delta) <0$. By \eqref{eq:pohobord1} we then have 
$$ \hi(\xin) \int_{\rr^n} B_0(x)^2\, dx + o(1) \le 0. $$
Letting $\alpha \to + \infty$ implies that $ \hi(\xin) \le 0$. In the case where $h_\infty >0$ in $\overline{\Om}$ this is a contradiction and concludes the proof of Proposition \ref{prop:blowup:bord}. 

We may thus assume that $h_\infty < 0$ in $\overline{\Om}$ and $n \ge 6$. With \eqref{eq:pohobord1} we obtain 
\begin{equation} \label{eq:pohobord12}
d_\alpha = \big( C_0 + o(1) \big) \ma^{\frac{n-4}{n-2}}
\end{equation}
for some constant $C_0 >0$ that depend on $n$ and $h_\infty$. Integrating \eqref{critvlambda} against $\nabla v_\alpha$ in $U_\alpha$ yields the following Pohozaev identity:
\begin{equation} \label{pohogradient:1}
\int_{\partial U_\alpha} \Big( \frac12 |\nabla v_\alpha|^2 \nu - \partial_{\nu} v_\alpha \nabla v_\alpha - \frac{1}{2^*} v_{\alpha}^{2^*} \nu \Big) d \sigma  = - \frac12 \int_{U_\alpha} h_\alpha \nabla( v_\alpha^2) dx,
\end{equation}
where $\nu$ is the outer unit normal to $U_\alpha$. Straightforward computations using Theorem \ref{maintheo1}, \eqref{contprojbulle} and \eqref{hopflemma1} show that 
$$ \int_{\partial U_\alpha}  \frac{1}{2^*} v_{\alpha}^{2^*} \nu  d \sigma=  O \big( \ma^n \da^{-n-1}\big) + O(d_\alpha^{n+1}),$$
while  integrating by parts and using Theorem \ref{maintheo1} and \eqref{contprojbulle} shows that 
$$ \begin{aligned}
\int_{U_\alpha} h_\alpha \nabla( v_\alpha^2) dx &= \int_{\partial U_\alpha} h_\alpha v_\alpha^2 \nu d \sigma - \int_{U_\alpha} v_\alpha^2 \nabla h_\alpha dx \\
& = O \big( \ma^{n-2} \da^{3-n}\big) + O(d_\alpha^{n+1}) + O (\ma^2) . 
\end{aligned} $$
Independently, \eqref{barvbounded} and \eqref{lem:CVbord1} show that 
$$ \begin{aligned} \int_{\partial U_\alpha} \Big( \frac12 |\nabla v_\alpha|^2 \nu - \partial_{\nu} v_\alpha \nabla v_\alpha  \Big) d \sigma & = \frac{\ma^{n-2}}{\da^{n-1}} \left(  \int_{\partial B_\delta(0)} \Big( \frac12 |\nabla \bar{v}_\infty|^2 \nu - \partial_{\nu} \bar{v}_\infty \nabla \bar{v}_\infty  \Big) d \sigma  + o(1) \right)  \\
& =  \frac{\ma^{n-2}}{\da^{n-1}} \left(  n^{\frac{n-2}{2}} (n-2)^{\frac{n+2}{2}} \omega_{n-1} \nabla \h(0) + \ve(\delta)  + o(1) \right)
\end{aligned}   $$ 
as $ \alpha \to + \infty$. Plugging these estimates into \eqref{pohogradient:1} finally gives: 
$$ \begin{aligned} \nabla \h(0) + \ve(\delta) & = O \left( \left( \frac{\ma}{\da} \right)^2  + \frac{d_\alpha^{2n} }{\ma^{n-2}} + \da^2 + \frac{d_\alpha^{n-1}}{\ma^{n-4}}\right) 
& = o(1),
\end{aligned} $$ 
where in the last line we used \eqref{eq:pohobord12}. Passing to the limit as $\alpha \to + \infty$ and as $\delta \to 0$ shows that $\nabla \h(0) = 0$. But going back to \eqref{defth}, and since $\h = \th$, we have $\partial_1 \h(0) < 0$ by Lemma \ref{h0negatif} below, which is a contradiction. This concludes the proof of Proposition \ref{prop:blowup:bord}. 
\end{proof}

We now investigate more precisely what happens at the scale $r_\alpha$. This is the content of the following result: 
\begin{prop} \label{prop:blowup:bord2}
Let $\Om$ be a smooth bounded domain of $\rr^n$, $n\geq 3$. Let $(\ha)_{\alpha\in\nn}$ be a sequence of functions that converges in $C^{1}(\overline{\Om})$ to $\hi$, where $-\Delta+\hi$ is coercive in $H^1_0(\Om)$ and where $I_{\hi}(\Om)<K_{n}^{-2}$, and we let $(\va)_{\alpha\in\nn}\in H_0^1(\Om)$ be a sequence of solutions of \eqref{critvlambda} that satisfies \eqref{limvlaL2star}, \eqref{limvlaLinfini} and \eqref{sumvla}. Let $x_\infty = \lim_{\alpha \to + \infty} x_\alpha$ and assume that $x_\infty \in \partial \Om$. Assume that 
$$ \frac{d_{\alpha}}{r_\alpha}  \to + \infty  $$
as $\alpha \to + \infty$. Then 
\begin{itemize}
\item If $n \in \{3,4,5\}$ we have $v_\infty \equiv 0$.
\item If $n \ge6$ we have $h_\infty(x_\infty)= 0$. 
	\end{itemize}
\end{prop}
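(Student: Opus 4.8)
The plan is to analyse the bubbling of $\va$ at the intermediate scale $\ra$ defined in \eqref{ra}, much as Propositions \ref{prop:blowup:interieur} and \ref{prop:blowup:bord} analyse it at the scales $\ma$ and $\da$. Since $\da/\ra\to\ls$ and $\da\to 0$ we have $\ra=o(\da)$, $\ra\to 0$, and $B_{\delta\ra}(\xl)\subset\subset\Om$ for any fixed $\delta>0$ and $\alpha$ large; note that $\da/\ra\to\ls$ is, by \eqref{ra}, equivalent to $\ma=o\big(\da^{\frac{2(n-1)}{n-2}}\big)$. First I would set, for $x\in\Om_\alpha:=\{x\in\rr^n:\ \xl+\ra x\in\Om\}$,
$$\bar{v}_\alpha(x):=\da^{-1}\va(\xl+\ra x),$$
which by \eqref{critvlambda} solves $-\Delta\bar{v}_\alpha+\ra^2\ha(\xl+\ra\,\cdot\,)\bar{v}_\alpha=\ra^2\da^{2^*-2}|\bar{v}_\alpha|^{2^*-2}\bar{v}_\alpha$ on $\Om_\alpha$, with both coefficients $\ra^2$ and $\ra^2\da^{2^*-2}$ tending to $0$. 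Since $\ra=o(\da)$ the sets $\Om_\alpha$ exhaust $\rr^n$, and \eqref{contprojbulle}, \eqref{controlva}, the explicit form of $B_\alpha$, and Hopf's lemma for $\vo$ (when $\vo>0$) give $|\bar{v}_\alpha(x)|\le C(1+|x|^{2-n})$ on $\Om_\alpha\setminus\{0\}$. Standard elliptic theory then yields, up to a subsequence, $\bar{v}_\alpha\to\bar{v}_\infty$ in $C^2_{\loc}(\rr^n\setminus\{0\})$ with $\bar{v}_\infty$ harmonic on $\rr^n\setminus\{0\}$ and $|\bar{v}_\infty(x)|\le C(1+|x|^{2-n})$.

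The next — and most delicate — point is to identify $\bar{v}_\infty$ exactly as
$$\bar{v}_\infty(x)=(n(n-2))^{\frac{n-2}{2}}|x|^{2-n}+c_\infty,\qquad c_\infty:=\mp\,\partial_\nu\vo(\xin),$$
with the convention $c_\infty=0$ when $\vo\equiv 0$. For this I would invoke Theorem \ref{maintheo1}: the term $\da^{-1}\vo(\xl+\ra x)$ converges to $-\partial_\nu\vo(\xin)$ uniformly on compact sets of $\rr^n$, by a first-order Taylor expansion of $\vo$ at the boundary point nearest to $\xl$ (licit since $\ra/\da\to 0$), while $\da^{-1}\Pi B_\alpha(\xl+\ra x)\to(n(n-2))^{\frac{n-2}{2}}|x|^{2-n}$ in $C^2_{\loc}(\rr^n\setminus\{0\})$. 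This last convergence is where the scale $\ra$, intermediate between the bubble scale $\ma$ and the boundary scale $\da$, must be handled with care: it relies on \eqref{contprojbulle2} together with the fact that the boundary correction $B_\alpha-\Pi B_\alpha$, of size $O\big(\ma^{\frac{n-2}{2}}\da^{2-n}\big)$ near $\xl$, is $o(\da)$ at scale $\ra$ — precisely because $\ma^{\frac{n-2}{2}}\da^{1-n}=(\ra/\da)^{n-2}\to 0$ — and I would use Lemma \ref{lem:controleprojbulle} for the needed expansion of $\Pi B_\alpha$. By Hopf's lemma, $c_\infty=0$ if and only if $\vo\equiv 0$.

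With $\bar{v}_\infty$ in hand I would write Pohozaev's identity \eqref{poa} on $U_\alpha=B_{\delta\ra}(\xl)$ and rescale by $x=\xl+\ra y$. On the boundary side, the pieces of $F_\alpha$ (see \eqref{balph}) carrying $\ha$ or $|\va|^{2^*}$ contribute $o\big(\da^2\ra^{n-2}\big)$ — they come with extra factors $\ra^2$ or $\ra^2\da^{2^*-2}$ — while the $C^2_{\loc}$ convergence turns the remaining boundary Pohozaev integrand into $\da^2\ra^{n-2}$ times the corresponding integrand of $\bar{v}_\infty$ on $\partial B_\delta(0)$. As $\bar{v}_\infty$ is harmonic this integrand is $\delta$-independent, and a direct computation for $(n(n-2))^{\frac{n-2}{2}}|x|^{2-n}+c_\infty$ gives
$$\int_{\partial B_{\delta\ra}(\xl)}F_\alpha\,d\sigma=\da^2\ra^{n-2}\Big(\tfrac{\omega_{n-1}}{2}\,n^{\frac{n-2}{2}}(n-2)^{\frac{n+2}{2}}\,c_\infty+o(1)\Big)$$
as $\alpha\to\ls$. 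On the volume side, using \eqref{controlva}, the scalings of $B_\alpha$ and $\vo$ (with \eqref{estu0}), and the hypothesis $\ma=o\big(\da^{\frac{2(n-1)}{n-2}}\big)$ — the same bookkeeping as in \eqref{eq:estpart1}, \eqref{partha}, \eqref{poho3:u00} — I expect
$$\int_{B_{\delta\ra}(\xl)}\Big(\ha+\tfrac12\l\nabla\ha,x-\xl\r\Big)\va^2\,dx=\left\{\begin{aligned}&o\big(\da^2\ra^{n-2}\big)&&\text{if }n\in\{3,4,5\},\\&\ma^2\Big(\hi(\xin)\!\int_{\rr^n}\!B_0^2\,dx+o(1)\Big)&&\text{if }n\ge 5,\end{aligned}\right.$$
where $\int_{\rr^n}B_0^2<\ls$ for $n\ge 5$; moreover $\da^2\ra^{n-2}=\da\,\ma^{\frac{n-2}{2}}$, so $\da^2\ra^{n-2}=o(\ma^2)$ exactly when $n\ge 6$.

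The conclusion then splits along $n=6$, which is where the dominant balance in the Pohozaev identity flips. If $n\in\{3,4,5\}$, dividing by $\da^2\ra^{n-2}$ forces $c_\infty=0$, i.e. $\partial_\nu\vo(\xin)=0$, hence $\vo\equiv 0$ by Hopf's lemma (since $\vo\ge 0$ solves \eqref{critu0} and is therefore either identically $0$ or positive). If $n\ge 6$, the boundary term is $o(\ma^2)$, so dividing by $\ma^2$ forces $\hi(\xin)=0$. The main obstacle, as indicated above, is the precise identification of $\bar{v}_\infty$ — in particular pinning down $\Pi B_\alpha$ at the intermediate scale $\ra$ — together with the dimension-by-dimension control of the volume integral, where logarithms at $n=4$ and the exact powers of $\ma$ and $\da$ must be tracked so that each case lands on the claimed order; everything else is a rescaling of estimates already established in Sections \ref{secprovetheo1} and \ref{secpohozaev}.
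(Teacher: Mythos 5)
Your proposal follows essentially the same path as the paper's proof: rescale at the intermediate scale $\ra$, use Theorem \ref{maintheo1}, Lemma \ref{lem:controleprojbulle} and Hopf's lemma to identify the limit $\bar{v}_\infty$ as a fundamental solution plus the constant $\mp\partial_\nu\vo(\xin)$, write Pohozaev's identity on a ball of radius comparable to $\ra$ centred at $\xl$, and compare powers of $\ma$ and $\da$ to conclude $\partial_\nu\vo(\xin)=0$ for $n\le 5$ and $\hi(\xin)=0$ for $n\ge 6$. The only cosmetic differences from the paper are that you work on $B_{\delta\ra}(\xl)$ rather than $B_{\ra}(\xl)$ (immaterial, by the $\delta$-independence of the Pohozaev boundary integrand for harmonic functions) and that you bundle the $n\in\{3,4,5\}$ volume estimate as $o(\da^2\ra^{n-2})$ instead of listing the sharper dimension-by-dimension orders the paper records in \eqref{eq:estpart12} before dividing; both encodings lead to the same dichotomy at $n=6$.
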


\begin{proof}
We assume that 
\begin{equation}\label{daoverra}
\lim_{\la\to\ls} \frac{d_{\la}}{r_{\la}}=+\infty.
\end{equation}
Using \eqref{ra} we define, for $x \in \frac{\Om-\xl}{r_{\la}}$, 
\begin{equation}\label{vb1}
\vb(x)=\frac{r_\alpha^{n-2}}{\ma^{\frac{n-2}{2}}}\va(\xl+r_{\la}x) = d_{\la}^{-1}\va(\xl+r_{\la}x). 
\end{equation}
Since $\va$ satisfies \eqref{critvlambda}, $\vb$ solves 
\begin{equation*}
	\left\{\begin{array}{ll}
\vspace{0.1cm}		-\Delta\vb +r_{\la}^2\hb \vb=r_{\la}^2 d_{\la}^{\frac{4}{n-2}} \left|\vb\right|^{2^*-2}\vb &\hbox{ in } \frac{\Om-\xl}{r_{\la}}, \\
		\vb= 0  &\hbox{ on } \partial\left(  \frac{\Om-\xl}{r_{\la}}\right),
	\end{array}\right.
\end{equation*}
where we have let $\hb(x)=h(\xl+\ra x)$. By Hopf's lemma and by \eqref{daoverra} we have 
\begin{equation} \label{asyuo}
	\vo(\xl+r_{\la}x)= \vo(\xl) + O(r_\alpha) = -\partial_{\nu}\vo(\xin)d_{\alpha}+o(d_{\alpha}) 
	\end{equation}
as $\alpha \to +\infty$, and \eqref{asyuo} obviously remains true if $\vo \equiv 0$. Using \eqref{contprojbulle}, Theorem \ref{maintheo1}, \eqref{ra} and \eqref{asyuo} we thus have
\begin{eqnarray*}\label{estva1}
	\big|\vb(x) \big|&\leq	&C \Big( |x|^{2-n}+ 1 \Big) \bb{ for all }x\in \frac{\Om-\xl}{r_{\la}}\backslash\{0\}.
\end{eqnarray*}
Standard elliptic theory then shows that $\vb$ converges to some $\bar{v}_\infty$ in $C^2_{loc}(\rr^n\backslash\{0\})$. Let $x \in \rr^n\backslash\{0\}$ be fixed. First, as a consequence of Lemma \ref{lem:controleprojbulle} in the Appendix, the following holds: 
\begin{equation*} 
\frac{r_\alpha^{n-2}}{\ma^{\frac{n-2}{2}}} \Pi B_\alpha(\xl+r_{\la}x)  \to (n(n-2))^{\frac{n-2}{2}}|x|^{2-n} \quad \text{ in } C^2_{loc} (\rr^n\backslash\{0\}) 
  \end{equation*}
as $\alpha \to + \infty$. The latter with \eqref{asyuo} and Theorem \ref{maintheo1} then shows that 
 \begin{equation}\label{convba}
\bar{v}_\infty= (n(n-2))^{\frac{n-2}{2}}|x|^{2-n} \pm \partial_{\nu}\vo(\xin) 
\end{equation}
holds. For $\alpha$ large enough we let $U_{\alpha}=B_{\ra}(\xl)\subset \Om$ and we apply the Pohozaev Identity \eqref{poa}. We get 
	\begin{eqnarray}\label{poa2}
	&&\int_{B_{\ra}(\xl)}\left(\ha(x)+\frac{\l\nabla \ha(x),x-\xl \r}{2} \right)  \va^2\, dx =\int_{\partial B_{\ra}(\xl)}  F_{\alpha}(x) \,  d\sigma(x),
\end{eqnarray}
where $F_{\alpha}$ is defined in \eqref{balph}. By changing $x$ into $\xl+\da x$, we can write that 
\begin{eqnarray*}
&&\da^{-2}\ra^{2-n}	\int_{\partial B_{\ra}(\xl)}	F_{\alpha}(x)\, d\sigma(x)\\
	&=&	\int_{\partial B_{1}(0)}\l x, \nu\r\left(  \frac{|\nabla \vb|^2}{2}+\hb\ra^2 \frac{\vb^2}{2}-\ra^2\frac{|\vb|^{2^*}}{2^*}\right)d\sigma(x)\\
	&&-\int_{\partial B_{1}(0)}\left( \l x, \nabla \vb \r+\frac{n-2}{2}\vb \right)\partial_{\nu}\vb\, d\sigma(x),
\end{eqnarray*}
where $\vb$ is as in \eqref{vb1}. Direct calculations with \eqref{controlva} and \eqref{asyuo} give
\begin{equation*} 
\begin{aligned}
	& \ra^2	\int_{\partial B_{1}(0)} \l x, \nu\r\hb\vb^2\, d\sigma=O\left(\ra^2\right) \quad \bb{ and } \\
	& \ra^2\int_{\partial B_{1}(0)} \l x, \nu\r|\vb|^{2^*}\, d\sigma=O\left( \ra^2\right).
\end{aligned}
\end{equation*}
Together with \eqref{convba}, the latter then shows that 
\begin{equation}\label{estpart11}
	\lal\da^{-2}\ra^{2-n}	\int_{\partial B_{\ra}(\xl)}	F_{\alpha}(x)\, d\sigma(x)=\pm \frac{\omega_{n-1}}{2}n^{\frac{n-2}{2}}(n-2)^{\frac{n+2}{2}}\partial_{\nu}\vo(\xin).
\end{equation}
Since $\lal \ra\ma^{-1}=+\infty$, direct computations using \eqref{haconvhi}, \eqref{limtvaB0}, \eqref{controlva}, \eqref{ra} and \eqref{asyuo} show that 
\begin{equation}\label{eq:estpart12}
\begin{aligned}
	& \int_{B_{\ra}(\xl)} \left(\ha(x)+\frac{\l\nabla \ha(x),x-\xl \r}{2} \right) \va^2\, dx \\
	& =	\left\{\begin{aligned}
	&O\left( \frac{\ma^{\frac32}}{d_\alpha} \right)  &\hbox{ if } n=3\\
			& O\left( \ma^2\ln\left(\frac{r_\alpha}{\ma}\right) + \ma^2 \right) &\hbox{ if } n=4\\\
		&\ma^2\left( \hi(\xin)\int_{\rr^n} B_0(x)^2\, dx+o(1)\right)  &\hbox{ if } n\geq 5.
	\end{aligned}\right.
\end{aligned}
\end{equation}
Returning now to \eqref{poa2} with  \eqref{estpart11} and \eqref{eq:estpart12}, and since $d_\alpha^2 \ra^{n-2} = d_\alpha \ma^{\frac{n-2}{2}}$ by \eqref{ra}, we have that
\begin{equation}\label{alldim2}
\begin{aligned}
&\pm \frac{\omega_{n-1}}{2}(n-2)^{\frac{n+2}{2}}n^{\frac{n-2}{2}}\partial_{\nu}\vo(\xin)d_\alpha \ma^{\frac{n-2}{2}}+o(d_\alpha \ma^{\frac{n-2}{2}})\\
&=	\left\{\begin{aligned}
	&O\left( \frac{\ma^{\frac32}}{d_\alpha} \right)  &\hbox{ if } n=3\\
			& O\left( \ma^2\ln\left(\frac{r_\alpha}{\ma}\right) \right) &\hbox{ if } n=4\\\
		&\ma^2\left( \hi(\xin)\int_{\rr^n} B_0(x)^2\, dx+o(1)\right)  &\hbox{ if } n\geq 5.
	\end{aligned}\right.
\end{aligned}
\end{equation}
Independently, since $\ra=o(\da)$ by \eqref{daoverra}, and by \eqref{ra}, we get
\begin{equation}\label{sa}
	\sa=o\left( \da^{\frac{n-1}{n-2}}\right) \bb{ as } \alpha\to +\infty.
\end{equation}
Assume first that $n=3$. Then \eqref{alldim2} shows that 
$$ \partial_{\nu}\vo(\xin) + o(1) = O \left( \frac{\ma}{d_\alpha^2}\right) = o(1) $$
by \eqref{sa}. If $n=4$, \eqref{alldim2} shows that
$$ \partial_{\nu}\vo(\xin) + o(1) = O \left(\frac{\ma}{d_\alpha} \ln\left(\frac{r_\alpha}{\ma}\right) \right) = O \left(\ma^{\frac23} \ln\left(\frac{r_\alpha}{\ma}\right) \right) = o(1) $$
by \eqref{sa}. If $n =5$,  \eqref{alldim2} shows that
$$ \partial_{\nu}\vo(\xin) + o(1) = O \left( \frac{\ma^{\frac12}}{d_\alpha}\right) = o(1) $$
again by \eqref{sa}. We thus obtain, when $n \in \{3,4,5\}$, that 
\begin{equation*}
\partial_{\nu}\vo(\xin)=0,
\end{equation*}
which shows that $v_\infty \equiv 0$ by Hopf's lemma. Assume now that $n \ge 6$. Then \eqref{alldim2} shows that
$$ \hi(\xin)\int_{\rr^n} B_0(x)^2\, dx = O \left( d_\alpha \ma^{\frac{n-6}{2}} \right) + o(1) = o(1) $$
since $d_\alpha \to 0$ as $\alpha \to + \infty$. This concludes the proof of Proposition \ref{prop:blowup:bord2}.
\end{proof}

The next result finally shows that, in small dimensions, the concentration point cannot occur on $\partial \Om$:

\begin{prop} \label{prop:blowup:bord3}
Let $\Om$ be a smooth bounded domain of $\rr^n$, $n\geq 3$. Let $(\ha)_{\alpha\in\nn}$ be a sequence of functions that converges in $C^{1}(\overline{\Om})$ to $\hi$, where $-\Delta+\hi$ is coercive in $H^1_0(\Om)$ and where $I_{\hi}(\Om)<K_{n}^{-2}$, and we let $(\va)_{\alpha\in\nn}\in H_0^1(\Om)$ be a sequence of solutions of \eqref{critvlambda} that satisfies \eqref{limvlaL2star}, \eqref{limvlaLinfini} and \eqref{sumvla}. Let $x_\infty = \lim_{\alpha \to + \infty} x_\alpha$. Assume that $n\in\{3,4\}$ or that $n =5$ and $h_\infty \neq 0$ in $\overline{\Om}$. Then  $x_\infty \in \Om$. 
\end{prop}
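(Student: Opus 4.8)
The plan is to argue by contradiction, building on the boundary blow-up analysis of Propositions~\ref{prop:blowup:bord} and~\ref{prop:blowup:bord2}. Suppose $x_\infty\in\partial\Om$. Since $n\le 5$, Proposition~\ref{prop:blowup:bord} applies (for $n\le 5$ it requires no sign condition on $h_\infty$) and yields, up to a subsequence, $d_\alpha/r_\alpha\to+\infty$; Proposition~\ref{prop:blowup:bord2} then gives $v_\infty\equiv 0$. Unwinding the definition~\eqref{ra} of $r_\alpha$, the condition $d_\alpha/r_\alpha\to+\infty$ reads $\ma=o\big(d_\alpha^{\frac{2(n-1)}{n-2}}\big)$, so in this regime $\ma$ may be arbitrarily small compared with $d_\alpha$; in particular~\eqref{condda} \emph{fails}, which is the main structural difference with the situation of Proposition~\ref{prop:blowup:bord}.

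Next I would analyse $v_\alpha$ at the scale $d_\alpha$: set $\vb(x)=\frac{d_\alpha^{n-2}}{\ma^{(n-2)/2}}v_\alpha(x_\alpha+d_\alpha x)$ on $\frac{\Om-x_\alpha}{d_\alpha}$, which solves~\eqref{critvlambdaz}. Since $v_\infty\equiv 0$, the global bound~\eqref{controlva} gives $|\vb(x)|\le C(1+|x|^{2-n})$, and as $\ma=o(d_\alpha)$ the zeroth- and first-order coefficients of~\eqref{critvlambdaz} tend to $0$; standard elliptic theory then produces, up to a rotation, a limit $\vb\to\bar v_\infty$ in $C^2_{\loc}(\overline{\Om_0}\setminus\{0\})$ as $\alpha\to+\infty$, where $\Om_0$ is the half-space~\eqref{defOm0}, $\bar v_\infty$ is harmonic in $\Om_0\setminus\{0\}$, vanishes on $\partial\Om_0$, and satisfies~\eqref{barvbounded}. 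Combining Theorem~\ref{maintheo1} with the improved asymptotics~\eqref{eq:improvedasympto} of $\Pi B_\alpha$ from Lemma~\ref{lem:controleprojbulle} --- which hold regardless of the behaviour of $d_\alpha/r_\alpha$ --- one identifies the pole of $\bar v_\infty$ exactly as in Lemma~\ref{lem:CVbord}: B\^ocher's theorem gives $\bar v_\infty=(n(n-2))^{\frac{n-2}{2}}|\cdot|^{2-n}+\mathcal H$ with $\mathcal H=\widetilde{\mathcal H}$ the explicit harmonic function of Lemma~\ref{lem:CVbord}, so that $\mathcal H(0)<0$ by~\eqref{tho} and $\partial_1\mathcal H(0)<0$ by Lemma~\ref{h0negatif}.

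I would then write Pohozaev's identity~\eqref{poa} on $U_\alpha=B_{\delta d_\alpha}(x_\alpha)$ and rescale it by $d_\alpha$, exactly as in~\eqref{poa3}--\eqref{eq:pohobord1}. With $v_\infty\equiv 0$ the boundary term converges to $\tfrac{\omega_{n-1}}{2}n^{(n-2)/2}(n-2)^{(n+2)/2}\mathcal H(0)+\ve(\delta)$, while the interior term equals $(d_\alpha/\ma)^{n-2}$ times the estimate~\eqref{partha} (with the $v_\infty$-contributions, i.e.\ the $\delta^k d_\alpha^{\,\ell}$ terms, removed). For $n=3$ the interior term is then $O(\delta d_\alpha^2)=o(1)$, and letting $\alpha\to+\infty$ and then $\delta\to 0$ forces $\mathcal H(0)=0$, contradicting~\eqref{tho}. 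For $n\in\{4,5\}$ the interior term is, to leading order, a positive multiple of $\hi(\xin)$ times $d_\alpha^2\ln(d_\alpha/\ma)$ (if $n=4$) or $d_\alpha^3/\ma$ (if $n=5$): if $\hi(\xin)\ge 0$ the sign argument again contradicts $\mathcal H(0)<0$, whereas if $\hi(\xin)<0$ (which for $n=5$ is permitted by the hypothesis $\hi\ne 0$) the identity forces $\lim_\alpha d_\alpha^2\ln(d_\alpha/\ma)$, resp.\ $\lim_\alpha d_\alpha^3/\ma$, to equal a finite positive constant. In this last case I would conclude with the gradient Pohozaev identity~\eqref{pohogradient:1} on $B_{\delta d_\alpha}(x_\alpha)$: after rescaling by $d_\alpha$ its boundary term converges to a positive multiple of $\nabla\mathcal H(0)$, and the relation just obtained makes all the remaining terms $o(1)$; hence $\nabla\mathcal H(0)=0$, contradicting $\partial_1\mathcal H(0)<0$.

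The main obstacle is the control of the error terms at the scale $d_\alpha$ in the regime $d_\alpha/r_\alpha\to+\infty$. In Proposition~\ref{prop:blowup:bord} the bound~\eqref{condda} keeps $\ln(d_\alpha/\ma)$ of size $O(\ln(1/d_\alpha))$ and all ratios $d_\alpha^k/\ma^\ell$ bounded, so the scalar Pohozaev identity alone suffices there; here $\ma$ can be super-polynomially (even super-exponentially) small compared with $d_\alpha$, so the logarithmic term for $n=4$ and the ratio $d_\alpha^3/\ma$ for $n=5$ are a priori unbounded. This is precisely why, for $n=4,5$, one must first extract the sharp asymptotic relation between $d_\alpha$ and $\ma$ from the scalar identity and only then close the argument with the gradient identity; it is also why dimension $5$ needs $\hi\ne 0$ (if $\hi(\xin)=0$ the leading interior term in the scalar identity vanishes and no relation can be extracted), while for $n=3$ the interior term is unconditionally $o(1)$. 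Extra care is also needed in the borderline subcase $n=4$, $\hi(\xin)=0$, where one exploits that $\int_{B_{\delta d_\alpha}(x_\alpha)}(x-x_\alpha)B_\alpha^2\,dx=0$ by the radial symmetry of $B_\alpha$ about $x_\alpha$ to discard the leading contribution of $\nabla\hi$.
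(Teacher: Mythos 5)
Your proposal follows the paper's proof of Proposition~\ref{prop:blowup:bord3} essentially step by step: contradiction via Propositions~\ref{prop:blowup:bord} and~\ref{prop:blowup:bord2} to obtain $d_\alpha/r_\alpha\to+\infty$ and $v_\infty\equiv 0$, rescaling at scale $d_\alpha$ to produce the half-space profile $\bar v_\infty=(n(n-2))^{\frac{n-2}{2}}|\cdot|^{2-n}+\mathcal H$ with $\mathcal H(0)<0$ and $\partial_1\mathcal H(0)<0$, the scalar Pohozaev identity on $B_{\delta d_\alpha}(x_\alpha)$ to dispose of $n=3$ and of the case $\hi(\xin)\ge 0$, and the gradient Pohozaev identity~\eqref{pohogradient:1} after extracting the sharp $\ma$--$d_\alpha$ relation~\eqref{pohogradient:0} to dispose of $\hi(\xin)<0$ (together with the special treatment of $n=4$, $\hi(\xin)=0$). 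This is the same argument as the paper's, organized slightly more transparently by the local sign of $\hi(\xin)$ rather than the global sign of $\hi$ (the two coincide by connectedness of $\overline\Om$), and your explanation of why $n=5$ needs $\hi\ne 0$ while $n=3,4$ do not matches the paper's logic.
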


\begin{proof}
We proceed by contradiction and assume that $x_\infty \in \partial \Om$. Under the assumptions of Proposition \ref{prop:blowup:bord3}, Propositions \ref{prop:blowup:bord} and \ref{prop:blowup:bord2} also apply. They show in particular that  
\begin{equation} \label{hypothese:finale}
\frac{d_\alpha}{r_\alpha} \to + \infty
\end{equation} as $\alpha \to + \infty$ and that $v_\infty \equiv 0$. For $x \in \frac{\Om-\xl}{d_{\la}}$ we define again 
		\begin{equation}\label{vb2bis}
			\vb(x):=\frac{d_{\la}^{n-2}}{\ma^{\frac{n-2}{2}}}v_{\la}(\xl+d_{\la}x). 
		\end{equation}
Equation \eqref{critvlambda} then shows that $\vb$ satisfies
\begin{equation*} 
	\left\{\begin{array}{ll}
		-\Delta\vb -d_{\la}^2\hb \vb=\left(\frac{\ma}{d_{\la}} \right)^2 \left|\vb\right|^{2^*-2}\vb &\hbox{ in } \frac{
			\Om-\xl}{d_{\alpha}}, \\
		\vb= 0  &\hbox{ on } \partial\left(  \frac{
			\Om-\xl}{d_{\alpha}}\right),
	\end{array}\right.
\end{equation*}
where $\hb(x):=h(\xl+\da x)$. Since $v_\infty \equiv 0$, \eqref{contprojbulle} and Theorem \ref{maintheo1} show that 
\begin{equation} \label{eq:contbarva2}
	\big|\vb(x) \big| \leq C |x|^{2-n}\bb{ for all } x\in \frac{
		\Om-\xl}{d_{\alpha}} \backslash \{0\}
\end{equation}	
for some positive constant $C$. Since $\Om$ is smooth and since $d_\alpha \to 0$ as $\alpha \to + \infty$ by assumption, standard elliptic theory shows that, up to a rotation,  $\vb \to \bar{v}_{\infty}\in C^2(\overline{\Om_0} \backslash \{0\})$  as $\alpha\to +\infty$, where $\Om_0:= ]-\infty,1[\times \rr^{n-1}$  and where $\bar{v}_{\infty}$ satisfies 
\begin{eqnarray*} 
		-\Delta\bar{v}_\infty =0 &\hbox{ in } \Om_{0}\backslash\{0\} \bb{ , }	\bar{v}_\infty= 0  &\hbox{ on } \partial \Om_{0}
\end{eqnarray*}
and	
\begin{equation*} 
|\bar{v}_\infty (x) \big| \leq C |x|^{2-n} \quad \text{ for all } x\in \Om_0.
\end{equation*} 
The arguments in the proof of Lemma \ref{lem:CVbord} again show that 
	\begin{equation}  \label{lem:CVbord2}
		\bar{v}_\infty(x)=\frac{(n(n-2))^{\frac{n-2}{2}}}{|x|^{n-2}}+\h(x) \bb{ for all }x\in \Om_0\backslash\{0\},
	\end{equation}
where $\h$ satisfies 
	\begin{eqnarray*} 
			-\Delta \h=0 &\hbox{ in } \Om_{0} \bb{ , } \hspace{0.1cm}	\h= -(n(n-2))^{-\frac{n-2}{2}}|\cdot|^{2-n}  &\hbox{ on } \partial \Om_{0},
	\end{eqnarray*}
is given for any $x \in \Omega$ by 
\begin{equation} \label{expression:h:final}
\h(x) = 2\frac{n^{\frac{n-4}{2}}(n-2)^{\frac{n-2}{2}}}{\omega_{n-1}}(x_1-1)\int_{\partial\Om_0}|y|^{2-n}|x-y|^{-n} \, d\sigma(y),
\end{equation}
and satisfies
\begin{equation} \label{contra:h0}
\h(0) < 0.
\end{equation} 
In the following we let $0< \delta < 1$ and  $U_\alpha = B_{\delta d_\alpha}(x_\alpha)$. We write Pohozaev's identity \eqref{poa} in $U_\alpha$: this gives 
\begin{eqnarray*} 
	&&\int_{B_{\delta\da}(\xl)}\left(\ha(x)+\frac{\l\nabla \ha(x),x-\xl \r}{2} \right)  \va^2\, dx =\int_{\partial B_{\delta\da}(\xl)}  F_{\alpha}(x) \,  d\sigma(x),
\end{eqnarray*}
where $F_{\alpha}$ is defined in \eqref{balph}. Mimicking the computations that led to  \eqref{Falpha}, \eqref{convrest} and \eqref{partfa} we obtain that 
\begin{equation} \label{eq:rajout:1} 
\begin{aligned}
\int_{\partial B_{\delta\da}(\xl)}  &F_{\alpha}(x) \,  d\sigma(x) \\
&=   \left(\frac{\ma}{\delta \da}\right)^{n-2} \left( \frac{\omega_{n-1}}{2} n^{\frac{n-2}{2}} (n-2)^{\frac{n+2}{2}} \h(0)+\ve(\delta) + o(1) \right)
\end{aligned}
\end{equation} 
as $\alpha \to + \infty$, where $\ve(\delta)\to 0$. Independently, direct computations using \eqref{haconvhi}, \eqref{limtvaB0} and  \eqref{controlva} show that 
\begin{equation} \label{eq:rajout:2} 
\begin{aligned}
	& \int_{B_{\ra}(\xl)} \left(\ha(x)+\frac{\l\nabla \ha(x),x-\xl \r}{2} \right) \va^2\, dx \\
	& =	\left\{\begin{aligned}
	&O\left( \ma \ra  \right)  &\hbox{ if } n=3\\
			& 64 \omega_3 h_\infty(x_\infty)  \ma^2\ln\left(\frac{d_\alpha}{\ma}\right) + O( \ma^2 )  &\hbox{ if } n=4\\\
		&\ma^2\left( \hi(\xin)\int_{\rr^n} B_0(x)^2\, dx+o(1)\right)  &\hbox{ if } n\geq 5.
	\end{aligned}\right.
\end{aligned}
\end{equation}
If $n=3$, combining \eqref{eq:rajout:1} and \eqref{eq:rajout:2} gives 
$$ \h(0) = O ( \sqrt{\ma}), $$
hence $\h(0) = 0$, which is a contradiction with \eqref{contra:h0}. This proves Proposition \ref{prop:blowup:bord3} when $n=3$. If $n=4,5$, and using \eqref{contra:h0}, we obtain $h_\infty(x_\infty) \le 0$. If $h_\infty >0$ in $\overline{\Om}$ this is a contradiction and concludes the proof of Proposition \ref{prop:blowup:bord3}. 

We assume from now on that $h_\infty <0$ in $\overline{\Om}$ and $n=4,5$. In this case the proof is similar to the proof of Proposition \ref{prop:blowup:bord} when $n \ge 6$.  Using again  \eqref{contra:h0} the previous Pohozaev's identity then shows the existence of a constant $C_0 >0$ depending on $n, h_\infty$ and $\delta$ such that
\begin{equation} \label{pohogradient:0}
\begin{aligned}
& \da^2 \ln \left(\frac{d_\alpha}{\ma}\right) = C_0 +o(1) & \text{ if } n = 4 \\
& \da = (C_0 + o(1)) \ma^{\frac13} & \text{ if } n = 5.
\end{aligned}
\end{equation}
We recall the gradient Pohozaev identity  \eqref{pohogradient:1}:
\begin{equation*}
\int_{\partial U_\alpha} \Big( \frac12 |\nabla v_\alpha|^2 \nu - \partial_{\nu} v_\alpha \nabla v_\alpha - \frac{1}{2^*} v_{\alpha}^{2^*} \nu \Big) d \sigma  = - \frac12 \int_{U_\alpha} h_\alpha \nabla( v_\alpha^2) dx,
\end{equation*}
where $\nu$ is the outer unit normal to $U_\alpha$. Straightforward computations using Theorem \ref{maintheo1} and \eqref{contprojbulle} show that 
$$ \int_{\partial U_\alpha}  \frac{1}{2^*} v_{\alpha}^{2^*} \nu  d \sigma=  O \big( \ma^n \da^{-n-1}\big),$$
while  integrating by parts and using Theorem \ref{maintheo1} and \eqref{contprojbulle} shows that 
$$ \begin{aligned}
\int_{U_\alpha} h_\alpha \nabla( v_\alpha^2) dx &= \int_{\partial U_\alpha} h_\alpha v_\alpha^2 \nu d \sigma - \int_{U_\alpha} v_\alpha^2 \nabla h_\alpha dx \\
& = O \big( \ma^{n-2} \da^{3-n}\big) + \left \{ \begin{aligned} &  O \left( \ma^2\ln\left(\frac{d_\alpha}{\ma}\right) \right)  & \text{ if } n = 4 \\ & O (\ma^2) & \text{ if } n = 5 \end{aligned} \right \} . 
\end{aligned} $$
Independently, \eqref{eq:contbarva2} and \eqref{lem:CVbord2} show that 
$$ \begin{aligned} \int_{\partial U_\alpha} \Big( \frac12 |\nabla v_\alpha|^2 \nu - \partial_{\nu} v_\alpha \nabla v_\alpha  \Big) d \sigma & = \frac{\ma^{n-2}}{\da^{n-1}} \left(  \int_{\partial B_\delta(0)} \Big( \frac12 |\nabla \bar{v}_\infty|^2 \nu - \partial_{\nu} \bar{v}_\infty \nabla \bar{v}_\infty  \Big) d \sigma  + o(1) \right)  \\
& =  \frac{\ma^{n-2}}{\da^{n-1}} \left(  n^{\frac{n-2}{2}} (n-2)^{\frac{n+2}{2}} \omega_{n-1} \nabla \h(0) + \ve(\delta)  + o(1) \right)
\end{aligned}   $$ 
as $ \alpha \to + \infty$. Plugging these estimates into \eqref{pohogradient:1} finally gives: 
$$ \begin{aligned} \nabla \h(0) + \ve(\delta) & = O \left( \left( \frac{\ma}{\da} \right)^2\right) + O(\da^2) + 
\left \{
\begin{aligned}
& O \left( \da^3 \ln \left(\frac{d_\alpha}{\ma}\right) \right)& \text{ if } n = 4 \\
& O \left( \frac{\da^4}{\ma} \right) &  \text{ if } n = 5
\end{aligned}  \right.   \\
& = o(1),
\end{aligned} $$ 
where in the last line we used \eqref{pohogradient:0}. Passing to the limit as $\alpha \to + \infty$ and as $\delta \to 0$ shows that $\nabla \h(0) = 0$. But going back to \eqref{expression:h:final} we again have $\partial_1 \h(0) < 0$ by Lemma \ref{h0negatif} below, which is a contradiction. This concludes the proof of Proposition \ref{prop:blowup:bord3} when $n=4,5$ and $h_\infty <0$.

To conclude the proof of Proposition \ref{prop:blowup:bord3} we finally assume that $n=4$. If $h_\infty(x_\infty) \neq 0$ in $\overline{\Om}$ the proof of Proposition \ref{prop:blowup:bord3} follows from the previous arguments. We may then assume that $h_\infty(x_\infty) = 0$. In this case combining \eqref{eq:rajout:1} and \eqref{eq:rajout:2} shows that 
$$ \h(0) = O(d_\alpha^2) = o(1) $$
as $\alpha \to + \infty$, which contradicts \eqref{contra:h0}. This concludes the proof of Proposition  \ref{prop:blowup:bord3}.
\end{proof}

\begin{rem} \label{concentration:au:bord}
Assume that $(\va)_{\alpha\in\nn}\in H_0^1(\Om)$ is any sequence of solutions of \eqref{critvlambda} that satisfies \eqref{limvlaL2star} and \eqref{limvlaLinfini}, so that \eqref{sumvla}, \eqref{Bla} and \eqref{limitefaiblevla} also hold. Let $x_\infty = \lim_{\alpha \to  \infty} x_\alpha$ be the concentration point of $u_\alpha$. Propositions \ref{prop:blowup:bord}, \ref{prop:blowup:bord2} and \ref{prop:blowup:bord3} prove that $x_\infty \in \Om$, i.e. that $x_\infty$ is an interior blow-up point, in the following cases (regardless of the value of $\vo$): either when $n \in \{3,4\} $ or when $n \ge 5$ and under the assumption $h_\infty \neq 0$ in $\overline{\Om}$. If $h_\infty$ is allowed to vanish somewhere in $\partial \Om$ the property that $x_\infty \in  \Om$ is unlikely to remain true, and concentration points may arise on the boundary in large dimensions. When $n \ge 7$, for instance, \emph{sign-changing} solutions of \eqref{BNlambda} that blow-up with one concentration point in $\partial \Om$ as $\lambda \to 0_+$ (which corresponds to $h_\infty \equiv 0$) have been constructed in \cite{vaira2015new} (see also \cite{musso2024nodal} for a more recent construction with an arbitrary number of bubbles).
\end{rem}

\begin{rem} \label{concentration:au:bord2}
We mentioned in Remark \ref{concentration:au:bord} that when $n \ge 7$ and $h_\infty \equiv 0$ \emph{sign-changing} solutions of \eqref{BNlambda} that blow-up with one concentration point in $\partial \Om$ as $\lambda \to 0_+$ exist (see \cite{vaira2015new}). By contrast, it is important to point out that, in any dimension $n \ge 4$, \emph{positive} solutions of \eqref{BNlambda} as $\lambda \to 0_+$ may only blow-up with interior concentration points and do not possess concentration points in $\partial \Om$. This is shown in \cite[Proposition 2.1]{LaurainKonig2}, and heavily relies on the positivity of the solutions. The issue of boundary concentration points thus arises when working with \emph{sign-changing} solutions of \eqref{BNalpha}. 
\end{rem}

 We are now in position to prove Theorem \ref{maintheo2bis}.
 
 \begin{proof}[End of the proof of Theorem \ref{maintheo2bis}]
 	Let $\Om$ be a smooth bounded domain of $\rr^n$, $n\geq 3$, and $(\ha)_{\alpha\in\nn}$ be sequence that converges in $C^{1}(\overline{\Om})$ towards $\hi$. Assume that $-\Delta+\hi$ is coercive and that $I_{\hi}(\Om)<K_{n}^{-2}$. Let $(\va)_{\alpha\in\nn}\in H_0^1(\Om)$ be a sequence of solutions of \eqref{critvlambda} that satisfies \eqref{limvlaL2star}. Assume first that $(\va)_{\alpha \in \mathbb{N}}$ is, up to a subsequence, uniformly bounded in $L^\infty(\Om)$. By standard elliptic theory it then strongly converges, again up to a subsequence, to some $v_0$ in $C^2(\overline{\Om})$ as $\alpha \to + \infty$. That $v_0 \neq 0$ simply follows from the coercivity of $-\Delta + h_\infty$ which easily implies, by Sobolev's inequality, that $\liminf_{\alpha \to + \infty} \Vert \va \Vert_{H^1_0} >0$. This concludes the proof of Theorem \ref{maintheo2bis}.
	
We thus proceed by contradiction and assume that, up to a subsequence, \eqref{limvlaLinfini} holds, and hence that \eqref{sumvla}, \eqref{Bla} and \eqref{limitefaiblevla} hold for some sequence $(x_{\alpha})_{\alpha \in \mathbb{N}}$ of points in $\Om$ and $(\ma)_{\alpha \in \mathbb{N}}$ of positive real number converging to $0$ satisfying \eqref{defma}. In particular,
$$ v_\alpha = B_{\alpha} \pm v_\infty + o(1) \quad \text{ in } H^1_0(\Om),$$
where $v_{\infty}\equiv 0$ or $v_\infty$ is a positive solution of \eqref{critu0}. We let $x_\infty = \lim_{\alpha \to + \infty} x_\alpha \in \overline{\Om}$. Under these assumptions, the analysis of Section \ref{secpohozaev} applies. 

We first assume that  $n \ge 7$ and that $\hi \neq 0$ at every point of $\overline{\Om}$. Propositions  \ref{prop:blowup:bord} and  \ref{prop:blowup:bord2}  first show that $x_\infty \in \Om$. Proposition \ref{prop:blowup:interieur} then applies and shows that $\hi(x_\infty) = 0$, which is a contradiction.

We now assume that $3 \le n \le 5$ and that $(\va)_{\alpha\in\nn}\in H_0^1(\Om)$ is \emph{sign-changing} for all $\alpha \ge 0$. We then claim that we have 
\begin{equation} \label{contra:petites:dims}
 v_\infty >0 \quad \text{ in } \Om. 
 \end{equation}
This is a strong consequence of the assumption that $\va$ changes sign. We adapt an argument from \cite[Lemma 3.1]{CeramiSoliminiStruwe}. Since $\va$ does not strongly converge to $\vo$, $(\va)_+$ and $(\va)_-$ may not simultaneously strongly converge to $(\vo)_+$ and $(\vo)_-$. Assume for simplicity that $(\va)_+$ weakly but not strongly converges to $(\vo)_+$ in $H^1_0(\Om)$. Integrating \eqref{critvlambda} against $(\va)_+$ and using Br\'ezis-Lieb lemma shows that 
$$ \int_\Om |\nabla ( (\va)_+ - (\vo)_+) |^2\, dx + o(1) = \int_{\Om} |(\va)_+ - (\vo)_+|^{2^*} \, dx, $$
from which we deduce that $ \int_{\Om} |(\va)_+ - (\vo)_+|^{2^*} \, dx \ge K_n^{-n} + o(1)$ as $\alpha \to + \infty$ by \eqref{defkn}. Independently, since $(\va)_-$ is nonzero, integrating \eqref{critvlambda} against $(\va)_-$ and using \eqref{Iho} yields $ \int_{\Om} |(\va)_-|^{2^*} \, dx \ge I_{h_\alpha}(\Om)^{\frac{n}{2}}$. 
Thus, again by Br\'ezis-Lieb's lemma,
$$ \begin{aligned} 
\int_\Om |\va|^{2^*} \, dx & = \int_{\Om} |(\va)_+ |^{2^*} \, dx + \int_{\Om} |(\va)_- |^{2^*} \, dx   \\
& =  \int_{\Om} |(\va)_+ - (\vo)_+|^{2^*} \, dx +  \int_{\Om} |(\vo)_+|^{2^*} \, dx +  \int_{\Om} |(\va)_-|^{2^*} \, dx + o(1) \\
& \ge K_n^{- n} + I_{h_\infty}(\Om)^{\frac{n}{2}} + o(1)
\end{aligned} $$ 
as $\alpha \to + \infty$. This shows that $\vo \not \equiv 0$ and hence that $\vo >0$ in $\Om$ and attains $I_{h_\infty}(\Om)$. As before, the analysis of Section \ref{secpohozaev} applies to $\va$. First, using \eqref{contra:petites:dims}, Propositions \ref{prop:blowup:bord} and  \ref{prop:blowup:bord2} show that $x_\infty \in \Om$. We may thus apply Proposition \ref{prop:blowup:interieur}, which shows that $\vo \equiv 0$ and contradicts \eqref{contra:petites:dims}. Thus $(\va)_{\alpha \in \mathbb{N}}$ is again uniformly bounded in $L^\infty(\Om)$ and Theorem \ref{maintheo2bis} is proven. 
 \end{proof}
 
 We now prove Corollary \ref{maincorol}:
 
 \begin{proof}[Proof of Corollary \ref{maincorol}]
We assume that $\Om$ and $h$ are as in the assumptions of Corollary \ref{maincorol}. We observed in the proof of Theorem \ref{maintheo2bis} that any sequence $(\va)_{\alpha \in \mathbb{N}}$ of solutions of \eqref{BN} which is bounded in $L^\infty(\Om)$ up to a subsequence is precompact in $C^2(\overline{\Om})$. With this observation we proceed by contradiction: if no $\ve$ as in the statement of Corollary \ref{maincorol} exists, we can find a sequence $(\va)_{\alpha \in \mathbb{N}}$ of solutions of 
 \begin{equation*}
 	\left\{\begin{aligned}
		-\Delta \va +h \va &=\left|\va\right|^{2^*-2}\va \hbox{ in } \Omega \\
		\va&= 0  \hbox{ in } \partial \Omega
	\end{aligned}\right.
\end{equation*}
which satisfies $\lim_{\alpha \to + \infty} \Vert \va \Vert_{\infty} = + \infty$ and $\limsup_{\alpha \to + \infty} \int_{\Om} |\va|^{2^*} \, dx \le K_n^{-n} + I_h(\Om)^{\frac{n}{2}}$. When $3 \le n \le 5$ we have in addition that $(\va)_{\alpha \in \mathbb{N}}$ changes sign. We may now apply Theorem \ref{maintheo2bis} to the sequence $(\va)_{\alpha \in \mathbb{N}}$ with $h_\alpha \equiv h$ for all $\alpha \ge 0$, which gives a contradiction. 
 \end{proof}
 
 We now consider the six-dimensional case and prove Proposition \ref{prop:cas6}: 
 
 \begin{proof}[Proof of Proposition \ref{prop:cas6}]
Assume indeed that $(v_\alpha)_{\alpha \in \mathbb{N}}$ is a sequence of solutions of \eqref{critvlambda} that satisfies \eqref{limvlaL2star} and \eqref{limvlaLinfini}. Hence \eqref{sumvla}, \eqref{Bla} and \eqref{limitefaiblevla} hold for some sequence $(x_{\alpha})_{\alpha}$ of points in $\Om$ and $(\ma)_{\alpha}$ of positive real number converging to $0$ satisfying \eqref{defma}. Then
$$ v_\alpha = B_{\alpha} \pm v_\infty + o(1) \quad \text{ in } H^1_0(\Om),$$
where $v_{\infty}\equiv 0$ or $v_\infty$ is a positive solution of \eqref{critu0}. We let $x_\infty = \lim_{\alpha \to + \infty} x_\alpha \in \overline{\Om}$. First, Propositions  \ref{prop:blowup:bord} and \ref{prop:blowup:bord2} show that  $x_\infty \in \Om$. Proposition \ref{prop:blowup:interieur} then applies and shows that $h_\infty(x_\infty) = \pm 2 v_\infty(x_\infty)$. 
 \end{proof}
 
 \begin{rem}
 When $n \in \{3,4,5\}$ Theorem \ref{maintheo2bis} is likely to be false if \eqref{cond:energy1} is not satisfied. On a closed Riemannian manifold, and when $3 \le n \le 5$, blowing-up solutions of equations like \eqref{BNalpha} of the form $B_\alpha + v_\infty$, where $v_\infty$ is a \emph{sign-changing} solution of \eqref{BN}, may exist: see  \cite[Theorem 1.4]{PremoselliVetois2}. The examples in \cite[Theorem 1.4]{PremoselliVetois2} are constructed on a closed manifold with symmetries and $B_\alpha$ concentrates at a point where $v_\infty$ vanishes. These examples are likely to adapt to the case of a symmetric bounded open set when $3 \le n \le 5$ and $h_\infty \neq 0$ in $\overline{\Om}$. They suggest that, even when $3 \le n \le 5$, sign-changing solutions may exhibit non-compactness at a higher energy level than $K_{n}^{-n}+I_{\hi}(\Om)^{\frac{n}{2}}$.  \end{rem}
 
 \appendix

\section{Technical results} \label{annexe}

\subsection{Pointwise estimates on $\Pi B_\alpha$}

Let $\Pi B_\alpha$ be given by \eqref{projbulle}. We prove a technical result that was used several times through the paper and that provides an asymptotic expansion of $\Pi B_\alpha$ close to $\partial \Om$: 
\begin{lemme} \label{lem:controleprojbulle}
Let $(\xl)_{\alpha \in \nn}$ and $(\ma)_{\alpha \in \nn}$ be respectively sequences of points in $\Om$ and positive real numbers, satisfying $d(x_\alpha, \partial \Om) >> \ma$ as $\alpha \to + \infty$. Let $B_\alpha$ be given by \eqref{Bla} and $\Pi B_\alpha$ be given by  \eqref{projbulle}. Let $(y_\alpha)_{\alpha \in \mathbb{N}}$ be a sequence of points in $\Omega$ satisfying 
\begin{equation} \label{lem:appendix1}
d(y_\alpha, \partial \Om) \to 0, \quad  |x_\alpha - y_\alpha| \le \frac12 d(x_\alpha, \partial \Om) \quad \text{ and } \quad \frac{|x_\alpha - y_\alpha|}{\ma} \to + \infty 
\end{equation}
as $\alpha \to + \infty$. Let $\ell = \lim_{\alpha \to + \infty} \frac{ |x_\alpha - y_\alpha|}{d(x_\alpha, \partial \Om)}$ which exists up to a subsequence. Then, as $\alpha \to + \infty$, we have
$$ \Pi B_\alpha(y_\alpha) = \Big( \big( n(n-2) \big)^{\frac{n-2}{2}} +o(1) + \ve(\ell) \Big) \frac{\ma^{\frac{n-2}{2}} }{|x_\alpha - y_\alpha|^{n-2}} $$
where $\ve: \R_+ \to \R_+$ denotes a function such that $\ve(0) = 0$ and $\lim_{x \to 0} \ve(x) = 0$. 
\end{lemme}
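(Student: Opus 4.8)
The plan is to rescale $\Pi B_\alpha$ around $x_\alpha$ at the scale $\rho_\alpha := |x_\alpha - y_\alpha|$ and to identify the limiting profile as a multiple of the Green's function of the limit domain. First, $\rho_\alpha \to 0$: if $d_\alpha := d(x_\alpha, \partial\Om)$ did not go to $0$ along a subsequence, then, $d(\cdot, \partial\Om)$ being $1$-Lipschitz and $\rho_\alpha \le \tfrac12 d_\alpha$ by \eqref{lem:appendix1}, one would get $d(y_\alpha, \partial\Om) \ge \tfrac12 d_\alpha \not\to 0$, contradicting \eqref{lem:appendix1}; hence $d_\alpha \to 0$ and $\rho_\alpha \le \tfrac12 d_\alpha \to 0$, and the same Lipschitz bound gives $d(y_\alpha,\partial\Om) \in [\tfrac12 d_\alpha, \tfrac32 d_\alpha]$. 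Up to a subsequence I fix $\hat y := \lim_{\alpha\to+\infty} \rho_\alpha^{-1}(y_\alpha - x_\alpha)$, so $|\hat y| = 1$, and set $\Om_\alpha := \rho_\alpha^{-1}(\Om - x_\alpha)$ and
\begin{equation*}
 W_\alpha(z) := \mu_\alpha^{-\frac{n-2}{2}}\rho_\alpha^{n-2}\, \Pi B_\alpha(x_\alpha + \rho_\alpha z) \qquad \text{for } z \in \Om_\alpha.
\end{equation*}
By \eqref{projbulle}, \eqref{Bla} and \eqref{B0}, $W_\alpha$ solves $-\Delta W_\alpha + \rho_\alpha^2\, h_\alpha(x_\alpha + \rho_\alpha\cdot)\, W_\alpha = f_\alpha$ in $\Om_\alpha$ with $W_\alpha = 0$ on $\partial\Om_\alpha$, where $f_\alpha(z) := t_\alpha^n B_0^{2^*-1}(t_\alpha z)$ and $t_\alpha := \rho_\alpha/\mu_\alpha \to +\infty$ by \eqref{lem:appendix1}. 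Since $-\Delta B_0 = B_0^{2^*-1}$ and $B_0(x) \sim (n(n-2))^{\frac{n-2}{2}}|x|^{2-n}$ at infinity, integrating the equation for $B_0$ gives $\int_{\R^n} B_0^{2^*-1}\,dx = (n-2)\,\omega_{n-1}\,(n(n-2))^{\frac{n-2}{2}} =: m_n$; hence $(f_\alpha)_\alpha$ is bounded in $L^1$, $f_\alpha \rightharpoonup m_n\delta_0$ as measures, and $f_\alpha \to 0$ in $L^\infty_{\loc}(\R^n\setminus\{0\})$.

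I would then pass to the limit. Estimate \eqref{contprojbulle} together with \eqref{Bla}, \eqref{B0} gives $0 < W_\alpha(z) \le C|z|^{2-n}$ on $\Om_\alpha$ with $C$ independent of $\alpha$, so $W_\alpha$ is uniformly bounded on each compact subset of $\R^n\setminus\{0\}$ for $\alpha$ large, and bounded in $L^q_{\loc}$ for $q < \tfrac{n}{n-2}$. Because $\partial\Om$ is smooth and $\rho_\alpha \to 0$, the rescaled domains $\Om_\alpha$ converge, up to a rotation, in $C^2_{\loc}$ to $\Om_\infty := \R^n$ if $\ell = 0$ (i.e. $d_\alpha/\rho_\alpha \to +\infty$), and to the half-space $\Om_\infty := \{z_n < 1/\ell\}$ if $\ell \in (0,\tfrac12]$, in which case $0$ lies at distance $1/\ell$ from $\partial\Om_\infty$. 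Using $\rho_\alpha^2\|h_\alpha\|_\infty \to 0$, the $L^1$-bound on $f_\alpha$, the local bound on $W_\alpha$ and elliptic theory up to the (smoothly converging) boundary away from $0$, one obtains, along a subsequence, $W_\alpha \to W_\infty$ in $C^1_{\loc}(\overline{\Om_\infty}\setminus\{0\})$ and weakly in $L^q_{\loc}(\Om_\infty)$, where $W_\infty$ satisfies $0 \le W_\infty \le C|\cdot|^{2-n}$, $W_\infty = 0$ on $\partial\Om_\infty$, and $-\Delta W_\infty = m_n\delta_0$ in $\mathcal{D}'(\Om_\infty)$ (the last point by passing to the limit in the weak formulation against $\varphi \in C_c^\infty(\Om_\infty)$). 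By Weyl's lemma $W_\infty - \tfrac{m_n}{(n-2)\omega_{n-1}}|\cdot|^{2-n}$ is harmonic in $\Om_\infty$; comparing $W_\infty$ with $m_n G_{\Om_\infty}(0,\cdot)$, which has the same singularity at $0$, the same (zero) boundary values and the same decay at infinity, a Liouville argument (using an odd reflection across the hyperplane $\partial\Om_\infty$ in the half-space case) forces $W_\infty = m_n G_{\Om_\infty}(0,\cdot)$.

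By $C^1_{\loc}$-convergence, $W_\alpha(\rho_\alpha^{-1}(y_\alpha-x_\alpha)) \to m_n G_{\Om_\infty}(0,\hat y)$, i.e. $\Pi B_\alpha(y_\alpha) = \mu_\alpha^{\frac{n-2}{2}}|x_\alpha - y_\alpha|^{2-n}(m_n G_{\Om_\infty}(0,\hat y) + o(1))$. If $\ell = 0$, then $m_n G_{\R^n}(0,\hat y) = \tfrac{m_n}{(n-2)\omega_{n-1}}|\hat y|^{2-n} = (n(n-2))^{\frac{n-2}{2}}$, which is the claim with $\varepsilon(0)=0$. If $\ell \in (0,\tfrac12]$, then $G_{\Om_\infty}(0,\hat y) = \tfrac{1}{(n-2)\omega_{n-1}}(|\hat y|^{2-n} - |\hat y^*|^{2-n})$, with $\hat y^*$ the reflection of $\hat y$ across $\partial\Om_\infty$; since \eqref{lem:appendix1} forces $d(y_\alpha,\partial\Om)\in[\tfrac12 d_\alpha,\tfrac32 d_\alpha]$, the point $\hat y$ lies at distance in $[\tfrac1{2\ell},\tfrac3{2\ell}]$ from $\partial\Om_\infty$ and at distance $1$ from $0$, so an elementary computation gives $|\hat y^*|^2 = 1 + 4\ell^{-2} - 4\ell^{-1}(\hat y)_n \ge 1 + 2\ell^{-2}$, whence $0 \le |\hat y^*|^{2-n}\le (\ell/\sqrt2)^{n-2}$. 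Thus $m_n G_{\Om_\infty}(0,\hat y) = (n(n-2))^{\frac{n-2}{2}}(1 - |\hat y^*|^{2-n})$ and the statement follows with, e.g., $\varepsilon(\ell) := (n(n-2))^{\frac{n-2}{2}}(\ell/\sqrt2)^{n-2}$, which satisfies $\varepsilon(0)=0$ and $\lim_{x\to0}\varepsilon(x)=0$. The main obstacle is the extraction of the limit $W_\infty$ at the concentration point $z=0$, where the $W^{2,p}$-norms of $W_\alpha$ blow up: as in classical Green's-function blow-up arguments this is dealt with via the uniform $L^1$-control of the approximate Dirac mass $f_\alpha$ and Weyl's lemma; a secondary subtlety, responsible for the correction $\varepsilon(\ell)$, is that the limit domain depends on $\ell$ and the image-charge term in $G_{\Om_\infty}(0,\cdot)$ is negligible precisely when $\ell$ is small.
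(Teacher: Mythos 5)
Your proof is correct, but it takes a genuinely different route from the paper's.

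The paper writes $\Pi B_\alpha(y_\alpha)=\int_\Om G_\alpha(y_\alpha,\cdot)B_\alpha^{2^*-1}\,dx$, discards the contribution from outside $B_{|x_\alpha-y_\alpha|/2}(x_\alpha)$ (as in \eqref{theorieC01}), rescales at the \emph{bubble} scale $\ma$ inside, and then invokes the precise pointwise asymptotics for Green's functions near the boundary given by Proposition~12 of \cite{RobDirichlet}, which directly furnish
\begin{equation*}
\lim_{\alpha\to+\infty}|x_\alpha-y_\alpha|^{n-2}G_\alpha(y_\alpha,x_\alpha+\ma z)=\frac{1}{(n-2)\omega_{n-1}}\Big(1-\frac{1}{(1+4D)^{\frac{n-2}{2}}}\Big),
\end{equation*}
with $D\ge(1-\ell)/\ell^{2}$, followed by dominated convergence. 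You instead rescale $\Pi B_\alpha$ itself at the \emph{intermediate} scale $\rho_\alpha=|x_\alpha-y_\alpha|$, show the rescaled function $W_\alpha$ solves an equation with right-hand side concentrating to $m_n\delta_0$, extract a limit $W_\infty$ via the uniform bound $0<W_\alpha\le C|z|^{2-n}$ and elliptic estimates up to the smoothly converging boundary, and identify $W_\infty=m_n G_{\Om_\infty}(0,\cdot)$ by Weyl's lemma plus a Liouville argument (odd reflection in the half-space case). You then evaluate explicitly using the image-charge formula for the half-space Green's function, obtaining $|\hat y^*|^{2-n}\le(\ell/\sqrt2)^{n-2}$ as the correction. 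The trade-off: your argument is self-contained (it does not rely on the external Green's function estimate of \cite{RobDirichlet}) at the cost of having to establish convergence of the rescaled domains, elliptic estimates up to a moving boundary, and a Liouville-type uniqueness statement for $W_\infty$ — all of which you handle correctly and which mirror the argument the paper already uses in Lemma~\ref{lem:CVbord}; the paper's route is shorter precisely because it delegates the boundary-Green's-function asymptotics to the literature. Two small remarks: the sign of your correction term is negative (the half-space Green's function is smaller than the whole-space one), whereas the lemma writes $\ve:\R_+\to\R_+$; this is an innocuous inaccuracy already present in the paper's statement, since what is actually used downstream is only that $\ve(\ell)\to0$ as $\ell\to0$. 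And your observation that $d(x_\alpha,\partial\Om)\to0$ follows from \eqref{lem:appendix1} is correct and worth keeping, as it justifies that the rescaled domains flatten to a half-space (or to $\R^n$).
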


\begin{proof}
We write a representation formula for $\Pi B_\alpha$ using \eqref{projbulle}: 
\begin{equation} \label{eq:improvedasympto0}
 \Pi B_\alpha(y_\alpha) = \int_{\Om} G_\alpha(y_\alpha, \cdot) B_\alpha^{2^*-1} dx 
 \end{equation} 
 where as before $G_\alpha$ denotes the Green's function of $- \Delta + h_\alpha$ with Dirichlet boundary conditions in $\Omega$. Using \eqref{lem:appendix1}, \eqref{estGlai} and arguing as in \eqref{theorieC01} we have
\begin{equation} \label{eq:improvedasympto2}
\int_{\Om \backslash B_{\frac{|x_\alpha - y_\alpha|}{2}}(x_\alpha)} G_\alpha(y_\alpha, \cdot)B_\alpha^{2^*-1} dx= o \big( B_\alpha(y_\alpha) \big)  
\end{equation}
 as $\alpha \to +\infty$. We let in what follows
\begin{equation*} 
I_{\alpha}:= |x_\alpha - y_\alpha|^{n-2}\ma^{-\frac{n-2}{2}}\int_{B_{\frac{|x_\alpha - y_\alpha|}{2}}(x_{\alpha})} G_{\la}(y_\alpha,\cdot)B_\alpha^{2^*-1} \, dx. 
\end{equation*}
By a change of variable we have 
\begin{equation}\label{A1alpha}
\begin{aligned}
I_{\alpha} & =\int_{B_{\frac{|x_\alpha - y_\alpha|}{2 \ma }}(0)} |x_\alpha - y_\alpha|^{n-2} G_{\la}(y_\alpha, x_\alpha + \ma z) B_0(z)^{2^*-1} \, dz \\
\end{aligned}
\end{equation}
where $B_0$ is as in \eqref{B0}. Using \eqref{estGlai} there is $C >0$ such that, for any $z \in  B_{\frac{|x_\alpha - y_\alpha|}{2 \ma }}(0)$, 
$$|x_\alpha - y_\alpha|^{n-2} G_{\la}(y_\alpha, x_\alpha + \ma z) \le C  $$
holds. Let $z \in \R^n $ be fixed. Since $\ma=o(\da)$ we have by \eqref{lem:appendix1}
 $$\begin{aligned}
 D:=	\lal   \frac{d(y_\alpha,\partial \Om)d(x_\alpha + \ma z,\partial \Om)}{\big| y_\alpha - (x_\alpha + \ma z) \big|^2 }  & \ge \frac{1}{ \ell^2}(1 - \ell)  
   \end{aligned} $$ 
as $\alpha \to + \infty$, where we have let $\ell = \lim_{\alpha \to + \infty} \frac{ |x_\alpha - y_\alpha|}{d(x_\alpha, \partial \Om)}$ and with the convention that the right-hand side is equal to $+ \infty$ if $\ell = 0$. Note that $\ell \le \frac12$ by \eqref{lem:appendix1}. Since $\ma = o(\da)$ and $\lal |y_\alpha - (x_\alpha + \ma z)|=0$ uniformly in $z \in B_R(0)$, Proposition $12$ in \cite{RobDirichlet} applies and shows that for any fixed $z \in \R^n$, 
\begin{equation} \label{eq:improvedasympto3}
\begin{aligned}
	  \lal |x_\alpha - y_\alpha|^{n-2}G_{\la}(y_\alpha, x_\alpha + \ma z) 
& =\frac{1}{(n-2)\omega_{n-1}} \Big( 1 - \frac{1}{(1+4D)^{\frac{n-2}{2}}} \Big) \\
& = \frac{1}{(n-2)\omega_{n-1}}\big( 1 + O(\ell) \big).
\end{aligned}
\end{equation}
Plugging \eqref{eq:improvedasympto3} in \eqref{A1alpha} we get by dominated convergence that 
$$ \begin{aligned}
 I_{\alpha} & = \big( 1 + \ve(\ell) + o(1) \big)  \frac{1}{(n-2) \omega_{n-1}} \int_{\R^n} B_0^{2^*-1} \, dx  \\
 & = \big( 1 + \ve(\ell) + o(1) \big) \big(n(n-2)\big)^{\frac{n-2}{2}}
 \end{aligned} $$
 as $\alpha \to + \infty$, where $\ve(\ell)$ denotes a function such that $\ve(0) = 0$ and $\ve(\ell) \to 0$ as $\ell \to 0$. In the latter estimate we used that 
 $\int_{\R^n}B_0^{2^*-1} \, dx = (n-2) \omega_{n-1} \big( n(n-2)\big)^{\frac{n-2}{2}}$ which follows from integrating the equation $- \Delta B_0 = B_0^{2^*-1}$. Going back to the definition of $I_\alpha$ proves the lemma. 
  \end{proof}
  
  \subsection{Sign of $\partial_1 \h(0)$}
  
  We finally prove the following simple result that was used in the proof of Propositions \ref{prop:blowup:bord} and \ref{prop:blowup:bord3}:
  
  \begin{lemme} \label{h0negatif}
Let $\th$ be given by \eqref{defth}. Then $\partial_1 \th(0) < 0$.
  \end{lemme}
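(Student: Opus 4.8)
The plan is to compute $\partial_1 \th(0)$ directly from the explicit formula \eqref{defth}. Recall that
$$\th(x) = 2\frac{n^{\frac{n-4}{2}}(n-2)^{\frac{n-2}{2}}}{\omega_{n-1}}(x_1-1)\int_{\partial\Om_0}|y|^{2-n}|x-y|^{-n}\, d\sigma(y),$$
where $\partial \Om_0 = \{y_1 = 1\}$, parametrized by $y = (1,y')$ with $y' \in \R^{n-1}$, and $x = (x_1, x')$ with $x_1 < 1$. So $|y|^{2-n} = (1+|y'|^2)^{\frac{2-n}{2}}$ and $|x-y|^{-n} = ((x_1-1)^2 + |x'-y'|^2)^{-n/2}$. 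The idea is to write $\th(x) = c_n (x_1-1) J(x)$ with $J(x) = \int_{\R^{n-1}} (1+|y'|^2)^{\frac{2-n}{2}}((x_1-1)^2 + |x'-y'|^2)^{-n/2}\, dy'$ and $c_n = 2 n^{\frac{n-4}{2}}(n-2)^{\frac{n-2}{2}}/\omega_{n-1} > 0$, then differentiate in $x_1$ at $x = 0$:
$$\partial_1 \th(0) = c_n \Big( J(0) + (x_1-1)\partial_1 J(x)\big|_{x=0}\Big) = c_n\big( J(0) - \partial_1 J(0)\big),$$
since $x_1 - 1 = -1$ at $x = 0$.

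First I would compute $J(0)$: with $x = 0$ we have $x_1 - 1 = -1$, $x' = 0$, so $J(0) = \int_{\R^{n-1}} (1+|y'|^2)^{\frac{2-n}{2}}(1+|y'|^2)^{-n/2}\, dy' = \int_{\R^{n-1}}(1+|y'|^2)^{1-n}\, dy' > 0$, which is exactly the (positive) integral appearing in \eqref{tho}. Next I would compute $\partial_1 J(0)$. Writing $t = x_1 - 1$ and differentiating under the integral sign,
$$\partial_{x_1} J = \int_{\R^{n-1}} (1+|y'|^2)^{\frac{2-n}{2}} \cdot \Big(-\tfrac{n}{2}\Big)\big(t^2 + |x'-y'|^2\big)^{-\frac{n}{2}-1}\cdot 2t\, dy',$$
so at $x = 0$ (i.e. $t = -1$, $x' = 0$) this is $\partial_1 J(0) = n\int_{\R^{n-1}}(1+|y'|^2)^{\frac{2-n}{2}}(1+|y'|^2)^{-\frac{n}{2}-1}\, dy' = n\int_{\R^{n-1}}(1+|y'|^2)^{-n}\, dy' > 0$. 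Therefore
$$\partial_1 \th(0) = c_n\Big( \int_{\R^{n-1}}(1+|y'|^2)^{1-n}\, dy' - n\int_{\R^{n-1}}(1+|y'|^2)^{-n}\, dy'\Big).$$

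It then remains to show the bracket is negative, i.e. $\int_{\R^{n-1}}(1+|y'|^2)^{1-n}\, dy' < n\int_{\R^{n-1}}(1+|y'|^2)^{-n}\, dy'$. This I would verify via the standard Beta-integral identity $\int_{\R^{n-1}}(1+|y'|^2)^{-a}\, dy' = \pi^{\frac{n-1}{2}}\frac{\Gamma(a - \frac{n-1}{2})}{\Gamma(a)}$ for $a > \frac{n-1}{2}$: with $a = n-1$ and $a = n$ respectively, the inequality reduces to $\frac{\Gamma(\frac{n-1}{2})}{\Gamma(n-1)} < n\frac{\Gamma(\frac{n+1}{2})}{\Gamma(n)}$, i.e. (using $\Gamma(n) = (n-1)\Gamma(n-1)$ and $\Gamma(\frac{n+1}{2}) = \frac{n-1}{2}\Gamma(\frac{n-1}{2})$) to $1 < \frac{n}{n-1}\cdot\frac{n-1}{2} = \frac{n}{2}$, which holds for all $n \geq 3$. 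Alternatively, and more elementarily, one can avoid Gamma functions entirely: write $n(1+|y'|^2)^{-n} - (1+|y'|^2)^{1-n} = (1+|y'|^2)^{-n}\big(n - 1 - |y'|^2\big)$ and integrate by parts — note that $\operatorname{div}_{y'}\big(y'(1+|y'|^2)^{1-n}\big) = (n-1)(1+|y'|^2)^{1-n} + (2-2n)|y'|^2(1+|y'|^2)^{-n}$, which after rearranging and integrating over $\R^{n-1}$ (the boundary term at infinity vanishes since $n \geq 3$) gives the desired strict inequality directly. Either way, $\partial_1\th(0) < 0$, which is the claim.

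The routine part is the integral bookkeeping; the only mild subtlety is justifying differentiation under the integral sign near $x = 0$, which is immediate since the integrand and its $x_1$-derivative are dominated, uniformly for $x$ in a neighborhood of $0$, by integrable functions of $y'$ of order $|y'|^{2-2n}$ and $|y'|^{-2n}$ at infinity. I expect no real obstacle here; this is a short computational lemma.
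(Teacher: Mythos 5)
Correct, and essentially the same proof as the paper's: both arrive at $\partial_1 \th(0) = D_0\big(\int_{\partial\Om_0}|y|^{2-2n}\,d\sigma - n\int_{\partial\Om_0}|y|^{-2n}\,d\sigma\big)$ with $D_0 = 2n^{\frac{n-4}{2}}(n-2)^{\frac{n-2}{2}}/\omega_{n-1}>0$, and both conclude by establishing the exact relation $\int_{\R^{n-1}}(1+|y'|^2)^{1-n}dy' = 2\int_{\R^{n-1}}(1+|y'|^2)^{-n}dy'$ (the paper via the radial reduction $I_n^{\frac{n-3}{2}} = \frac12 I_{n-1}^{\frac{n-3}{2}}$ citing \cite{AubinYamabe}, you via the Gamma-function formula or the integration by parts), from which $\partial_1\th(0) = -\frac{n-2}{2}D_0\int_{\partial\Om_0}|y|^{2-2n}\,d\sigma <0$. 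Your integration-by-parts alternative is a nice self-contained way to get the factor-of-two identity without invoking any reference; otherwise the argument is identical.
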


\begin{proof}
Straightforward computations show that 
$$ \begin{aligned}
\frac{1}{D_0}\partial_1 \th(0) =  \int_{\partial \Om_0} |y|^{2-2n} d \sigma(y) - n \int_{\partial \Om_0} |y|^{-2n} d \sigma(y) ,
\end{aligned} $$
where we have let $D_0 = 2\frac{n^{\frac{n-4}{2}}(n-2)^{\frac{n-2}{2}}}{\omega_{n-1}}$ and where $\partial \Om_0 = \{1\} \times \R^{n-1}$. Simple changes of variable then yield
$$\begin{aligned}   
 \int_{\partial \Om_0} |y|^{2-2n} d \sigma(y) & = \frac{\omega_{n-2}}{2} I_{n-1}^{\frac{n-3}{2}} \quad \text{ and } \\
 \int_{\partial \Om_0} |y|^{-2n} d \sigma(y)& = \frac{\omega_{n-2}}{2} I_{n}^{\frac{n-3}{2}}
\end{aligned} $$ 
where $\omega_{n-2}$ is the area of the round sphere $\mathbb{S}^{n-2}$ and where we have let, for $p,q >0$, $p > q+1$, 
$$ I_p^q = \int_0^{+ \infty}\frac{r^q}{(1+r)^p} \, dr. $$
Classical induction formulae (see e.g. \cite{AubinYamabe}) show that $I_{n}^{\frac{n-3}{2}} = \frac12 I_{n-1}^{\frac{n-3}{2}}$. Combining these computations finally shows that 
$$\frac{1}{D_0}\partial_1 \th(0)  = \frac{\omega_{n-2}}{2} I_{n-1}^{\frac{n-3}{2}} \Big( 1 - \frac{n}{2} \Big) =  - \frac{n-2}{2}   \int_{\partial \Om_0} |y|^{2-2n} d \sigma(y)  < 0$$
which proves the Lemma.
\end{proof}

\bibliographystyle{amsplain}
\bibliography{biblio}

\end{document}